\newtheorem{thm}{Theorem}[section]
\newtheorem*{thm*}{Theorem}
\newtheorem{lemma}[thm]{Lemma}
\newtheorem{proposition}[thm]{Proposition}
\newtheorem*{proposition*}{Proposition}
\newtheorem{corollary}[thm]{Corollary}
\theoremstyle{definition}
\newtheorem{definition}[thm]{Definition}
\newtheorem{remark}[thm]{Remark}
\newtheorem{example}[thm]{Example}
\newtheorem{parg}[thm]{}
\newcommand{\ph}{\varphi}
\newcommand{\pr}{\mathbb{P}}
\newcommand{\Q}{\mathbb{Q}}
\newcommand{\R}{\mathbb{R}}
\newcommand{\Z}{\mathbb{Z}}
\newcommand{\C}{\mathbb{C}}
\newcommand{\N}{\mathcal{N}_1}
\newcommand{\M}{\mathcal{M}_X}
\newcommand{\NM}{\operatorname{ME}}
\newcommand{\Nu}{\mathcal{N}^1}
\newcommand{\Sing}{\operatorname{Sing}}
\newcommand{\NE}{\operatorname{NE}}
\newcommand{\Exc}{\operatorname{Exc}}
\newcommand{\Lo}{\operatorname{Locus}}
\newcommand{\codim}{\operatorname{codim}}
\newcommand{\dom}{\operatorname{dom}}
\newcommand{\Nef}{\operatorname{Nef}}
\newcommand{\Eff}{\operatorname{Eff}}
\newcommand{\Mov}{\operatorname{Mov}}
\newcommand{\Pic}{\operatorname{Pic}}
\newcommand{\Cox}{\operatorname{Cox}}
\newcommand{\w}{\widetilde}
\newlength{\Mheight}
\newlength{\cwidth}
\title{On the birational geometry of Fano 4-folds}
\author{Cinzia Casagrande}
\date{September 23, 2011}
\begin{document}
\maketitle
{\small\tableofcontents}
\section{Introduction}\label{intro}
After Mori and Mukai's classification of Fano $3$-folds with Picard
number $\rho\geq 2$ in the early 80's, it has become a classical subject to
study Fano manifolds via their contractions\footnote{A
  \emph{contraction} is a morphism with connected fibers onto a normal
projective variety.}, using Mori theory.  Indeed
the
Fano condition makes the situation quite special, because the Cone
and the Contraction Theorems hold for the whole cone of effective curves.

It has been conjectured by Hu and Keel \cite{hukeel}, and recently
proved by Birkar, 
Cascini, Hacon, and 
M{\parbox[b][\Mheight][t]{\cwidth}{c}}Kernan
\cite{BCHM}, that the special behaviour of Fano
manifolds with respect to Mori theory is even stronger: in fact, Fano manifolds
are \emph{Mori dream spaces}.

In particular, 
this implies that the classical point of view can be extended from
regular contractions to \emph{rational contractions}. 
If $X$ is a Mori dream space, a rational contraction of $X$ is a
 rational map $f\colon X\dasharrow Y$ which factors as a
finite sequence of flips, followed by a regular contraction. 
Equivalently, $f$ can be seen as a regular contraction of a
\emph{small $\Q$-factorial modification} of $X$, that is, a variety
related to $X$ by a sequence of flips.

In this paper we use properties of Mori dream spaces to study rational
contractions of a smooth Fano $4$-fold $X$. In particular,
we are interested in bounding the Picard number $\rho_X$
of $X$.

We recall that $\rho_X=b_2(X)$ is a topological invariant of Fano
$4$-folds, whose maximal value is not known.
By taking
products of Del Pezzo surfaces one gets examples with
$\rho\in\{2,\dotsc,18\}$, while all known examples of Fano $4$-folds
which are not products have $\rho\leq 6$.

Our main result is a bound on $\rho_X$ when $X$ has an elementary
rational contraction of fiber type, or more generally, a
\emph{quasi-elementary} rational contraction of fiber type. 
Let us explain  the
terminology: as in the regular case, a rational contraction $f\colon
X\dasharrow Y$ is of fiber type if $\dim Y<\dim X$, and it is
elementary if $\rho_X-\rho_Y=1$.

Quasi-elementary rational
contractions are a special
class of rational contractions of fiber type, 
which includes the elementary ones. 
They share many  useful properties of
the elementary case, for instance the target is again a Mori dream
space. If $f\colon X\to Y$ is a  contraction of fiber type,
then $f$ is quasi-elementary if every curve contracted by $f$ is
numerically equivalent to a one-cycle contained in a general fiber of
$f$. In the case of rational contractions, we give some equivalent
characterizations of being quasi-elementary,
see section~\ref{qe} for more details.

Quasi-elementary (regular) contractions of Fano manifolds have been
studied in  \cite{fanos};
 let us recall what is known in the $4$-dimensional case.
\begin{thm*}[\cite{fanos}, Cor.\ 1.2]
 Let $X$ be a smooth Fano $4$-fold.

\smallskip

\noindent If $X$ has an elementary contraction of fiber type, then $\rho_X\leq
11$, with equality only if $X\cong\pr^1\times\pr^1\times S$ or
$X\cong\mathbb{F}_1\times S$, where $S$ is a surface.

\smallskip

\noindent If $X$ has a  quasi-elementary contraction of fiber type, 
then $\rho_X\leq
18$, with equality only if $X$ is a product of surfaces.
\end{thm*}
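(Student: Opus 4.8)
The plan is to reduce the whole statement to the linear algebra of the push-forward $f_*\colon N_1(X)\to N_1(Y)$ combined with the classification of low-dimensional Fanos. First I would record the structural consequence of quasi-elementarity. Writing $\iota\colon F\hookrightarrow X$ for the inclusion of a general fiber, the classes of $f$-contracted curves span $\ker f_*$, and the defining property of a quasi-elementary contraction says precisely that each such class lies in $\iota_*N_1(F)$; since conversely $\iota_*N_1(F)\subseteq\ker f_*$, one gets $\ker f_*=\iota_*N_1(F)$. As $f_*$ is surjective (equivalently $f^*\colon N^1(Y)\to N^1(X)$ is injective), rank--nullity yields the fundamental inequality
$$\rho_X=\rho_Y+\dim\ker f_*=\rho_Y+\dim\iota_*N_1(F)\leq\rho_Y+\rho_F,$$
which is forced to be the equality $\rho_X=\rho_Y+1$ in the elementary case by definition. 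The two further inputs I would invoke are that the general fiber $F$ is a smooth Fano variety, since $-K_F=(-K_X)|_F$ is ample and $F$ is generically smooth, and that the target $Y$ is Fano as well (for elementary contractions this is classical; for quasi-elementary ones it belongs to the structure theory recalled from \cite{fanos}). The Fano-ness of $Y$ is exactly what lets me bound $\rho_Y$.

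With these in hand the bound is a short case analysis on $d:=\dim Y\in\{1,2,3\}$, using $\rho\leq 9$ for Del Pezzo surfaces, $\rho\leq 10$ for Fano $3$-folds, and $\rho_{\pr^1}=1$. In the elementary case $\rho_X=\rho_Y+1$: if $d=1$ then $Y\cong\pr^1$ and $\rho_X=2$; if $d=2$ then $Y$ is a Del Pezzo surface and $\rho_X\leq 10$; if $d=3$ then $Y$ is a Fano $3$-fold and $\rho_X\leq 11$. Hence $\rho_X\leq 11$, with equality forcing $d=3$, $\rho_Y=10$, and a general fiber $F\cong\pr^1$. In the quasi-elementary case I use $\rho_X\leq\rho_Y+\rho_F$ with $\dim F=4-d$: for $d=3$ this gives $10+1=11$, for $d=1$ it gives $1+10=11$, and for $d=2$ it gives $9+9=18$. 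Thus $\rho_X\leq 18$, with equality forcing $d=2$ and $\rho_F=\rho_Y=9$, so that both the general fiber and the base are Del Pezzo surfaces of maximal Picard number (degree one).

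It remains to pin down the equality cases, and this is where I expect the real work to lie. In both extremal situations the inequality $\rho_X\leq\rho_Y+\rho_F$ is an equality, which means $\iota_*\colon N_1(F)\to N_1(X)$ is injective, equivalently the restriction $N^1(X)\to N^1(F)$ is surjective: every numerical divisor class of a general fiber extends over $X$, so $f$ carries no monodromy on the N\'eron--Severi group of its fibers. I would combine this numerical triviality with the rigidity of the extremal fibers to show that $f$ is isotrivial and in fact splits. Concretely, each of the $\rho_F$ independent, globally defined fiber classes should produce a fiber-type elementary contraction of $X$, and analyzing the product of these contractions together with the positivity of $-K_X$ should exhibit $X$ as $F\times Y$ in the quasi-elementary case, giving a product of two degree-one Del Pezzo surfaces. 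In the elementary case, after identifying the maximal Fano $3$-fold as $Y\cong\pr^1\times S$ with $S$ a Del Pezzo surface of Picard number $9$, the same triviality forces the $\pr^1$-fibration $X\to Y$ to be a bundle that is constant in the $S$-direction and, over the remaining $\pr^1$, either trivial or pulled back from $\mathbb{F}_1\to\pr^1$; this yields exactly $X\cong\pr^1\times\pr^1\times S$ or $X\cong\mathbb{F}_1\times S$.

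The main obstacle is precisely this last passage, from the purely numerical equality to an honest product decomposition. The bound on $\rho_X$ is essentially formal once the inequality and the Fano-ness of $Y$ are in place, but ruling out twisted or non-isotrivial families at the extremal value, and then upgrading isotriviality to a global product, requires the rigidity of the relevant extremal Fanos and a careful use of the extremal rays in $\NE(X)$ coming from the extended fiber classes. I would expect the bulk of the effort to go into showing that the finitely many fiber-type contractions so produced assemble into an isomorphism onto the expected product.
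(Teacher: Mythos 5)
A preliminary remark: this paper does not actually prove the quoted theorem --- it is cited verbatim from \cite{fanos}, and what the paper proves are rational analogues (Prop.~\ref{S}, Th.~\ref{dim3}), explicitly by ``the same strategy as \cite{fanos}'', namely studying the elementary contractions of the target. Measured against that strategy, your opening reduction is correct and is indeed how the real proof starts: quasi-elementarity gives $\ker f_*=\N(F,X)$, hence $\rho_X\le\rho_Y+\rho_F$ (cf.\ Lemma~\ref{MFS} and Prop.~\ref{sabrina}), and the general fiber is a smooth Fano variety of dimension $4-\dim Y$. The genuine gap is your second input: the claim that the target $Y$ is Fano, which you call ``classical'' in the elementary case and part of the ``structure theory recalled from \cite{fanos}'' in the quasi-elementary case. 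It is neither. The classical theorem in this direction (Koll\'ar--Miyaoka--Mori) requires $f$ to be a \emph{smooth} morphism, and an elementary fiber-type contraction of a Fano $4$-fold is not one in general: conic bundles have discriminant divisors, and contractions onto $3$-folds can have isolated $2$-dimensional fibers, at whose images $Y$ is even singular --- so the smooth Fano $3$-fold bound $\rho\le 10$ could not be quoted in any case. What \cite{fanos} and this paper actually establish about targets (Cor.~\ref{target}) is only smoothness when $\dim Y=2$ and isolated canonical, factorial singularities when $\dim Y=3$; no positivity of $-K_Y$ comes for free. That positivity is exactly what the proofs labor to obtain, ray by ray: one takes each elementary contraction $g$ of $Y$, pulls it back to $X$, and uses the classification of fibers of $K$-negative contractions of smooth $4$-folds (Andreatta--Wi\'sniewski, Kachi, Takagi) to control $K_Y\cdot\NE(g)$. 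In this paper's rational setting the outcome is only ``$-K_S$ nef'' (Prop.~\ref{S}), or ``$-K_Y$ nef and big'' with Prokhorov's bound $(-K_{Y_r})^3\le 72$ standing in for the smooth classification (proof of Th.~\ref{dim3}); and Lemma~\ref{small} shows that small rays of the target are $K$-trivial, i.e.\ in this circle of problems the target genuinely need not be Fano. Your proposal assumes away precisely this, which is the hard core of the theorem: once ``$Y$ is Fano'' is granted, nothing of substance remains in the bound.

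A secondary gap, which you yourself acknowledge: the equality statements ($\rho_X=18$ forces a product of surfaces; $\rho_X=11$ forces $\pr^1\times\pr^1\times S$ or $\mathbb{F}_1\times S$) are left as a heuristic --- ``numerical triviality plus rigidity should give a product''. The implication from the numerical equality $\rho_X=\rho_Y+\rho_F$ to an actual splitting $X\cong F\times Y$ is not formal: producing elementary fiber-type contractions from the extended fiber classes, proving the fibration is a bundle, and assembling the resulting maps into an isomorphism with a product is a substantial portion of the proof in \cite{fanos}, not a corollary of injectivity of $i_*\colon\N(F)\to\N(X)$. As written, the proposal establishes neither the bound (because of the Fano-ness gap) nor the equality classification.
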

\noindent Here is the result in the case of a rational contraction.
\begin{thm}\label{main}
Let $X$ be a smooth Fano $4$-fold. 

\smallskip

\noindent If $X$ has an elementary rational contraction of fiber type, then
$\rho_X\leq 11$. 

\smallskip

\noindent
If $X$ has a quasi-elementary rational contraction of fiber type, 
which is not regular,
then
$\rho_X\leq 17$.
\end{thm}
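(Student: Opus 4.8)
The plan is to reduce the rational case to the regular case already established in the quoted theorem from \cite{fanos}, by passing to a small $\Q$-factorial modification. Concretely, given a quasi-elementary rational contraction $f\colon X\dasharrow Y$ of fiber type, by definition $f$ factors as a sequence of flips $\psi\colon X\dasharrow X'$ followed by a genuine contraction $g\colon X'\to Y$. Since $X'$ is a small $\Q$-factorial modification of $X$, it has the same Picard number ($\rho_{X'}=\rho_X$) and the same Mori dream space structure, and $g$ is a regular quasi-elementary contraction of fiber type on $X'$. The difficulty, and the reason the bounds differ from the regular Theorem, is that $X'$ is no longer smooth — it is only a $\Q$-factorial terminal (hence klt) variety which is ``weak Fano'' rather than Fano — so one cannot directly invoke the smooth classification. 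Thus the first real step is to verify which results from \cite{fanos} survive on such an $X'$, isolating exactly the smoothness hypotheses used in deriving $\rho\leq 11$ and $\rho\leq 18$.

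First I would treat the elementary case. Here $\rho_X-\rho_Y=1$, and after flipping we get an elementary regular contraction $g\colon X'\to Y$ of fiber type with $\rho_{X'}=\rho_X$. The strategy is to show that the arguments producing the bound $\rho\leq 11$ depend only on properties preserved under flips: the numerical structure of the cone of curves, the fact that $\rho_Y=\rho_X-1$, and estimates on the Picard number of the general fiber $F$ of $g$ (a lower-dimensional Fano-type variety). Because the general fiber of an elementary fiber-type contraction lies in the smooth locus when $f$ is an \emph{elementary rational} contraction that is itself regular, and because flips are isomorphisms in codimension one, the estimates on $\rho_F$ and on $\rho_Y$ should carry over verbatim, yielding $\rho_X\leq 11$ with no loss. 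I expect the equality characterization to fail to survive (a flip can genuinely occur even at the boundary), which is precisely why the statement drops the ``equality only if'' clause present in the regular theorem.

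For the quasi-elementary case the target improvement from $18$ to $17$ is the subtle point. Since $f$ is assumed \emph{not regular}, at least one genuine flip occurs in the factorization, so $X$ itself is not $\Q$-factorially isomorphic to its modification $X'$; this nontriviality is what rules out the extremal value $18$. The plan is: run the quasi-elementary analysis of \cite{fanos} on $X'\to Y$ to get $\rho_X=\rho_{X'}\leq 18$, and then show that equality $\rho_X=18$ forces $X$ to be a product of surfaces. But a product of four-dimensional Fano type that is a product of surfaces is smooth and has no small $\Q$-factorial modifications other than itself (its movable cone and nef cone coincide on each factor), so any quasi-elementary rational contraction of it would be regular — contradicting the hypothesis that $f$ is \emph{not} regular. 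Hence equality is excluded and $\rho_X\leq 17$.

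The main obstacle, I expect, is the careful bookkeeping of which singularities $X'$ acquires and confirming that the numerical and geometric inequalities of \cite{fanos} remain valid in the mildly singular, $\Q$-factorial setting rather than the smooth one. In particular, one must check that the bounds on $\rho_Y$ and on the Picard number of a general fiber of $g$ were derived using only the Mori dream space structure and the Cone/Contraction theorems for $\Q$-factorial terminal varieties (which hold by \cite{BCHM}), and not any argument specific to smoothness such as the classification of extremal contractions of smooth fourfolds. If some step in \cite{fanos} genuinely uses smoothness of $X$, I would need to supply a substitute valid on $X'$, or else restrict attention to the locus where $X'$ is smooth (the complement of the flipping loci, which has codimension at least two). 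Establishing the sharp ``$17$ not $18$'' boundary via the product-rigidity argument is where the real content beyond \cite{fanos} lies.
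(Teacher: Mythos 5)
Your reduction to the SQM $X'$ is where the argument breaks down, and for a different reason than the one you flag. By Kawamata's theorem on small contractions of smooth $4$-folds (Th.~\ref{flip}, Rem.~\ref{SQM}), every SQM of a smooth Fano $4$-fold is again \emph{smooth}, so singularities of $X'$ are not the obstacle. The obstacle is that $X'$ is no longer \emph{Fano}: $-K_{X'}$ is not nef, and $X'$ contains exceptional lines, i.e.\ curves of anticanonical degree $-1$. The bounds of \cite{fanos} that you want to ``carry over verbatim'' are not formal consequences of the Mori dream space structure plus $K$-negativity of $g\colon X'\to Y$; their proofs use the Fano hypothesis on the \emph{source} essentially, to show that the \emph{target} of a quasi-elementary contraction is again Fano --- a del Pezzo surface when $\dim Y=2$, an (almost) Fano $3$-fold when $\dim Y=3$ --- whence $\rho_Y\leq 9$, resp.\ $\rho_Y\leq 10$. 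For the non-Fano $X'$ this control on $Y$ is exactly what is missing, and supplying it is the bulk of the paper: the classification of non-movable prime divisors (Th.~\ref{secondo}, i.e.\ Th.~\ref{effective}) is used in Prop.~\ref{S} to prove that $-K_S$ is nef (and ample if $f$ is not elementary) for a surface target, and in Lemmas~\ref{small}, \ref{divisorial} and Th.~\ref{dim3} to prove that a $3$-fold target is almost Fano, after which Prokhorov's bound $(-K)^3\leq 72$ from \cite{prok} enters. None of this transports from \cite{fanos}; in particular the case $\dim Y=3$ of the elementary statement, which your sketch dismisses in a sentence (and where the fiber plays no role, since $\rho_X=\rho_Y+1$), has the longest proof in the paper.

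Your mechanism for the improvement from $18$ to $17$ is also not the one that works. You propose to get $\rho_{X'}\leq 18$ with equality only for products of surfaces, and then exclude products because they admit no nontrivial SQM. Besides resting on the unproved transfer of \cite{fanos} to $X'$, this would at best exclude the single value $\rho_X=18$. The actual mechanism (Cor.~\ref{pluto}) is a bound on the \emph{fiber}: since $f$ is not regular, $X'$ contains an exceptional line, and by Rem.~\ref{SQM}~(1) such a line cannot meet any curve of anticanonical degree $1$; hence the general fiber $F$ of $g$ cannot be covered by curves of anticanonical degree $1$, which rules out the del Pezzo surface of degree $1$ and gives $\rho_F\leq 8$ rather than $9$. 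Combined with $\rho_S\leq 9$ (Prop.~\ref{S}, which again needs Th.~\ref{secondo}) this yields $\rho_X\leq\rho_S+\rho_F\leq 17$. So both quantitative improvements in the theorem come from new geometric input --- the structure of non-movable prime divisors and the interaction of exceptional lines with low-degree curves --- not from transporting the regular-case results across the flips.
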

The strategy for the proof of Th.~\ref{main}
is similar to the one used in \cite{fanos},
via the study of elementary contractions of the target of the rational
contraction of fiber type. We
systematically use properties of Mori dream spaces, and a key
ingredient is a description of non-movable prime divisors in $X$ when
$\rho_X\geq 6$. 
More precisely, we show the following.
\begin{thm}\label{secondo}
Let $X$ be a smooth Fano $4$-fold with $\rho_X\geq 6$, and
 $D\subset X$ a non-movable prime divisor. Then either $D$  is the
locus of an extremal ray of type $(3,2)$,\footnote{See on
  p.~\pageref{oscar} for the terminology.} or
there exists a diagram: 
$$\xymatrix{
X\ar@{-->}[r]&{\widetilde{X}}\ar[d]^{f}\\
& Y}$$
where $X\dasharrow\w{X}$ is a sequence of at least $\rho_X-4$ flips,
$f$ is an elementary divisorial
 contraction with exceptional locus the transform $\w{D}$ of $D$, and
 one of the following
holds:
\begin{enumerate}[$\bullet$]
\item 
$Y$ is smooth and Fano, $f$ is
the blow-up of a smooth curve, and $\w{D}$ is a
$\pr^2$-bundle over a smooth curve;
\item 
$Y$ is smooth and Fano, $f$ is
the blow-up of a point, and $\w{D}\cong\pr^3$;
\item 
$\w{D}$ is isomorphic to a
quadric, $f(\w{D})$ is a factorial and terminal singular point, 
and $Y$ is Fano.
\end{enumerate}
\end{thm}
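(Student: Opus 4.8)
The plan is to analyze a non-movable prime divisor $D\subset X$ by running a minimal model program that makes $D$ contractible. Since $X$ is a Mori dream space, $D$ non-movable means the class $[D]$ lies in the effective cone but not in the movable cone $\Mov(X)$; equivalently, the section ring associated to $D$ has a base locus containing $D$. The strategy is to pass through small $\Q$-factorial modifications (flips) of $X$ so as to reach a model $\w{X}$ on which the transform $\w{D}$ becomes the exceptional locus of an elementary divisorial contraction $f\colon\w{X}\to Y$. Concretely, I would choose a movable divisor class and run a $D$-MMP: because $X$ is a Fano Mori dream space, the relevant cones are rational polyhedral, so after finitely many flips one lands on a birational contraction that contracts $\w{D}$. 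The count ``at least $\rho_X-4$ flips'' should come from tracking how the Picard number and the dimension of the relevant face of the effective cone change along the program, using that the final divisorial contraction $f$ drops $\rho$ by exactly one while the classification of contractions on a $4$-fold constrains the remaining steps.

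First I would set up the $\Q$-factorial model $\w{X}$ and the contraction $f$, then apply the structure theory of extremal contractions of smooth (or mildly singular, $\Q$-factorial terminal) $4$-folds. The key classification input is the description of divisorial extremal rays in dimension four: an elementary divisorial contraction is either a blow-up of a smooth subvariety (curve or point) of a smooth variety, or a contraction to a point with controlled singularities. The three alternatives in the statement correspond exactly to the possibilities for the exceptional divisor $\w{D}$ under such a contraction: a $\pr^2$-bundle over a curve (blow-up of a smooth curve), $\pr^3$ (blow-up of a point), or a quadric mapping to an isolated factorial terminal singularity. I would invoke the fact, available for Fano-type targets, that $Y$ inherits the Fano (or at least Mori dream) property, and that the ``type $(3,2)$'' excluded case is precisely the divisorial contraction to a curve that does \emph{not} fit the blow-up description, hence is peeled off as the separate alternative at the start of the statement.

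The main obstacle will be controlling the geometry of $\w{D}$ precisely enough to pin down the three cases, and in particular showing that $Y$ is smooth (in the first two cases) and Fano. Establishing smoothness requires that $f$ be a \emph{smooth} blow-up, which in turn needs a careful analysis of the normal bundle of $\w{D}$ and the local structure of $f$; here I expect to rely on Mori-theoretic results on length of extremal rays and on the fact that a smooth Fano $4$-fold has terminal (indeed smooth) models along the program. Proving $Y$ is Fano is delicate because flips and divisorial contractions can destroy the Fano condition; the argument should exploit that $X$ is Fano with $\rho_X\geq 6$ to force enough positivity to survive to $Y$, presumably via an anticanonical positivity estimate along the contracted face. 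The hypothesis $\rho_X\geq 6$ is what makes the numerical bookkeeping work and excludes low-Picard-number pathologies; I would expect the bulk of the technical effort to go into verifying that, after the prescribed flips, the single divisorial contraction $f$ is forced into exactly one of the three listed forms and that no other extremal behaviour (such as a further fiber-type or flipping contraction of $\w{D}$) can occur.
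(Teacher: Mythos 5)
Your skeleton --- run a Mori program for $D$ until the transform $\w{D}$ becomes the exceptional divisor of an elementary divisorial contraction $f\colon\w{X}\to Y$, then classify $f$ --- is the same as the paper's, but the two mechanisms that carry all the weight are missing or wrong, so as it stands the proposal does not go through. First, the flip count. Flips preserve the Picard number, so ``tracking how the Picard number changes along the program'' yields nothing, and no face of $\Eff(X)$ enters this count. The paper's argument is of a different nature: one first needs that a Fano $4$-fold with $\rho_X\geq 6$ has \emph{no} elementary divisorial contraction of type $(3,0)$ or $(3,1)$ (quoted from \cite[Cor.~1.3]{31}), so that if $f$ is not of type $(3,2)$ the program must contain at least one flip; then $X$ has a small elementary contraction, hence $c_X\leq 2$ (Cor.~\ref{eco}, resting on Th.~\ref{cod}), which gives $\dim\N(D,X)\geq\rho_X-2$. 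At the other end, $f$ contracts $\w{D}$ to a curve or a point, so $\dim\N(\w{D},\w{X})\leq 2$. Since a single flip can drop $\dim\N(\cdot\,,\cdot)$ of the transform of $D$ by at most one (Rem.~\ref{divisors}, Cor.~\ref{dimension}), the number of flips is at least $(\rho_X-2)-2=\rho_X-4$. None of this bookkeeping --- the invariant $c_X$, the input from \cite{31}, the behaviour of $\N(D,X)$ under flips --- can be replaced by Picard-number tracking.

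Second, the classification step. There is no off-the-shelf theorem saying that an elementary divisorial contraction of a smooth $4$-fold is a smooth blow-up of a point or curve, or a quadric contracted to a factorial terminal point: Takagi's list for type $(3,1)$ and Fujita's theory of (possibly singular, even non-normal) Del Pezzo varieties for type $(3,0)$ contain many further cases, and the bulk of the paper's proof consists in excluding them. The mechanism you are missing is the interplay with the exceptional lines created by the flips: since at least one flip occurred, $\w{X}$ contains exceptional lines $l_i$, one has $\w{D}\cdot l_i>0$, and by Rem.~\ref{SQM}~(1) a curve of anticanonical degree $1$ cannot meet an exceptional line (a curve meeting $s$ of them has degree $\geq 1+s$). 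Hence $\w{D}$ cannot be covered by curves of anticanonical degree $1$, and this is exactly what rules out every other entry of the classifications (several pages, going case by case through Fujita's lists, including the non-normal ones). The same positivity combined with the explicit formula $f^*(-K_Y)=-K_{\w{X}}+a\w{D}$ is what proves that $Y$ is Fano; it is not ``inherited'' from $X$, as you yourself suspect. Finally, a smaller but real error: type $(3,2)$ means the exceptional divisor is contracted to a \emph{surface}, not to a curve, and it is peeled off as a separate alternative because in that case no flips are needed ($X=\w{X}$ and $D$ is itself the locus of an extremal ray of $X$), not because it ``does not fit the blow-up description'' --- the blow-up-of-a-curve case in the statement is type $(3,1)$.
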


We finally apply these results to Fano $4$-folds $X$ with $c_X=1$ or
$c_X=2$. 
Let us recall from \cite{codim}
that $c_X$ is an invariant of a Fano manifold $X$, defined as
follows. For any prime divisor $D\subset X$, we consider the
restriction map $H^2(X,\R)\to H^2(D,\R)$, and we set:
$$c_X:=\max\left\{\dim\ker\left(H^2(X,\R)\to H^2(D,\R)\right)
\,|\,D\text{ is a prime divisor in
}X\right\}\in\{0,\dotsc,\rho_X-1\}.$$ 
By \cite[Th.~3.3]{codim} we have $c_X\leq 8$ for any smooth
Fano manifold $X$,
 and if $c_X\geq 4$, then $X$ is a
product of a Del Pezzo surface with another Fano manifold.

In particular, in dimension $4$, we have $\rho_X\leq 18$ as soon as
$c_X\geq 4$. Moreover when $c_X=3$ we know after \cite{codim}
that $\rho_X\leq 8$ (see Th.~\ref{cod}). Therefore in order to study
Fano $4$-folds with large Picard number, we can reduce to the case
$c_X\leq 2$; this is used throughout the paper. 
In the last section we show the following.
\begin{thm}\label{terzo}
Let $X$ be a smooth Fano $4$-fold with
$c_X\in\{1,2\}$. Then either $\rho_X\leq 12$, or $X$ is the blow-up of
another Fano $4$-fold along a smooth surface.
\end{thm}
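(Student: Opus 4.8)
The plan is to combine Theorem~\ref{main} and Theorem~\ref{secondo} with the structural results on $c_X$ recalled in the excerpt. Since we assume $c_X\in\{1,2\}$, we are exactly in the range where the author says the theory applies; the goal is to show that a large Picard number forces $X$ to be a blow-up of a Fano $4$-fold along a smooth surface.

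First I would dispose of the case where $X$ has a rational contraction of fiber type. If $X$ admits an elementary rational contraction of fiber type, then by Theorem~\ref{main} we get $\rho_X\leq 11\leq 12$, so there is nothing to prove. Similarly, if $X$ has a quasi-elementary rational contraction of fiber type that is not regular, then $\rho_X\leq 17$; but here I expect one must use the hypothesis $c_X\leq 2$ together with the regular-case bound (the recalled Theorem from \cite{fanos}, giving $\rho_X\leq 11$ in the elementary regular case) to sharpen $17$ down to $\leq 12$. The point is that $c_X\leq 2$ is genuinely restrictive: by the quoted results from \cite{codim}, $c_X\geq 4$ gives a product structure (hence one of the large-$\rho$ examples) and $c_X=3$ gives $\rho_X\leq 8$, so the interesting large-$\rho$ Fano $4$-folds all have $c_X\leq 2$, and in that regime the fiber-type rational contractions should be controlled more tightly. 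So after this reduction one may assume $X$ has \emph{no} rational contraction of fiber type at all.

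The heart of the argument is then the birational (non–fiber-type) case, where I would invoke Theorem~\ref{secondo}. Assuming $\rho_X\geq 13>6$, every non-movable prime divisor $D$ is either the locus of a $(3,2)$ extremal ray, or fits into the diagram of Theorem~\ref{secondo} with an elementary divisorial contraction $f$ on a small $\Q$-factorial modification $\w X$. The three alternatives for $f$ are: a blow-up of a smooth curve (with $Y$ smooth Fano), a blow-up of a point (with $Y$ smooth Fano and $\w D\cong\pr^3$), or a contraction to a factorial terminal point with $\w D$ a quadric (and $Y$ Fano). The key step is to produce, under the hypothesis $c_X\leq 2$ and $\rho_X$ large, a non-movable divisor whose associated contraction is \emph{genuinely} the blow-up of a smooth surface on $X$ itself, not merely on a flipped model. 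The technical work is to show that when $c_X\leq 2$, the flips in the sequence $X\dasharrow\w X$ must be trivial (so that $\w X\cong X$) and the exceptional divisor must be of the blow-up-of-a-surface type; this is where one has to rule out the point-blow-up, the quadric, and the $(3,2)$-ray alternatives using the numerical constraint imposed by $c_X$. I expect the main obstacle here to be precisely the analysis of $c_X$ against the list in Theorem~\ref{secondo}: one must translate the bound $\dim\ker(H^2(X,\R)\to H^2(D,\R))\leq 2$ into numerical information about the extremal contractions of $X$ and of the $\w X$ appearing in the theorem, and argue that large $\rho_X$ together with $c_X\leq 2$ forces the divisorial contraction of a surface.

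Finally I would assemble the dichotomy. Either $X$ has a (quasi-elementary) rational contraction of fiber type, in which case the first paragraph gives $\rho_X\leq 12$; or $X$ has none, in which case the Cone Theorem guarantees the existence of some extremal contraction, which must be birational, and the analysis via Theorem~\ref{secondo} above shows that if $\rho_X\geq 13$ then $X$ is the blow-up of a Fano $4$-fold along a smooth surface. Combining the two cases yields the stated alternative: either $\rho_X\leq 12$, or $X$ is such a blow-up. The delicate bookkeeping throughout is ensuring the constant $12$ is exactly what drops out of the fiber-type bounds once $c_X\leq 2$ is imposed, and that the threshold $\rho_X\geq 13$ is compatible with the hypothesis $\rho_X\geq 6$ needed to apply Theorem~\ref{secondo}.
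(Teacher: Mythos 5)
Your proposal has two genuine gaps, and both stem from the same misreading of how the hypothesis $c_X\in\{1,2\}$ enters. First, the fiber-type reduction fails: Theorem~\ref{main} gives only $\rho_X\leq 17$ for a quasi-elementary non-regular rational contraction of fiber type, \cite{fanos} gives $17$ (or $18$, only for products) in the regular quasi-elementary case, and nothing in the paper covers fiber-type rational contractions that are not quasi-elementary; no statement anywhere sharpens $17$ to $12$ under $c_X\leq 2$, and you offer no argument for such a sharpening. In fact the constant $12$ in Theorem~\ref{terzo} does not come from fiber-type bounds on $X$ at all (see below). Second, in the birational case your plan is to use $c_X\leq 2$ to rule out the curve-blow-up, point-blow-up and quadric alternatives of Theorem~\ref{secondo} and reduce to the type $(3,2)$ case. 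This cannot work: those three cases are not excluded by $c_X\leq 2$ --- on the contrary, by the remark following Theorem~\ref{effective} each of them \emph{implies} $c_X\leq 2$. And even if you could reduce to the $(3,2)$ alternative, that case asserts only that $D$ is the locus of an extremal ray of type $(3,2)$; it gives neither smoothness of the blown-up surface nor smoothness or Fano-ness of the target, so it does not yield the conclusion of Theorem~\ref{terzo}.

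The paper's actual proof (Prop.~\ref{due} and Prop.~\ref{uno}) uses $c_X\in\{1,2\}$ in the opposite direction, as an \emph{existence} hypothesis: it supplies a prime divisor $D$ with $\codim\N(D,X)=c_X\geq 1$, to which the structure results of \cite{codim} (Prop.~2.5 and Lemma~2.8 there) apply. These produce prime divisors $E$ that are smooth $\pr^1$-bundles with $E\cdot F=-1$, $D\cdot F>0$ and $[F]\notin\N(D,X)$; Rem.~\ref{allafine} (which is where $c_X\leq 2$ and $\rho_X\geq 6$ are used) shows that $\R_{\geq 0}[F]$ is the unique $E$-negative extremal ray and is of type $(3,2)$, and the extra numerical conditions upgrade it to type $(3,2)^{sm}$, i.e.\ $X\to X_1$ is the blow-up of a smooth surface with $X_1$ Fano (via nefness of $-K_X+E$). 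The dichotomy then comes from the Mori program for $-D$: if after the blow-down $X\to X_1$ the program consists only of flips, then $X_1$ has an elementary rational contraction of fiber type onto a $3$-fold, so $\rho_{X_1}\leq 11$ by Theorem~\ref{dim3} and $\rho_X=\rho_{X_1}+1\leq 12$; otherwise a second divisorial contraction is extracted from the program and one lands in the blow-up alternative. So the bound $12$ is $11+1$, obtained from Theorem~\ref{dim3} applied to the blow-down $X_1$, not from any bound on fiber-type contractions of $X$ itself --- this is the key idea your outline is missing.
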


\medskip

\noindent{\bf Outline of the paper.}
Section~\ref{mds} concerns
Mori dream spaces. In section~\ref{survey} we recall from \cite{hukeel}
the main geometrical properties of Mori dream spaces;
then in section~\ref{qe}
we define quasi-elementary rational contractions and explain some of
their properties.

In section~\ref{leonardo} we move to Fano $4$-folds. We first give in
section~\ref{mattia} some elementary properties of small $\Q$-factorial
modifications and rational contractions of Fano $4$-folds. Then in section~\ref{pn}
we recall some results needed from \cite{codim}, and study the
implications
 on prime divisors in a small $\Q$-factorial
modification of a Fano $4$-fold. Finally in section~\ref{nm} 
we show Th.~\ref{secondo}
on non-movable prime divisors.

In section \ref{rcft}  we show Th.~\ref{main}. We study first  the
case where the target is a surface in section~\ref{surf}, and then the case where
the target has dimension $3$ in section~\ref{3folds} (the case where the target is a curve is easier and is treated in section~\ref{mattia}).

Finally in section \ref{ultima} we show Th.~\ref{terzo}.

\bigskip

\noindent{\bf Acknowledgements.} Part of this paper has been written
during a stay at the  Ludwig-Maximilians University in Munich, in
spring 2010. I am grateful to Professor Andreas Rosenschon and to the
 Mathematisches Institut for the kind hospitality. I also thank the referee for some useful remarks.

\bigskip

\noindent{\bf Notation and terminology}

\smallskip

\noindent We work over the field of complex numbers.\\
A \emph{manifold} is a smooth algebraic variety.\\
A \emph{divisor} is a Weil divisor.\\
If $f\colon X\dasharrow Y$ is a rational map, $\dom(f)$ is the largest
open subset of $X$ where $f$ is regular.

\medskip

\noindent Let $X$ be a normal  projective variety. \\
A \emph{contraction} 
of $X$ is a morphism 
with connected fibers $f\colon X\to Y$
onto a normal projective variety. We will sometimes 
consider the case where $X$ and
$Y$ are quasi-projective and $f$ is a projective morphism; in this case we 
call $f$ a \emph{local contraction}. \\
$\mathcal{N}^1(X)$ (respectively $\mathcal{N}_1(X)$) 
is the $\R$-vector space of Cartier divisors (respectively one-cycles)
with
real coefficients, modulo numerical equivalence.\\
$\Nef(X)\subset\Nu(X)$ is the cone of nef classes.\\
$\Eff(X)\subset\Nu(X)$ is the convex cone generated by classes of
effective divisors, and $\overline{\Eff}(X)$ is its closure.

\medskip

\noindent Let $X$ be a normal and  $\Q$-factorial projective variety.\\
The \emph{anticanonical degree} of a curve $C\subset X$ is $\,-K_X\cdot C$.\\
For any closed subset $Z$ of $X$, $\N(Z,X):=i_*(\N(Z))\subseteq\N(X)$,
where $i\colon Z\hookrightarrow X$ is the inclusion.\\
$[D]$ is the numerical equivalence class in $\mathcal{N}^1(X)$ 
 of a
divisor $D$ in $X$, and similarly $[C]\in\N(X)$ for a curve $C\subset X$.\\ 
$\equiv$ stands for numerical equivalence.\\
For any subset $H\subseteq \N(X)$, $H^{\perp}:=
\{\gamma\in\Nu(X)\,|\,h\cdot\gamma=0\text{ for every }h\in H\}$, and
similarly if $H\subseteq \Nu(X)$.
For any divisor $D$ in $X$,
$D^{\perp}:=[D]^{\perp}\subseteq \N(X)$.\\
A divisor $D$ in $X$ is \emph{movable} if its stable base locus has
codimension at least $2$.
$\Mov(X)\subset\Nu(X)$ is the convex cone generated by classes of
movable divisors.\\
$\NE(X)\subset\N(X)$ is the convex cone generated by classes of
effective curves, and $\overline{\NE}(X)$ is its closure.\\
$\NM(X)\subset\N(X)$ is the cone dual to
$\overline{\Eff}(X)\subset\Nu(X)$. \\
Let $f\colon X\to Y$ be a contraction.
The exceptional locus $\Exc(f)$ is the set of points of $X$
where $f$ is not an isomorphism. 
If $D$ is a divisor in $X$, we say that $f$ is
\emph{$D$-positive} (respectively \emph{$D$-negative}) 
if $D\cdot C>0$ (respectively
$D\cdot C<0$) for every curve $C\subset X$ such that
$f(C)=\{pt\}$. When $D=K_X$, we just say \emph{$K$-positive} (or 
\emph{$K$-negative}).

We consider the push-forward of one-cycles $f_*\colon\N(X)\to\N(Y)$, and 
set $\NE(f):=\overline{\NE}(X)\cap\ker f_*$. We also say that $f$ is the
contraction of $\NE(f)$.

The contraction $f$ is \emph{elementary} if $\rho_X-\rho_Y=1$. In this
case we say that \emph{$f$ is of type $(a,b)$} if $\dim\Exc(f)=a$ and
$\dim f(\Exc(f))=b$.

We will use greek letters $\sigma,\tau,\eta,$ etc.\ to denote convex
polyhedral cones and their faces in $\N(X)$ or $\Nu(X)$.

If $\sigma\subseteq\N(X)$
  is a convex polyhedral cone and $\sigma^{\vee}\subseteq\Nu(X)$ its
  dual cone, there is a   
natural bijection between the faces of $\sigma$ and those of
$\sigma^{\vee}$, given by $\tau\mapsto \tau^{\star}:=\sigma^{\vee}\cap
\tau^{\perp}$ for every face $\tau$ of $\sigma$.

An \emph{extremal ray} of $X$ is a one-dimensional
face of $\overline{\NE}(X)$.\label{oscar}

Consider an elementary contraction $f\colon X\to Y$ and the extremal
ray $\sigma:=\NE(f)$. We say that $\sigma$ is birational, divisorial,
 small, or of type $(a,b)$,
if $f$ is. We set $\Lo(\sigma):=\Exc(f)$, namely $\Lo(\sigma)$ is the
union of the curves whose class belongs to $\sigma$. If $D$ is a
divisor in $X$, we say that
$D\cdot\sigma>0$ if $D\cdot C>0$ for a curve $C$ with $[C]\in\sigma$,
equivalently if $f$ is $D$-positive; similarly for $D\cdot\sigma=0$ and
$D\cdot\sigma<0$.

Suppose that $f\colon X\to Y$ is a small elementary
contraction, and let $D$ be a divisor in $X$ such that $f$ is
$D$-negative. 
The \emph{flip of $f$} is a birational map $g\colon X\dasharrow\w{X}$
which fits into a commutative diagram:
$$\xymatrix{X\ar@{-->}[rr]^g\ar[dr]_f &&{\w{X}}\ar[dl]^{\w{f}}\\
&Y&
}$$
where $\w{X}$ is a normal and $\Q$-factorial projective variety, $g$
is an isomorphism in codimension one, and $\w{f}$ is a
$\w{D}$-positive, small elementary contraction ($\w{D}$ the transform
of $D$ in $\w{X}$). If the flip exists, it
is unique and does not depend on $D$, see \cite[Cor.~6.4  and
  Def.~6.5]{kollarmori}. We also say that $g$ is the flip of the
small extremal ray $\NE(f)$, and that $g$ is a \emph{$D$-negative
flip}. Similarly, if $B$ is a divisor on $X$ such that $f$ is
$B$-positive, we say that $g$ is a $B$-positive flip. Finally, 
 when $D=K_X$, we just say $K$-positive or $K$-negative.

Suppose that $X$ is a projective $4$-fold 
and that $f\colon X\to Y$ is an elementary
contraction. We say that $f$ is of type $(3,2)^{sm}$ if it is
birational
 and every fiber has dimension at most $1$, equivalently if $Y$
is smooth and $f$ is the blow-up of a smooth surface 
(see Th.~\ref{smallfibers}).
\section{Mori dream spaces}\label{mds}
\subsection{A brief survey}\label{survey}
In this section we recall from \cite{hukeel} the definition and 
the main geometrical properties of Mori dream spaces. It is meant as
a quick introduction, and contains no new results; we provide proofs
of some elementary properties for which we could not find an easy
reference.
\begin{definition}
Let $X$ be a normal and $\Q$-factorial projective variety.
A {\bf small $\Q$-factorial modification (SQM)} of $X$ is a normal and
$\Q$-factorial projective variety $\w{X}$, together with a
birational map $f\colon X\dasharrow \w{X}$ which is an isomorphism
in codimension $1$.
\end{definition}
\noindent Flips are examples of SQMs.
\begin{definition}[\cite{hukeel}, Def.~1.10]
Let $X$ be 
a normal and $\Q$-factorial projective variety, with 
finitely generated Picard group. We say that $X$ is a {\bf Mori dream
  space} if there are a finite number of SQMs $f_j\colon X\dasharrow
X_j$ such that:
\begin{enumerate}[$(i)$]
\item for every $j$, $\Nef(X_j)$ is a polyhedral cone,
generated by the classes of finitely many semiample divisors;
\item $\Mov(X)=\bigcup_j f_j^*(\Nef(X_j))$.
\end{enumerate}
\end{definition}
Notice that if $X$ is a normal and $\Q$-factorial projective
variety having a SQM $\w{X}$ which is a Mori dream space, then $X$
itself is a Mori dream space.

Let $X$ be a Mori dream space.
We consider the following cones in $\Nu(X)$:
$$\Nef(X)\subseteq\Mov(X)\subseteq\Eff(X).$$
 All three 
are closed and 
polyhedral (see \cite[Prop.~1.11(2)]{hukeel}), 
and have dimension\footnote{The dimension of a cone in
  $\R^m$ is the dimension of its linear span.} $\rho_X$.
By condition $(ii)$, one of the SQMs $f_j$ must be the
identity of $X$, and by $(i)$
 $\Nef(X)$ is generated by the classes of finitely many semiample
divisors.  
In particular
this implies that the association 
$$\left(f\colon X\to Y\right)\ \longmapsto\ f^*\left(\Nef(Y)\right)$$
yields a bijection between the set of
contractions of $X$ and the set of faces of $\Nef(X)$.
\begin{definition}\label{ratcontr}
Let $X$ be a Mori dream space. A {\bf rational contraction} of $X$ is
a rational map $f\colon X\dasharrow Y$ which factors as $X\dasharrow\w{X}\to
Y$, where $X\dasharrow\w{X}$ is a SQM, and $\w{X}\to Y$ a (regular)
contraction.
\end{definition}
\noindent (In \cite{hukeel} the terminology ``contracting 
rational map'' is also used.) Let us notice that the definition 
\cite[Def.~1.1]{hukeel} is more general, because $X$ is just assumed
to be a
normal projective variety; when $X$ is a Mori dream space, the two
notions coincide, by \cite[Prop.~1.11]{hukeel}.

Every SQM of $X$ factors as a finite sequence of flips (see
\cite[Prop.~1.11]{hukeel}), therefore a
rational contraction can equivalently be described as a rational map
which factors as a finite sequence of flips followed by a contraction.
\begin{remark}\label{contracting}
Let $X$ be a Mori dream space, $Y$ a normal projective variety, and
$f\colon X\dasharrow Y$ a dominant rational map with connected
fibers.\footnote{Namely, a resolution of $f$ has connected
fibers; this does not depend on the resolution, see
\cite[Def.~1.0]{hukeel}.}
If there exist open subsets $U\subseteq X$
and $V\subseteq Y$ such that $\codim(Y\smallsetminus V)\geq 2$ and
$f_U\colon U\to V$ is a regular contraction, then $f$ is a rational
contraction. When $f$ is birational, also the converse holds.

Indeed consider a resolution of $f$:
$$\xymatrix{&{\widehat{X}}\ar[dl]_g\ar[dr]^{\widehat{f}}&\\
X\ar@{-->}[rr]^f&&Y}$$
where $\widehat{X}$ is normal and projective, and $g$ is birational
and an
isomorphism over $\dom(f)$. Then $Y\smallsetminus V\supseteq
\widehat{f}(\Exc(g))$, so that $\codim \widehat{f}(\Exc(g))\geq
2$. Hence if $D$ is an effective, $g$-exceptional Cartier divisor in
$\widehat{X}$, then
$(\widehat{f})_*\mathcal{O}_{\widehat{X}}(D)=\mathcal{O}_Y$
(\emph{i.e.}\ $D$ is $\widehat{f}$-fixed, in the terminology of
\cite{hukeel}). Thus $f$ is a rational contraction by
\cite[Def.~1.1 and Prop.~1.11]{hukeel}.
\end{remark}
If  $f\colon X\dasharrow Y$ is a rational contraction, there is a
well-defined injective linear map $f^*\colon \Nu(Y)\to\Nu(X)$, such
that  $f^*(\Nef(Y))\subseteq\Mov(X)$.
The bijection between the contractions of $X$ and the faces of
$\Nef(X)$ generalizes to rational contractions in the following way.
Define
$$\M:=\left\{f^*(\Nef(Y))\,|\,f\colon X\dasharrow Y\text{ is a
  rational contraction of $X$}\right\}.$$
Then we have the following.
\begin{proposition}[\cite{hukeel}, Prop.\ 1.1(3)]
The set  $\M$ is a fan\footnote{We recall that a \emph{fan} $\Sigma$ in
  $\R^m$ is a finite set of 
convex polyhedral cones in 
$\R^m$, with the following properties:
1)
for every $\sigma\in\Sigma$, every face of $\sigma$ is in $\Sigma$;
2)
for every $\sigma,\tau\in\Sigma$, $\sigma\cap\tau$ is a face of both
$\sigma$ and $\tau$.} 
in $\Nu(X)$. The union of the cones
in $\M$ is $\Mov(X)$, and every face of $\Mov(X)$ is a union of
cones in $\M$. Moreover,
 the association $$\left(f\colon X\dasharrow Y\right)\ 
\longmapsto\ f^*\left(\Nef(Y)\right)$$ 
gives a bijection between the set of rational
contractions of $X$ and $\M$.
\end{proposition}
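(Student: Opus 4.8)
The plan is to reduce the whole statement to the \emph{chamber decomposition} of $\Mov(X)$ furnished by the defining SQMs $f_j\colon X\dasharrow X_j$, and then to assemble the face structures of the individual nef cones into a fan. Since each $f_j$ is an isomorphism in codimension $1$, the pullback $f_j^*\colon\Nu(X_j)\to\Nu(X)$ is a linear isomorphism carrying $\Mov(X_j)$ onto $\Mov(X)$; write $C_j:=f_j^*(\Nef(X_j))$, a polyhedral cone of dimension $\rho_X$, so that by condition $(ii)$ the $C_j$ cover $\Mov(X)$. Each $X_j$ is itself a Mori dream space, so the bijection recalled above (between contractions of $X_j$ and faces of $\Nef(X_j)$) shows that the faces of $C_j$ are exactly the cones $f_j^*(g^*\Nef(Z))$ with $g\colon X_j\to Z$ a regular contraction, and each composite $g\circ f_j$ is a rational contraction of $X$. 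I would first check that $\M$ is precisely the set of faces of the chambers $C_j$: the inclusion $\supseteq$ is the previous sentence, while for $\subseteq$ one writes a rational contraction as $X\dasharrow\w{X}\xrightarrow{g}Y$ with $\w{X}$ an SQM and uses \cite[Prop.~1.11]{hukeel} (that the $C_j$ are exactly the nef cones of the SQMs of $X$, pulled back to $X$) to identify $\Nef(\w{X})$ with some $C_j$; then $f^*(\Nef Y)=f_j^*(g^*\Nef Y)$ is a face of $C_j$.

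The heart of the argument is to attach to every class $\alpha\in\Mov(X)$ a well-defined cone $F(\alpha)\in\M$ having $\alpha$ in its relative interior. Choose $j$ with $\alpha\in C_j$, put $\beta:=(f_j^*)^{-1}(\alpha)\in\Nef(X_j)$, and let $\tau$ be the minimal face of $\Nef(X_j)$ containing $\beta$; via the bijection for $X_j$ this face equals $g^*(\Nef Y)$ for a contraction $g\colon X_j\to Y$, and I set $F(\alpha):=f_j^*(\tau)$, so that $\alpha\in\operatorname{relint}F(\alpha)$. The key claim is that $F(\alpha)$, and indeed the rational contraction $g\circ f_j$ itself, is independent of the chamber $C_j$ used. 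It suffices to treat rational $\alpha$, since these are dense and the cones involved are rational polyhedral; for rational $\alpha$ the class $\beta$ is semiample on $X_j$ (the nef cone being generated by semiample classes, by $(i)$), so the target is $Y=\operatorname{Proj}\bigoplus_{m\geq 0}H^0(X_j,mL_\beta)$ for a divisor $L_\beta$ representing $\beta$. Because an SQM is an isomorphism in codimension $1$, these section spaces are canonically identified with $\bigoplus_m H^0(X,mL_\alpha)$, independently of $j$; hence different chambers containing $\alpha$ yield the same target $Y$, the same rational contraction $f$, and the same cone $F(\alpha)=f^*(\Nef Y)$.

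This single construction yields all the assertions. The relative interiors of the cones of $\M$ partition $\Mov(X)$, and $\M$ is closed under taking faces (a face of $f_j^*(g^*\Nef Y)$ is again the $f_j^*$-image of a face of $\Nef(X_j)$, hence associated to a contraction of $X_j$). A finite family of polyhedral cones closed under faces whose relative interiors partition its support is a fan: for $\sigma,\tau\in\M$ and $x\in\operatorname{relint}(\sigma\cap\tau)$, the unique member of $\M$ whose relative interior contains $x$ is simultaneously a face of $\sigma$ and of $\tau$, which forces $\sigma\cap\tau$ to be a common face. The union of the cones in $\M$ is $\Mov(X)$ because the chambers $C_j\in\M$ already cover it. For a face $G$ of $\Mov(X)$ cut out by a supporting functional $h\geq 0$, any $\alpha\in G$ has $F(\alpha)\subseteq\Mov(X)$, $\alpha\in\operatorname{relint}F(\alpha)$ and $h(\alpha)=0$, which forces $h\equiv 0$ on $F(\alpha)$, so $F(\alpha)\subseteq G$; thus $G=\bigcup_{\alpha\in G}F(\alpha)$ is a union of cones of $\M$. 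Finally the assignment $f\mapsto f^*(\Nef Y)$ is a bijection onto $\M$: it is surjective by the definition of $\M$, and injective because $f$ is recovered from $\sigma=f^*(\Nef Y)$ as the rational contraction $F(\alpha)$ attached to any $\alpha\in\operatorname{relint}\sigma$.

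The step I expect to be the main obstacle is the well-definedness in the second paragraph, namely that the rational contraction produced by a movable class does not depend on the chamber used to compute it; once this and the covering property $(ii)$ are in hand, the fan structure and the remaining statements are formal. The point needing care there is the identification of the section spaces $H^0(X_j,mL_\beta)$ with $H^0(X,mL_\alpha)$ across an SQM, for which I would invoke \cite[Prop.~1.11]{hukeel}, whereas the semiampleness of a nef class on the Mori dream space $X_j$, needed so that $\operatorname{Proj}$ of the section ring genuinely recovers the contraction, follows directly from condition $(i)$.
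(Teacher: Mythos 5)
There is no proof in the paper to compare yours against: this Proposition is imported verbatim from Hu and Keel (it is labelled ``\cite{hukeel}, Prop.~1.1(3)''), and Section~\ref{survey} announces that it ``contains no new results'', supplying proofs only for a few elementary facts lacking an easy reference --- this statement is not one of them. Measured against the original argument in \cite{hukeel}, your proposal is in substance a correct reconstruction of it, built on the same two pillars: the chamber decomposition $\Mov(X)=\bigcup_j C_j$ with $C_j=f_j^*(\Nef(X_j))$, coming from the definition of Mori dream space together with the face--contraction bijection applied to each $X_j$; and the chamber-independence of $F(\alpha)$, obtained by identifying, across SQMs, the contraction attached to a rational semiample class with $\operatorname{Proj}$ of its section ring. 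The convex-geometry steps you gloss are standard and completable: for instance, to reduce well-definedness to rational classes, pick a rational $\alpha'\in\operatorname{relint}\bigl(F_j(\alpha)\cap F_k(\alpha)\bigr)$, which has the same minimal faces as $\alpha$ in both chambers; and your claim that a finite family of cones closed under faces, whose relative interiors partition the support, is a fan follows by the usual supporting-functional argument. The one point to be conscious of is how much weight rests on your citation of \cite[Prop.~1.11]{hukeel}: that the $X_j$ of the definition exhaust \emph{all} SQMs of $X$ is a genuine theorem, not a formality, and it is precisely what gives your inclusion of $\M$ into the set of faces of the $C_j$ --- hence the finiteness of $\M$, the partition by relative interiors, and the injectivity of $f\mapsto f^*(\Nef(Y))$. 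Citing it is legitimate (the paper itself quotes Prop.~1.11 repeatedly, and its proof in \cite{hukeel} does not pass through the statement you are proving), but it is established there by exactly the section-ring mechanism of your second paragraph: an ample class on an SQM $\w{X}$, transported to $X$, lies in some $C_j$, and the identification of section rings forces $\w{X}\cong X_j$ compatibly. So with one more application of that same idea your argument would become self-contained, whereas the paper's choice to cite buys brevity appropriate to a survey section.
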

Here are some properties of this bijection:
\begin{enumerate}[$\bullet$]
\item
if $\sigma\in\M$ and $f\colon X\dasharrow Y$ is the corresponding
contraction, then $\dim\sigma=\rho_Y$;
\item $f$ is regular if and only if $\sigma\subseteq\Nef(X)$; in
  particular $\Nef(X)\in\M$ corresponds to the identity of $X$;
\item 
$f$ is \emph{of fiber type} (\emph{i.e.}\ $\dim Y<\dim X$) 
if and only if $\sigma$ is contained in the
  boundary of $\Eff(X)$;  
\item $f$ is a SQM if and only if $\dim\sigma=\rho_X$;
\item given two cones $\sigma_1,\sigma_2\in\M$ with corresponding
  rational contractions $f_i\colon X\dasharrow Y_i$, then
  $\sigma_1\subseteq
\sigma_2$ if and only if there is a regular contraction $g\colon
  Y_2\to Y_1$ such that the following diagram commutes:
$$\xymatrix{
&X\ar@{-->}[dl]_{f_1}\ar@{-->}[dr]^{f_2}&\\
{Y_1}&&{Y_2}\ar[ll]_g}$$
In particular, given $f_1\colon X\dasharrow Y_1$, the factorizations
$X\dasharrow\w{X}\to Y_1$ of $f_1$ with $X\dasharrow\w{X}$ a SQM 
correspond to
$\rho_X$-dimensional cones in $\M$ containing $\sigma_1$. 
\end{enumerate}
\begin{example}[Elementary rational contractions]\label{example}
Let $f\colon X\dasharrow Y$ be a rational contraction. We say that $f$
is \emph{elementary} if $\rho_X-\rho_Y=1$, equivalently if $\dim\sigma=
\rho_X-1$, where $\sigma\in\M$ is the cone corresponding to $f$.
As in the regular case,
we have three
possibilities:
\begin{enumerate}[$(i)$]
\item
if $\sigma$ is in the interior of $\Mov(X)$, then $f$ is an elementary
small contraction of a SQM of $X$;
\item
if $\sigma$ lies on the boundary of $\Mov(X)$ but in the interior of
$\Eff(X)$, then $f$ is an elementary divisorial contraction 
of a SQM of $X$;
\item
if $\sigma$ lies on the boundary of $\Eff(X)$, then $f$ is an elementary fiber
type contraction of a SQM of $X$.
\end{enumerate}
As in the regular case, we will say that $f$ is \emph{small} in case
$(i)$, \emph{divisorial} in case $(ii)$.
\end{example}
\begin{example}[Flips] Let  $f\colon X\to Y$ be
  a small elementary contraction, and consider
 $\sigma:=f^*(\Nef(Y))\in\M$. 
  The cone $\sigma$ is a facet of $\Nef(X)$ and lies in the interior
  of $\Mov(X)$, therefore there exists a unique $\rho_X$-dimensional
  cone $\tau\in\M$ such that $\sigma=\Nef(X)\cap\tau$. Let $g\colon
  X\dasharrow\w{X}$ be the SQM corresponding to $\tau$; then $g$ is the
  flip of $f$.
\end{example}
\begin{remark}\label{targetMDS}
Let $X$ be a Mori dream space and $f\colon X\dasharrow Y$ a rational
contraction. Suppose that
$Y$ is $\Q$-factorial. Then $Y$ is a Mori dream space, and for every
rational contraction $g\colon Y\dasharrow Z$, the composition
$g\circ f\colon
X\dasharrow Z$ is again a rational 
contraction.  
\end{remark}
\begin{proof}
The statement is clear from the definitions if $f$ is a SQM. In
general, we factor $f$ as 
 $X\dasharrow\w{X}\stackrel{\w{f}}{\to} 
Y$, where $\w{X}$ is a
SQM of $X$, and $\w{f}$ is a regular contraction. 
Since $\w{X}$
is a Mori dream space, and $g\circ f\colon X\dasharrow Z$ 
is a rational contraction if and
only if  $g\circ \w{f}\colon\w{X}\dasharrow Z$ is,
we can assume that $f$ is regular.

Now $f^*\colon\Pic(Y)\to\Pic(X)$ is injective, hence $Y$ has
finitely generated Picard group.
Then we can define the Cox rings $\Cox(Y)$ and $\Cox(X)$ of $Y$ and
$X$, see \cite[Def.~2.6]{hukeel}. By \cite[Prop.~2.9]{hukeel}
 $Y$ is a Mori
dream space if and only if 
$\Cox(Y)$ is a finitely generated $\C$-algebra, and for the same
reason $\Cox(X)$ is a finitely generated $\C$-algebra.

 We have $f^*(\Eff(Y))=\Eff(X)\cap
f^*(\Nu(Y))$, so that $f^*(\Eff(Y))$ is closed and is
a convex polyhedral cone. Moreover,
 via $f^*$, we can see $\Cox(Y)$
as a subalgebra of $\Cox(X)$, graded by the subsemigroup of integral
points of $f^*(\Eff(Y))$. This kind of subalgebra is called a Veronese
subalgebra; since $\Cox(X)$ is finitely generated, the
same holds for $\Cox(Y)$, see \cite[Prop.~1.2.2]{coxbook}. Thus $Y$ is
a Mori dream space.

Let us show that $g\circ f$ is a rational contraction.
We factor $g$ as $Y\stackrel{h}{\dasharrow}\w{Y}\stackrel{\w{g}}{\to} Z$,
where $h$ is a SQM and $\w{g}$ a regular contraction, and first
consider  $h\circ f\colon X\dasharrow \w{Y}$. We have 
$\codim(\w{Y}\smallsetminus\dom(h^{-1}))\geq 2$, and $(h\circ
f)_{f^{-1}(\dom(h))}\colon f^{-1}(\dom (h))\to \dom(h^{-1})$ is a
regular contraction, so $h\circ f$ is a rational contraction by
Rem.~\ref{contracting}. 

It is clear from Def.~\ref{ratcontr} that the composition of a
rational contraction with a regular contraction is again a rational
contraction; since $g\circ f=\w{g}\circ(h\circ f)$, we are done.
\end{proof}
\begin{remark}\label{aggiunta}
If $X$ is a Mori dream space and $f\colon X\to Y$ is a contraction, then $(\ker f_*)^{\perp}=f^*(\Nu(Y))$. In other words, for any divisor $D$ in $X$, one has $D^{\perp}\supseteq\ker f_*$ if and only if $[D]\in f^*(\Nu(Y))$.
Indeed it is easy to see that $(\ker f_*)^{\perp}\supseteq f^*(\Nu(Y))$, and since both subspaces have dimension $\rho_Y$, they must coincide.
\end{remark}
\begin{parg}{\bf Mori programs.} Let $X$ be a Mori dream space, and
$D$ a divisor in $X$. A Mori program
for $D$ is a sequence of varieties and birational maps
\stepcounter{thm}\begin{equation}\label{MMP}
X=X_0\stackrel{f_0}{\dasharrow}X_1{\dasharrow}\cdots\dasharrow
X_{k-1}\stackrel{f_{k-1}}{\dasharrow}X_k\end{equation}
\newcounter{MMPuno}\newcounter{MMPdue}\newcounter{MMPtre}
such that:
\stepcounter{thm}
\begin{enumerate}[(\thethm)]
\item\setcounter{MMPuno}{\value{thm}}
\stepcounter{thm}
every $X_i$ is a normal and $\Q$-factorial projective variety;
\item\setcounter{MMPdue}{\value{thm}}
\stepcounter{thm}
for every $i=0,\dotsc,k-1$ there is a birational, $D_i$-negative
 extremal ray $\sigma_i$ of
$\NE(X_i)$, such that
 $f_i$ is either the contraction of $\sigma_i$ (if divisorial), or its
flip (if small). The divisor $D_{i+1}$ is defined as
$(f_i)_*(D_i)$ if $f_i$ is a divisorial contraction, 
as the transform of $D_i$ if $f_i$ is a flip;
\item\setcounter{MMPtre}{\value{thm}}
either $D_k$ is semiample, or there exists a $D_k$-negative
 elementary contraction
of fiber type $f_k\colon X_k\to Y$.
\end{enumerate}
An important property of Mori dream spaces is that  one can run a Mori
program for any divisor 
$D$, see \cite[Prop.~1.1(1)]{hukeel}; moreover, the choice of the
extremal rays $\sigma_i$ is arbitrary among those having negative
intersection with $D_i$.
\begin{remark}\label{output}
A Mori program for $D$ ends with a fiber type
contraction if and only if $[D]\not\in\Eff(X)$.\end{remark}
\end{parg}
\begin{parg}{\bf Cones of curves.}
In $\N(X)$ we have dual cones:
$$\NM(X):=\Eff(X)^{\vee}\subseteq\Nef(X)^{\vee}=\NE(X).$$
Recall that by \cite{BDPP}, for any projective variety $X$, the dual
$\NM(X)$
of the cone $\overline{\Eff}(X)$ is the closure of the convex cone
generated by classes of irreducible curves belonging to a covering
family of curves.

When $X$ is a Mori dream space, the cone $\NM(X)$ is polyhedral,
because $\Eff(X)$ is. Using the same techniques as in \cite{carolina}
(in a much simpler situation), one
can see that 
 every one-dimensional face of $\NM(X)$ contains the class of a
curve moving in a covering family. The proof of the following Lemma is
adapted from \cite[Lemma~5.1 and Th.~5.2]{carolina}; we 
 write it explicitly
for the reader's convenience.
\begin{lemma}
Let $X$ be a Mori dream space and $\sigma$ a one-dimensional face of
$\NM(X)$. Then there exists a Mori program on $X$ ending with a fiber
type contraction:
$$X\dasharrow X'\stackrel{f}{\longrightarrow}Y$$
such that if $C\subset X$ is the transform of a general curve in a
general fiber of $f$, then $[C]\in\sigma$. 
\end{lemma}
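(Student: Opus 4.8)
The plan is to realize $\sigma$ by running a Mori program for a divisor lying just outside $\overline{\Eff}(X)$, across the facet dual to $\sigma$, following the strategy of \cite{carolina}. Since $X$ is a Mori dream space, $\overline{\Eff}(X)=\Eff(X)$ is a full-dimensional, pointed, polyhedral cone, and $\NM(X)=\overline{\Eff}(X)^{\vee}$ is its dual. Under the face correspondence the one-dimensional face $\sigma$ is dual to a facet $F:=\overline{\Eff}(X)\cap\sigma^{\perp}$ of $\overline{\Eff}(X)$, of dimension $\rho_X-1$ and spanning the hyperplane $\sigma^{\perp}\subset\Nu(X)$. First I would fix a class $[E]$ in the relative interior of $F$ and an ample class $[H]$, and set $D_{\varepsilon}:=E-\varepsilon H$. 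For $\varepsilon>0$ small, $[D_{\varepsilon}]\notin\overline{\Eff}(X)$ (it lies on the negative side of the supporting hyperplane $\sigma^{\perp}$), so by Rem.~\ref{output} every Mori program for $D_{\varepsilon}$ ends with a fiber type contraction $f\colon X'\to Y$.

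Next I would record two facts that survive every step of such a program. Let $C'\subset X'$ be a general curve in a general fiber of $f$ and $C\subset X$ its transform. Because the fibers of $f$ cover $X'$, the curve $C$ moves in a covering family on $X$, so $[C]\in\NM(X)$ and $[C]\neq 0$. Moreover the intersection numbers $D_{\varepsilon}\cdot C$ and $E\cdot C$ are unchanged along the program: for a flip this is clear (isomorphism in codimension one), while for a divisorial contraction $\pi$ with exceptional divisor $G$ one writes $\pi^{*}\pi_{*}L=L+cG$ and uses that a general $C'$ avoids the center (of codimension $\geq 2$), so that $G\cdot(\text{transform of }C')=0$; the projection formula then gives $L\cdot(\text{transform})=(\pi_{*}L)\cdot C'$ for $L=D_{\varepsilon}$ and $L=E$. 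Hence $D_{\varepsilon}\cdot C=D'_{\varepsilon}\cdot C'<0$ (since $f$ is $D'_{\varepsilon}$-negative), while $E\cdot C=E'\cdot C'\geq 0$, the latter because the transform $E'$ of $E$ is pseudoeffective (push-forwards and transforms of pseudoeffective divisors are pseudoeffective) and $C'$ is movable.

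The core of the argument is to upgrade $E\cdot C\geq 0$ to $E\cdot C=0$. Here I would use that $X$, being a Mori dream space, has only finitely many SQMs and contractions, so as $\varepsilon\to 0^{+}$ the output class $[C]$ takes only finitely many values. I would pick a value $[C]$ attained along a sequence $\varepsilon_n\to 0^{+}$; then $E\cdot C-\varepsilon_n\,(H\cdot C)=D_{\varepsilon_n}\cdot C<0$ for every $n$, and letting $n\to\infty$ gives $E\cdot C\leq 0$, hence $E\cdot C=0$. Finally, since $[E]$ lies in the relative interior of the facet $F$ dual to $\sigma$, the class $E$ exposes exactly $\sigma$, i.e. $\{\gamma\in\NM(X)\mid E\cdot\gamma=0\}=F^{\star}=\sigma$; combined with $[C]\in\NM(X)$ and $E\cdot C=0$, this forces $[C]\in\sigma$. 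The Mori program for $D_{\varepsilon_n}$ (for any such $n$) is then the one we exhibit.

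The hard part will be exactly the presence of divisorial contractions in the program: they make $\NM$ change (it is the target cone under push-forward, unlike $\overline{\Eff}$), so the transform of the extremal fiber class need not stay extremal, and one cannot deduce $[C]\in\sigma$ from $D_{\varepsilon}\cdot C<0$ alone—indeed $[C]$ a priori lands only in some higher-dimensional face of $\NM(X)$. The perturbation and limit argument above, which forces $E\cdot C=0$ together with the fact that $E$ exposes precisely $\sigma$, is what circumvents this; verifying the invariance of $E\cdot C$ through the divisorial contractions is the one computation that needs genuine care.
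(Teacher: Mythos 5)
Your proposal is correct, but it takes a genuinely different route from the paper's proof. Both arguments start identically: pick a divisor class in the relative interior of the facet of $\Eff(X)$ dual to $\sigma$ (your $E$, the paper's $B$ with $B^{\perp}\cap\NM(X)=\sigma$), subtract an ample class to land outside $\Eff(X)$, and invoke Rem.~\ref{output} to get a program ending in fiber type. The mechanisms for forcing the output into $\sigma$ then diverge. The paper runs a \emph{single} Mori program for $B-H$ \emph{with scaling of} $H$, i.e.\ with a prescribed choice of extremal ray at each step, and proves inductively (following \cite{carolina}) that the segment from $[D_i]$ to $[H_i]$ always crosses $\partial\Eff(X_i)$ at its midpoint; this pins down $t_k=1/2$ at the final step and gives $B_k\cdot\NE(f_k)=0$ exactly, hence $B\cdot C=0$. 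You instead allow \emph{arbitrary} programs, but for the whole family $D_\varepsilon=E-\varepsilon H$, and force $E\cdot C=0$ by pigeonhole as $\varepsilon\to 0$: this trades the scaling bookkeeping and the inductive midpoint invariant for the global finiteness of outputs, which is what needs to be quoted correctly. Two small repairs there: once a divisorial contraction occurs, the intermediate models are \emph{not} SQMs of $X$, so "finitely many SQMs and contractions" is not the right finiteness statement; what you need is the finiteness of Mori chambers, equivalently of birational contractions $X\dasharrow X_k$ together with the (finitely many) elementary fiber-type contractions of each model (\cite[Prop.~1.11(2)]{hukeel}) --- this is available and does the job. Second, the "output class $[C]$" is only well defined up to positive multiple as the curve varies in the fibers, so the pigeonhole should be run on the \emph{ray} spanned by $[C]$, namely the image of $\NE(f_k)$ under the transpose of the strict-transform map $\Nu(X)\to\Nu(X_k)$; since $\sigma$ is a ray, the conclusion is unaffected. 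Your invariance computation through divisorial steps ($\pi^{*}\pi_{*}L=L+cG$ plus $G\cdot C=0$ for general $C$) and the identification $E^{\perp}\cap\NM(X)=\sigma$ for $[E]$ in the relative interior of the dual facet are both sound, and indeed the paper's own final step relies on the same transform-of-general-curves principle. What each approach buys: the paper's produces an explicit, constructively chosen program and stays self-contained at the level of one MMP; yours is conceptually lighter (no scaling, no induction on the crossing point) but leans on the full MDS finiteness package --- a fair trade in this setting, where that package comes for free.
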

\begin{proof}
Let $B$ be an effective divisor such that
$B^{\perp}\cap\NM(X)=\sigma$, let $H$ be an ample divisor, and set
$D:=B-H$. Since $[B]$ lies on the boundary of $\Eff(X)$ and $[H]$ in
its interior, we have $[D]\not\in\Eff(X)$. By Rem.~\ref{output}, every
Mori program for $D$ ends with a fiber type contraction. 

We run a Mori program for $D$ with scaling of $H$, see 
\cite[\S~3.10]{BCHM} and \cite[\S~3.8]{carolina}.
Concretely,
this means a sequence as \eqref{MMP}, where at each step the extremal
ray $\sigma_i$ is chosen in a prescribed way. 
At the first step, we choose a facet of $\Nef(X)$
 met by moving from $[D]$ to $[H]$ along the segment $s$ joining them
in $\Nu(X)$. This facet corresponds  to a $D$-negative
 extremal ray of $\NE(X)$; this will be $\sigma_0$. 
This process can be repeated at
each step, using $H_i$ in $X_i$, where $H_i:=(f_{i-1}\circ\cdots\circ
f_0)_*(H)$.

The segment $s$ meets the boundary of $\Eff(X)$ at the point
$[B]/2=([D]+[H])/2$. The key remark, made in
\cite[Lemma~5.1]{carolina}, is that 
for every $i\in\{1,\dotsc,k\}$
 the segment from $[D_i]$ to $[H_i]$ in $\Nu(X_i)$ meets the
boundary of $\Eff(X_i)$ at the point
$([D_i]+[H_i])/2$. Indeed, suppose that this is true for $X_{i-1}$,
and consider $f_{i-1}\colon X_{i-1}\dasharrow X_i$. The statement is
clear if $f_{i-1}$ is a flip, thus let's assume that it is a
divisorial contraction. 

We know that $(1-t)[D_{i-1}]+t[H_{i-1}]\in\Eff(X_{i-1})$ for $t\in [1/2,1]$,
and  $(1-t)[D_{i-1}]+t[H_{i-1}]\not\in\Eff(X_{i-1})$
for $t\in [0,1/2)$. Moreover
  $(1-t)D_{i}+tH_{i}=(f_{i-1})_* ((1-t)D_{i-1}+tH_{i-1})$, so that
  again $(1-t)[D_{i}]+t[H_{i}]\in\Eff(X_{i})$ if $t\in [1/2,1]$. 

We have
  $D_{i-1}\cdot\NE(f_{i-1})<0$; moreover, by the choice of $\NE(f_{i-1})$,
 there exists $t_0\in [1/2,1]$ such
  that $((1-t_0)D_{i-1}+t_0H_{i-1})\cdot\NE(f_{i-1})=0$. Hence
$$\left((1-t)D_{i-1}+tH_{i-1}\right)\cdot\NE(f_{i-1})<0\quad\text{for
    every }t<\frac{1}{2}.$$
Therefore if
  $(1-t)[D_{i}]+t[H_{i}]\not\in\Eff(X_{i})$  for some $t\in [0,1/2)$, we can proceed
as in the proof of Rem.~\ref{output} and get a contradiction.

\medskip

In the end we get an elementary contraction of fiber type $f_k\colon
X_k\to Y$ such that $((1-t_k)D_{k}+t_kH_{k})\cdot\NE(f_k)=0$ for some
$t_k\in(0,1]$. Then $(1-t_k)[D_{k}]+t_k[H_{k}]$ lies on the boundary of
$\Eff(X_k)$, and by what we proved above, $t_k=1/2$. This means that
if $C\subset X$ is the transform of a general curve in a
general fiber of $f_k$, then $B\cdot C=0$, therefore
$[C]\in\sigma$. 
\end{proof}
\end{parg}
\begin{parg}{\bf Non-movable prime divisors.}
We conclude this section by showing that non-movable prime divisors in $X$ 
are exactly the divisors which become exceptional on some SQM of $X$.
Notice that if $D$ is a divisor in $X$, then $D$ is movable
(\emph{i.e.}\ the stable locus of $D$ has codimension at least $2$) if
and only if $[D]\in\Mov(X)$.
\begin{remark}\label{nonmovable}
Let $X$ be a Mori dream space, and $D\subset X$ a prime divisor. The
following conditions are equivalent:
\begin{enumerate}[$(i)$]
\item $D$ is not movable;
\item there exists a SQM ${X}\dasharrow\w{X}$ such that the transform
  $\w{D}\subset\w{X}$ of $D$ is the exceptional
divisor of an elementary divisorial contraction $\w{X}\to Y$.
\end{enumerate}
Moreover, the association $D\mapsto \R_{\geq 0}[D]$ gives a bijection between:
\begin{enumerate}[$\bullet$]
\item the set of non-movable prime divisors in $X$, and
\item the set of one-dimensional faces of $\Eff(X)$ not contained in
  $\Mov(X)$. 
\end{enumerate}
\end{remark}
Let us point out that after the proof,
 $X\dasharrow\w{X}\to Y$ (notation as in $(ii)$)
is a Mori program for $D$ (ending with zero), so that 
$X\dasharrow\w{X}$
factors as a sequence of
$D$-negative flips. In fact, every Mori program for $D$ takes this form. 
\begin{proof}
Suppose that $D$ is not movable, and 
consider a Mori program for $D$. Since $D$ is effective, by
Rem.~\ref{output}
the program
must end with $D$ becoming nef. On the other hand, there is no SQM of
$X$ where $D$ is nef, because $D$ is not movable. Therefore in the Mori
program  some divisorial contraction must occur. Let $f\colon \w{X}\to
Y$ be the first divisorial contraction: then the previous steps are
flips, hence $X\dasharrow\w{X}$ is a SQM (possibly
$\w{X}=X$). Moreover since $\w{D}\cdot\NE(f)<0$ and $\w{D}$ is a prime
divisor, we have $\w{D}=\Exc(f)$. Since $f_*(\w{D})=0$, the divisorial contraction  $f\colon \w{X}\to
Y$ is the last step of the Mori program.

Conversely, if $(ii)$ holds, then $\w{D}$ is not movable, hence neither
is $D$. 

Finally, suppose that $(i)$ and $(ii)$ hold, 
and let $D_1,D_2\subset\w{X}$
be prime divisors such that
$a_1D_1+a_2D_2\equiv\w{D}$, $a_i\in\R_{>0}$. Since
$\w{D}\cdot\NE(f)<0$, there exists $i\in\{1,2\}$ such that $D_i
\cdot\NE(f)<0$, hence $D_i=\w{D}$. This implies that $D_1=D_2=\w{D}$,
therefore $\R_{\geq 0}[\w{D}]$ is a one-dimensional face of
$\Eff(\w{X})$. Similarly, one shows that $\w{D}$ is the unique prime
divisor whose class belongs to  this face.
\end{proof}
We will also need the following.
\begin{remark}\label{factor}
Let $X$ be a Mori dream space, $g\colon X\to Z$ a contraction, and 
$D\subset X$ a non-movable 
prime divisor such that $g(D)=\{pt\}$. 
Then there exists a commutative diagram:
$$\xymatrix{{{X}}\ar[d]_{g}\ar@{-->}[r]& {\w{X}}\ar[d]^{f}\\
{Z}&{Y}\ar[l]_{h}
}$$
where $X\dasharrow\w{X}$ is a SQM which factors as a sequence of
$D$-negative  flips, $f$  is an elementary
divisorial contraction        
with exceptional divisor (the transform of) $D$, and $h$
is a contraction.
\end{remark}
\begin{proof}
By Rem.~\ref{nonmovable}, there are a birational map $X\dasharrow\w{X}$ which
factors as a sequence of $D$-negative flips, and an elementary
divisorial contraction $f\colon \w{X}\to Y$ with exceptional divisor
the transform of $D$.
If $\sigma$ is a $D$-negative extremal ray of $\NE(X)$, then
$\Lo(\sigma)\subseteq D$, so that $g(\Lo(\sigma))=\{pt\}$ and
$\sigma\subseteq
\NE(g)$. Iterating this reasoning, we see that the rational map
$h\colon Y\dasharrow Z$ is indeed regular.
\end{proof}
\end{parg}
\subsection{Quasi-elementary rational contractions}\label{qe}
In this section we introduce a special class of 
rational contractions of
fiber type of Mori dream spaces, called 
quasi-elementary contractions,
 which share many good properties
of elementary rational contractions of fiber type. The notion of
quasi-elementary contraction was first introduced in \cite{fanos}, but
in a different context: there the objects were regular, $K$-negative
contractions of a smooth projective variety. Here, since we are
considering Mori dream spaces, we do not need to assume
$K$-negativity. 

Let $X$ be a Mori dream space and $f\colon X\to Y$ a contraction.
Recall that $$\NE(f):=\ker f_*\cap\NE(X)$$ 
is a face of $\NE(X)$, corresponding
to the face 
$f^*(\Nef(Y))$ of $\Nef(X)$. 
In the same way we can associate to $f$ a face
of $\NM(X)$, setting 
$$\NM(f):=\ker f_*\cap\NM(X)=\NE(f)\cap\NM(X).$$
Notice that $\NM(f)$ is non-zero if and only if $f$ is of fiber type.
\begin{lemma}\label{MFS} In the notation above, let $F\subset X$ be a
  general  fiber of $f$, and $i\colon F\hookrightarrow X$ the inclusion.
Then 
$$\NM(f)=i_*(\NM(F)),$$ 
the linear span of $\NM(f)$ is $\N(F,X)$, and  $\dim\NM(f)=\dim\N(F,X)$.
Moreover $\NM(f)^{\star}=\Eff(X)\cap\N(F,X)^{\perp}$ 
is the smallest face of
$\Eff(X)$ containing $f^*(\Eff(Y))$.
\end{lemma}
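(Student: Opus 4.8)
The plan is to prove first the central identity $\NM(f)=i_*(\NM(F))$, and then to deduce from it the assertions about the linear span and dimension, and finally the description of the dual face $\NM(f)^{\star}$ by cone polarity. The main tools will be the characterization of $\NM(X)$ via covering families (from \cite{BDPP}, recalled above), the preceding Lemma on one-dimensional faces of $\NM(X)$, and the standard bijection between the faces of the dual cones $\NM(X)$ and $\Eff(X)$. Throughout I use that $\NM(X)\subseteq\overline{\NE}(X)$ is a pointed polyhedral cone, and that $\NM(f)$ is a face of it, as already observed in the text.

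For the inclusion $i_*(\NM(F))\subseteq\NM(f)$, I would start from a covering family of curves on a general fiber $F$: as the fiber varies in a dense open subset of $Y$, these curves sweep out a dense subset of $X$ and hence form a covering family of $X$, so their class lies in $\NM(X)$; since each such curve is contracted by $f$, its class lies in $\ker f_*$, hence in $\NM(f)$. By continuity of $i_*$ and closedness of $\NM(f)$ this gives the inclusion of cones. For the reverse inclusion $\NM(f)\subseteq i_*(\NM(F))$, I would use that $\NM(f)$, being a face of a pointed polyhedral cone, is the conical hull of its extremal rays, each of which is a one-dimensional face of $\NM(X)$ contained in $\ker f_*$. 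By the preceding Lemma each such ray contains the class $[C]$ of a curve moving in a covering family; since $[C]\in\ker f_*$ we have $(f^*H)\cdot C=H\cdot f_*[C]=0$ for $H$ ample on $Y$, so $C$ is contracted by $f$. A generality argument then shows that the general member of this family lies in a general fiber $F$ and that these members cover $F$, whence $[C]\in i_*(\NM(F))$; taking conical hulls yields $\NM(f)\subseteq i_*(\NM(F))$.

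From the identity $\NM(f)=i_*(\NM(F))$ the span and dimension statements follow once one knows that $\NM(F)$ is full-dimensional in $\N(F)$, i.e.\ that $\overline{\Eff}(F)$ is pointed --- a standard fact for projective varieties, which one checks by pairing a pseudoeffective class and its negative with a power of an ample class. Then $\operatorname{span}\NM(f)=i_*(\operatorname{span}\NM(F))=i_*(\N(F))=\N(F,X)$, and $\dim\NM(f)=\dim\N(F,X)$. For the last assertion, since $\operatorname{span}\NM(f)=\N(F,X)$ one gets $\NM(f)^{\star}=\Eff(X)\cap\NM(f)^{\perp}=\Eff(X)\cap\N(F,X)^{\perp}$ directly from the definition of the dual face. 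To identify it as the smallest face containing $f^*(\Eff(Y))$, I would note that $f^*(\Eff(Y))\subseteq\NM(f)^{\star}$, since a pullback of an effective class is effective and pairs to zero with $\ker f_*\supseteq\NM(f)$; hence the smallest such face is contained in $\NM(f)^{\star}$. Conversely, for $H$ ample on $Y$ the face of $\NM(X)$ cut out by $f^*H$ is exactly $\NM(X)\cap(f^*H)^{\perp}=\NM(X)\cap\ker f_*=\NM(f)$, so by the face duality $f^*H$ lies in the relative interior of the dual face $\NM(f)^{\star}$, forcing the smallest face containing $f^*H$ --- and a fortiori the one containing $f^*(\Eff(Y))$ --- to be all of $\NM(f)^{\star}$.

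The main obstacle I expect is the geometric generality argument in the second inclusion: passing from ``$C$ is a covering-family curve whose class lies in $\ker f_*$'' to ``the general such $C$ lies in, and sweeps out, a general fiber $F$''. This requires keeping track of the family over $Y$ (the induced map from the parameter space to $Y$ is dominant because $f$ is, and generic flatness controls the fibres), and checking in characteristic zero that the general fibre is reduced and irreducible, so that $\N(F)$ and $\NM(F)$ behave as expected.
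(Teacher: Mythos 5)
You follow the paper's skeleton: prove $\NM(f)=i_*(\NM(F))$ via the BDPP description of $\NM$ and the preceding Lemma on one-dimensional faces, deduce the span and dimension statements from the fact that $\NM(F)$ spans $\N(F)$, and obtain the last assertion by cone duality. Your reverse inclusion $\NM(f)\subseteq i_*(\NM(F))$ is exactly the paper's argument, and your duality step is a correct variant of the paper's (the paper uses the equivalence $\tau\subseteq H\Leftrightarrow\tau^{\star}\supseteq\Eff(X)\cap H^{\perp}$ with $H=\ker f_*$ together with Rem.~\ref{aggiunta}, whereas you observe that $f^*H$, for $H$ ample on $Y$, cuts out exactly the face $\NM(f)$ of $\NM(X)$ and hence lies in the relative interior of $\NM(f)^{\star}$; the two arguments are equivalent).

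The genuine gap is in the forward inclusion $i_*(\NM(F))\subseteq\NM(f)$, whose nontrivial content is that $i_*(\gamma)\in\NM(X)$ for $\gamma\in\NM(F)$. You claim that a covering family of curves on the general fiber $F$ ``sweeps out a dense subset of $X$ as the fiber varies''. This spreading out is not automatic: a curve contained in a fiber need not deform out of that fiber. What governs this is the component $W$ of $\operatorname{Chow}(X)$ containing the family: every curve parametrized by $W$ has class in $\ker f_*$ and so is contracted by $f$, hence $W$ maps to $Y$, and the family spreads out exactly when $W$ dominates $Y$. There are countably many components whose image is a proper closed subvariety of $Y$, and the union of these countably many images need not be contained in any proper closed subset; consequently your argument is valid only when $F$ lies over a \emph{very general} point of $Y$, while the Lemma (and its later use, e.g.\ in Prop.~\ref{sabrina}) concerns a \emph{general} fiber. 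Note also that you flagged the generality issue on the wrong side: in the reverse inclusion the family already covers $X$, so restricting it over a general point of $Y$ is harmless; it is the forward inclusion that would require curves to move out of fibers. The paper sidesteps all of this with a dual argument: choose prime divisors $D_1,\dotsc,D_r$ whose classes generate the polyhedral cone $\Eff(X)$; a general fiber $F$ is contained in no $D_j$, so $(D_j)_{|F}$ is effective and $i_*(\gamma)\cdot D_j=\gamma\cdot(D_j)_{|F}\geq 0$ for all $\gamma\in\NM(F)$, i.e.\ $i_*(\gamma)\in\Eff(X)^{\vee}=\NM(X)$. This imposes only finitely many open conditions on $F$ and requires no deformation theory of curves; you should replace your first inclusion by this argument.
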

\begin{proof}
We clearly have $i_*(\NM(F))\subset\ker f_*$. 
Let $D_1,\dotsc,D_r\subset X$ be prime divisors whose classes
generate $\Eff(X)$. Then for every $j\in\{1,\dotsc,r\}$ $D_j$
does not contain $F$, and if
$\gamma\in\NM(F)$ we have
$$i_*(\gamma)\cdot D_j=\gamma\cdot (D_j)_{|F}\geq 0,$$
so that $i_*(\gamma)\in\Eff(X)^{\vee}=\NM(X)$. This shows that
$i_*(\NM(F))\subseteq\ker f_*\cap\NM(X)=\NM(f)$. 

Conversely, let $\sigma$ be a one-dimensional face of $\NM(f)$. By
Rem.~\ref{MFS}, there is a covering family of 
curves $\{C_t\}$ in $X$ whose numerical class belongs to $\sigma$. On
the other hand, since $\sigma\subset\ker f_*$, all these curves are 
contracted to a point by $f$. This means that a subfamily $\{C_{t'}\}$
gives a covering family of curves in $F$, hence $[C_{t'}]\in\NM(F)$
and $\sigma\subseteq i_*(\NM(F))$. Therefore $\NM(f)=i_*(\NM(F))$.

Now since $\NM(F)$ generates $\N(F)$, we get that $\N(F,X)=i_*(\N(F))$
is the linear span of $\NM(f)$ in $\N(X)$, and
$\dim\NM(f)=\dim\N(F,X)$. 

\medskip

For the last statement, let $\tau$ be a face of $\NM(X)$ and
$\tau^{\star}$ the corresponding face of $\Eff(X)$. By
the  definition of $\tau^{\star}$, if $H\subseteq\N(X)$ is a linear
subspace, then 
$\tau\subset H$ if and only if $\tau^{\star}\supseteq\Eff(X)\cap H^{\perp}$.
Now take $H=\ker f_*$. Since $H^{\perp}=f^*(\Nu(Y))$ (see Rem.~\ref{aggiunta}), 
we get:
$$\tau\subseteq\NM(f)\ \Longleftrightarrow \ \tau^{\star}\supseteq \Eff(X)\cap 
f^*(\Nu(Y))=f^*(\Eff(Y)).$$
\end{proof}
\begin{proposition}\label{sabrina}
Let $X$ be a Mori dream space,
 $f\colon X\dasharrow Y$ a rational
contraction, and $\sigma\in\M$ the corresponding
cone.  Let $X\dasharrow\w{X}\stackrel{\w{f}}{\to}Y$ be a factorization
of $f$ as a SQM followed by a contraction, 
and  let $F\subset\w{X}$ be a general fiber of
$\w{f}$. 

The following properties are equivalent:
\begin{enumerate}[$(i)$]
\item $\N(F,\w{X})=\ker\w{f}_*$;
\item  $\dim\N(F,\w{X})=\rho_X-\rho_Y$;
\item $\dim\NM(\w{f})=\rho_X-\rho_Y$;
\item $\sigma$ is
  contained in a face of $\Eff(X)$ of the same dimension as $\sigma$
  (that is, $\rho_Y$);
\item $f^*(\Eff(Y))$ is a face of $\Eff(X)$.
\end{enumerate}
\end{proposition}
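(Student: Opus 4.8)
The plan is to work throughout on the SQM $\w{X}$, where $\w{f}$ is a genuine contraction and Lemma~\ref{MFS} is available. Since $X\dasharrow\w{X}$ is an isomorphism in codimension one it identifies $\Nu(X)\cong\Nu(\w{X})$, carrying $\Eff(X)$ onto $\Eff(\w{X})$, the cone $\sigma=f^*(\Nef(Y))$ onto $\w{f}^*(\Nef(Y))$, and $f^*(\Eff(Y))$ onto $\w{f}^*(\Eff(Y))$; so I may test $(iv)$ and $(v)$ for $\w{f}$ on $\w{X}$. I would first record two facts: $\dim\ker\w{f}_*=\rho_X-\rho_Y$ (by Rem.~\ref{aggiunta}, $(\ker\w{f}_*)^{\perp}=\w{f}^*(\Nu(Y))$ has dimension $\rho_Y$), and $\w{f}^*(\Eff(Y))=\Eff(\w{X})\cap\w{f}^*(\Nu(Y))$ (established for regular contractions inside the proof of Rem.~\ref{targetMDS}). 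For $(i)\Leftrightarrow(ii)$, note $\N(F,\w{X})\subseteq\ker\w{f}_*$ always, since curves inside a fiber are $\w{f}$-contracted; as $\dim\ker\w{f}_*=\rho_X-\rho_Y$, the inclusion is an equality precisely when $\dim\N(F,\w{X})=\rho_X-\rho_Y$. Finally $(ii)\Leftrightarrow(iii)$ is immediate, since $\dim\NM(\w{f})=\dim\N(F,\w{X})$ by Lemma~\ref{MFS}.

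The core of the argument is $(iii)\Leftrightarrow(v)$. By the last part of Lemma~\ref{MFS}, $\eta_0:=\NM(\w{f})^{\star}=\Eff(\w{X})\cap\N(F,\w{X})^{\perp}$ is the smallest face of $\Eff(\w{X})$ containing $\w{f}^*(\Eff(Y))$. Since $\Eff(\w{X})$ is a full-dimensional pointed polyhedral cone with dual $\NM(\w{X})$, the bijection $\tau\mapsto\tau^{\star}$ is dimension-reversing, giving $\dim\eta_0=\rho_X-\dim\NM(\w{f})$; hence $(iii)$ is equivalent to $\dim\eta_0=\rho_Y$. Now $\w{f}^*(\Eff(Y))\subseteq\eta_0$ with $\dim\w{f}^*(\Eff(Y))=\rho_Y$, so $\dim\eta_0=\rho_Y$ forces the linear span of $\eta_0$ to be $\w{f}^*(\Nu(Y))$, whence $\eta_0\subseteq\Eff(\w{X})\cap\w{f}^*(\Nu(Y))=\w{f}^*(\Eff(Y))$ and therefore $\eta_0=\w{f}^*(\Eff(Y))$ is a face, which is $(v)$. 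Conversely, if $\w{f}^*(\Eff(Y))$ is a face it equals the smallest face $\eta_0$ containing it, so $\dim\eta_0=\rho_Y$, recovering $(iii)$.

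It remains to show $(iv)\Leftrightarrow(v)$. Here $(v)\Rightarrow(iv)$ is clear, since $\w{f}^*(\Eff(Y))$ is then a face of dimension $\rho_Y$ containing $\sigma$. For $(iv)\Rightarrow(v)$, suppose $\sigma\subseteq\eta$ for some face $\eta$ of $\Eff(\w{X})$ with $\dim\eta=\rho_Y$. As $\sigma=\w{f}^*(\Nef(Y))$ spans $\w{f}^*(\Nu(Y))$ (recall $\dim\sigma=\rho_Y$), the span of $\eta$ must be $\w{f}^*(\Nu(Y))$. I would then use that every face of a polyhedral cone is exposed: writing $\eta=\Eff(\w{X})\cap u^{\perp}$ with $u\in\NM(\w{X})$ gives $\langle\eta\rangle\subseteq u^{\perp}$, so $\Eff(\w{X})\cap\langle\eta\rangle=\eta$; therefore $\w{f}^*(\Eff(Y))=\Eff(\w{X})\cap\w{f}^*(\Nu(Y))=\Eff(\w{X})\cap\langle\eta\rangle=\eta$ is a face.

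The computations are otherwise routine, so I expect the only delicate points to be the two pieces of polyhedral geometry invoked above: the dimension formula $\dim\tau+\dim\tau^{\star}=\rho_X$ for the face correspondence, which relies on $\Eff(\w{X})$ being full-dimensional and pointed, and the fact that faces of a polyhedral cone are exposed, used in $(iv)\Rightarrow(v)$. Both are standard, but they are what makes the passage between the combinatorics of $\NM(\w{f})$ and the face structure of $\Eff(X)$ work; everything else is bookkeeping transported across the SQM identification.
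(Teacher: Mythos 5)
Your argument is correct and is essentially the paper's proof: the same reduction to the regular contraction $\w{f}$ via the SQM identification of $\Nu(X)$ with $\Nu(\w{X})$, the same key inputs (Lemma~\ref{MFS} together with Rem.~\ref{aggiunta} and the identity $\w{f}^*(\Eff(Y))=\Eff(\w{X})\cap\w{f}^*(\Nu(Y))$), and the same polyhedral duality between faces of $\Eff(\w{X})$ and faces of $\NM(\w{X})$. The differences are purely organizational: you prove pairwise equivalences and invoke exposedness of faces in $(iv)\Rightarrow(v)$, whereas the paper runs the cycle $(i)\Rightarrow(iii)\Rightarrow(v)\Rightarrow(iv)\Rightarrow(ii)\Rightarrow(i)$ and, in its step $(iv)\Rightarrow(ii)$, computes $\eta^{\star}=\NM(\w{f})$ using the same dimension-complementarity of the face correspondence that you use.
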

\begin{definition}
We say that $f$ is \emph{quasi-elementary} if the equivalent
conditions above are satisfied and $f$ is non-trivial
(\emph{i.e.}\ $f$ is not an isomorphism nor constant). In particular,
$f$ must be of fiber type. 
\end{definition}
\noindent Notice that $f$ is quasi-elementary if and only if $\w{f}$
is (notation as in Prop.~\ref{sabrina}).
\begin{proof}[Proof of Prop.~\ref{sabrina}]
Up to replacing $X$ by $\w{X}$, we can assume that $f\colon X\to Y$ is regular.

\medskip

\noindent $(i)\Rightarrow(iii)$ This 
 follows from  Lemma~\ref{MFS}.

\medskip

\noindent$(iii)\Rightarrow(v)$ 
Since $\dim\NM(f)=\dim\ker f_*$, $\ker f_*$ is the linear span of
$\NM(f)$. Therefore 
$$\NM(f)^{\star}=\Eff({X})\cap (\ker f_*)^{\perp}=\Eff({X})\cap
{f}^*(\Nu(Y))= {f}^*(\Eff(Y)).$$

\medskip

\noindent$(v)\Rightarrow(iv)$ This is because
$\sigma=f^*(\Nef(Y))\subseteq{f}^*(\Eff(Y))$.  

\medskip

\noindent$(iv)\Rightarrow(ii)$ Let $\eta$ be the face of $\Eff({X})$
containing $\sigma$ and such that $\dim \eta=\dim\sigma=\rho_Y$. 
Then the linear span of $\eta$ is the same as that of $\sigma$, namely
$f^*(\Nu(Y))$. This gives 
$$\eta^{\star}=\NM(X)\cap(f^*(\Nu(Y)))^{\perp}=\NM(X)\cap\ker f_*=\NM(f),$$
and 
by Lemma~\ref{MFS} we get
$\dim\N(F,X)=\dim\eta^{\star}=\rho_X-\rho_Y$.

\medskip

\noindent$(ii)\Rightarrow(i)$ This follows from $\N(F,X)\subseteq\ker f_*$ and $\dim\ker f_*=\rho_X-\rho_Y$.
 \end{proof}
\begin{remark}\label{pippo}
Let $X$ be a Mori dream space and $f\colon X\dasharrow Y$ a rational
contraction of fiber type with $\dim Y>0$.
\begin{enumerate}[$\bullet$]
\item
If $f$ is elementary, then it is also quasi-elementary.
\item
If $\dim Y=\dim X-1$, then $f$ is elementary if and only if it is
quasi-elementary. 
\item
If $f$ is quasi-elementary and regular, and $F\subset X$ is a general
fiber, then $\rho_X-\rho_Y\leq\rho_F$. 
\end{enumerate}
\end{remark}
If $X$ is a Mori dream space, then $X$ has a (non-trivial) rational
contraction of fiber type if and only if the boundaries of $\Mov(X)$
and $\Eff(X)$ meet outside zero. For the quasi-elementary case we have
the following criterion. 
\begin{corollary}\label{tobia}
Let $X$ be a Mori dream space and $r\in\{1,\dotsc,\rho_X-1\}$. 
Then $X$ has a quasi-elementary
rational contraction $f\colon X\dasharrow Y$ with $\rho_Y=r$
if and only if there exists  an $r$-dimensional
face of $\Mov(X)$ contained in an $r$-dimensional face of $\Eff(X)$.
\end{corollary}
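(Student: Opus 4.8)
The plan is to prove Corollary~\ref{tobia} by directly invoking the equivalence $(iv)$ in Proposition~\ref{sabrina}, since the statement is essentially a dictionary translation between rational contractions and the fan $\M$. First I would recall that by Proposition~1.1(3) of \cite{hukeel} (the unlabeled Proposition in the excerpt), a quasi-elementary rational contraction $f\colon X\dasharrow Y$ with $\rho_Y=r$ corresponds to a cone $\sigma\in\M$ of dimension $r$; moreover $\sigma$ is contained in the relative interior of a unique face of $\Mov(X)$, and the union of cones of $\M$ is $\Mov(X)$ with every face of $\Mov(X)$ a union of such cones. So the key combinatorial fact I need is: the $r$-dimensional cones $\sigma\in\M$ that span an $r$-dimensional face of $\Mov(X)$ are precisely the $r$-dimensional faces of $\Mov(X)$ themselves.

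For the forward direction, suppose $f$ is quasi-elementary with $\rho_Y=r$. By condition $(iv)$ of Proposition~\ref{sabrina}, the corresponding cone $\sigma$ is contained in a face $\eta$ of $\Eff(X)$ with $\dim\eta=r$. I would then argue that $\sigma$ is itself an $r$-dimensional face of $\Mov(X)$: since $\sigma\subseteq\Mov(X)$ has dimension $r=\rho_Y$ and $f$ is of fiber type, $\sigma$ lies on the boundary of $\Eff(X)$, hence on the boundary of $\Mov(X)$ as well. The cone $\sigma=f^*(\Nef(Y))$ is a full-dimensional (dimension $\rho_Y$) nef cone pulled back from $Y$, and its linear span equals $f^*(\Nu(Y))$, which is also the linear span of $\eta$. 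Thus $\sigma\subseteq\eta$ with equal linear spans, so $\eta$ gives the required $r$-dimensional face of $\Eff(X)$, and $\sigma$ is an $r$-dimensional face of $\Mov(X)$ contained in $\eta$.

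For the converse, suppose there is an $r$-dimensional face $\eta_0$ of $\Mov(X)$ contained in an $r$-dimensional face $\eta$ of $\Eff(X)$. Since $\eta_0$ is a face of $\Mov(X)$, it is a union of cones of $\M$; being $r$-dimensional, it must itself be one of the cones $\sigma\in\M$ (a face of $\Mov(X)$ of dimension equal to that of a maximal-in-$\eta_0$ cone of $\M$ coincides with that cone). Let $f\colon X\dasharrow Y$ be the rational contraction corresponding to $\sigma=\eta_0$; then $\rho_Y=\dim\sigma=r$. The containment $\sigma\subseteq\eta$ with $\dim\eta=r=\dim\sigma$ is exactly condition $(iv)$, so $f$ is quasi-elementary, and non-triviality follows from $r\in\{1,\dotsc,\rho_X-1\}$ (so $f$ is neither constant nor an isomorphism).

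The main obstacle I anticipate is the bookkeeping in identifying an $r$-dimensional face of $\Mov(X)$ with a single cone of $\M$: a priori a face of $\Mov(X)$ is only a \emph{union} of cones of $\M$, so I must verify that an $r$-dimensional face cannot be a nontrivial union of lower-dimensional cones of $\M$ together with one $r$-dimensional cone in a way that obstructs the identification. In fact, if $\eta_0$ is an $r$-dimensional face of the \emph{polyhedral} cone $\Mov(X)$ and is a union of cones in the fan $\M$, then $\eta_0$ contains at least one $r$-dimensional cone $\sigma\in\M$; since $\sigma\subseteq\eta_0$ have the same dimension and $\eta_0$ is a face, the relative interior of $\sigma$ meets the relative interior of $\eta_0$, forcing $\sigma=\eta_0$ by the fan and face compatibility. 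This is the step I would write out carefully, as everything else reduces to transcribing Proposition~\ref{sabrina}$(iv)$.
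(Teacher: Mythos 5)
Your high-level route---translating both directions through Prop.~\ref{sabrina}~$(iv)$---is exactly the paper's, but the combinatorial dictionary you build it on is false, and in the forward direction this leaves a genuine gap. You claim that an $r$-dimensional cone $\sigma\in\M$ lying in an $r$-dimensional face of $\Eff(X)$ is itself a face of $\Mov(X)$, and conversely that an $r$-dimensional face of $\Mov(X)$ must be a single cone of $\M$. Neither is true in general: the fan $\M$ only \emph{subdivides} $\Mov(X)$, and a face of $\Mov(X)$ can be properly subdivided into several cones of $\M$ of the same dimension. Concretely, let $Y$ be a Mori dream space with $\Nef(Y)\subsetneq\Mov(Y)$ (e.g.\ a smooth weak Fano $3$-fold containing a flopping curve), let $X=Y\times\pr^1$ (again a Mori dream space, with $\Cox(X)=\Cox(Y)\otimes_{\C}\Cox(\pr^1)$), and let $\pi\colon X\to Y$ be the projection, an elementary (hence quasi-elementary) contraction of fiber type with $r=\rho_Y$. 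Its cone is $\sigma=\pi^*(\Nef(Y))\in\M$, contained in the $r$-dimensional face $\pi^*(\Eff(Y))$ of $\Eff(X)$ by Prop.~\ref{sabrina}~$(v)$. But $\sigma$ is \emph{not} a face of $\Mov(X)$: by Rem.~\ref{targetMDS} each composition $h_j\circ\pi$, where $h_j\colon Y\dasharrow Y_j$ runs over the SQMs of $Y$, is a rational contraction of $X$, so the face $\Mov(X)\cap\pi^*(\Eff(Y))$ contains all the $r$-dimensional cones $\pi^*(h_j^*(\Nef(Y_j)))\in\M$ and hence strictly contains $\sigma$; a proper full-dimensional subcone of a cone is never a face of it. The same example refutes the ``relative interior'' argument in your last paragraph: in a fan, two cones \emph{of the fan} with meeting relative interiors coincide, but $\eta_0$ is only a union of fan cones, so nothing forces $\sigma=\eta_0$.

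The damage is asymmetric. Your converse actually survives once the unjustified identification is deleted: all you need is \emph{some} $r$-dimensional cone $\sigma\in\M$ with $\sigma\subseteq\eta_0$ (which exists, since $\eta_0$ is an $r$-dimensional union of finitely many cones of $\M$), and then $\sigma\subseteq\eta_0\subseteq\eta$ with $\dim\eta=r=\dim\sigma$ is already condition $(iv)$ for the contraction corresponding to $\sigma$---whether or not $\sigma=\eta_0$. This is precisely what the paper does. Your forward direction, however, never produces the required face of $\Mov(X)$: asserting that $\sigma$ is one is exactly the false step. The repair is short and is again the paper's argument: since $\tau$ is a face of $\Eff(X)$ and $\Mov(X)\subseteq\Eff(X)$, any supporting functional cutting out $\tau$ from $\Eff(X)$ also supports $\Mov(X)$, so $\eta:=\Mov(X)\cap\tau$ is a face of $\Mov(X)$; from $\sigma\subseteq\eta\subseteq\tau$ and $\dim\sigma=\dim\tau=r$ one gets $\dim\eta=r$, which is the face you need.
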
 
\begin{proof}
Let $f\colon X\dasharrow Y$ be a quasi-elementary rational
contraction with $\rho_Y=r$, and let $\sigma\in\M$ be the
corresponding cone. Then $\dim\sigma=r$, and by
Prop.~\ref{sabrina} $(iv)$, $\sigma$ is contained in a face $\tau$ of
$\Eff(X)$ with $\dim \tau=r$. There exists a
 face $\eta$ of $\Mov(X)$ with
$\sigma\subseteq\eta\subseteq\tau$, and we get $\dim\eta=r$.

Conversely, let $\eta$ be a face of $\Mov(X)$ contained in a face $\tau$
of $\Eff(X)$ with $\dim\eta=\dim\tau=r$. Since $\eta$ is a union of
cones in $\M$, we can choose 
 $\sigma\in\M$ such that $\sigma\subseteq\eta$ and $\dim
\sigma=r$. Then the rational contraction corresponding to
$\sigma$ is quasi-elementary again by Prop.~\ref{sabrina} $(iv)$,
and the target has Picard number $r$. 
\end{proof}
\begin{remark}\label{composition}
Let $X$ be a Mori dream space and $f\colon X\dasharrow Y$ a
quasi-elementary rational contraction. Then $Y$ is a Mori dream space,
and if $g\colon Y\dasharrow Z$ is a
quasi-elementary rational contraction, then $g\circ f\colon
X\dasharrow Z$ is again quasi-elementary.
\end{remark}
\begin{proof}
We first show that $Y$ is $\Q$-factorial. 
Up to replacing $X$ with a SQM, we can assume that $f$ is regular.
Let $D\subset Y$ be a prime
divisor in $Y$, and let $D'\subset X$ be a prime divisor such that $f(D')\subseteq D$. 

If $F\subset X$ is a general fiber of $f$, then $F\cap D'=\emptyset$, so that for every curve $C\subseteq F$ we have $D'\cdot C=0$. This gives $(D')^{\perp}\supseteq \N(F,X)$; on the other hand $\N(F,X)=\ker f_*$ because $f$ is quasi-elementary, so that $[D']\in (\ker f_*)^{\perp}=f^*(\Nu(Y))$ 
(see Rem.~\ref{aggiunta}). Hence there exist a Cartier divisor $B$ in $Y$ and an integer $m\in\mathbb{N}$ such that $mD'=f^*(B)$. This shows that $B$ is effective and has support contained in $f(D')\subseteq D$, 
therefore $B=rD$ for some $r\in\mathbb{N}$. Thus $D$ is $\Q$-Cartier, and $Y$ is $\Q$-factorial.

Now applying  Rem.~\ref{targetMDS}
we see that
$Y$ is a Mori dream space and  $g\circ f\colon
X\dasharrow Z$ is a rational contraction. 
Since both $f$ and $g$ are quasi-elementary,
Prop.~\ref{sabrina} $(v)$ says that
$f^*(\Eff(Y))$ is a face of $\Eff(X)$, and $g^*(\Eff(Z))$ is a face of
$\Eff(Y)$. Then $(g\circ
f)^*(\Eff(Z))$ is a face of $\Eff(X)$, thus $g\circ f$ is
quasi-elementary again by  
 Prop.~\ref{sabrina} $(v)$.
\end{proof}
\section{Non-movable prime divisors in a Fano $4$-fold}\label{leonardo}
\subsection{Fano $4$-folds as Mori dream spaces}\label{mattia}
Let us recollect some well-known results which will be used
in the sequel. We recall that by a $4$-fold we always mean a \emph{smooth}
$4$-dimensional algebraic variety. 
\begin{thm}[see \cite{AWaview}, Th.~4.1 and
    references therein]\label{smallfibers}
Let $X$ be a quasi-projective $4$-fold and $f\colon X\to Y$ a local
contraction such that $-K_X$ is $f$-ample. Assume that every fiber of
$f$ has dimension at most $1$. Then $Y$ is smooth and $f$ is either
the blow-up of a smooth surface in $Y$, or a conic bundle.
\end{thm}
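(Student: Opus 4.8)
The plan is to run the argument separately according to the dimension of the target. Since every fiber has dimension at most $1$ and $\dim X=4$, we have $\dim Y\in\{3,4\}$; the case $\dim Y=4$ is birational, while $\dim Y=3$ is of fiber type (the general fiber being a curve). Because $-K_X$ is $f$-ample, the relative cone $\overline{\NE}(X/Y)$ is $K$-negative and, by the relative Cone and Contraction Theorems, rational polyhedral; since the statement is local over $Y$ and the contracted curves all sit inside the one-dimensional fibers, I would first reduce to the case where $f$ is an elementary (extremal) contraction, so that one may speak of the length $\ell:=\min\{-K_X\cdot C\}$ taken over rational curves $C$ in the fibers, and apply Wi\'sniewski's inequality $\dim\Exc(f)+\dim F\geq \dim X+\ell-1$ to any positive-dimensional fiber $F$.

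\emph{Birational case} ($\dim Y=4$). First I would rule out that $f$ is small: if $\Exc(f)$ had codimension $\geq 2$, hence $\dim\Exc(f)\leq 2$, then the above inequality applied to a one-dimensional fiber $F$ would give $\dim F\geq 4+\ell-1-2\geq 2$, contradicting the hypothesis. Thus $\Exc(f)=E$ is a prime divisor, $\dim E=3$, and the same inequality forces $\ell=1$. Every positive-dimensional fiber is then a rational curve $C\cong\pr^1$ with $-K_X\cdot C=1$; adjunction gives $\deg N_{C/X}=-1$ and, in the expected model, $N_{C/X}\cong\mathcal{O}\oplus\mathcal{O}\oplus\mathcal{O}(-1)$, so that $H^1(C,N_{C/X})=0$ and these curves move unobstructedly in a two-dimensional family sweeping out $E$. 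Following Ando, one then identifies $S:=f(E)$ as a smooth surface, $E\to S$ as a $\pr^1$-bundle, and $f$ as the blow-up of $Y$ along $S$, with $Y$ smooth.

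\emph{Fiber-type case} ($\dim Y=3$). Here every fiber is exactly one-dimensional, so $f$ is equidimensional, and the general fiber is an irreducible rational curve $C$ with $-K_X\cdot C=2$. Once $Y$ is shown to be smooth, equidimensionality together with the Cohen--Macaulayness of $X$ yields flatness of $f$; the sheaf $f_*\mathcal{O}_X(-K_X)$ is then locally free of rank $3$, and the relative anticanonical morphism embeds $X$ fiberwise as a conic into the $\pr^2$-bundle $\pr(f_*\mathcal{O}_X(-K_X))$ over $Y$, exhibiting $f$ as a conic bundle.

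In both cases the genuinely nontrivial point, and the step I expect to be the main obstacle, is the smoothness of $Y$ together with the identification of the precise local model around a one-dimensional fiber: this cannot be settled by numerical invariants alone and requires the deformation theory of the fiber $\pr^1$'s --- controlling $H^1$ of their normal bundles, showing that the resulting family is unobstructed and sweeps out the expected locus, and reading off from it the local structure of the contraction. I would carry this out following Ando's analysis and its refinements by Wi\'sniewski, which constitute the technical heart of the statement.
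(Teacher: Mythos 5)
The paper offers no proof of Th.~\ref{smallfibers}: it is stated as a known result, quoted from \cite{AWaview}, Th.~4.1 ``and references therein'' (the underlying theorems being due to Ando, Wi\'sniewski, and Andreatta--Wi\'sniewski). So the only meaningful comparison is with that literature, and your outline does reproduce its standard architecture: the dichotomy on $\dim Y$, Wi\'sniewski's inequality to exclude a small exceptional locus and to force length $\ell=1$ in the birational case, deformation theory of the fiber curves to identify the blow-up structure, and equidimensionality plus miracle flatness plus the relative anticanonical map in the conic-bundle case. Since you end by deferring the ``technical heart'' to Ando and Wi\'sniewski, your proposal is in substance the same appeal to the literature that the paper itself makes, preceded by a broadly accurate sketch of what that literature does.

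Two steps in your sketch deserve a flag, because they are exactly where the deferred content sits. First, the reduction to an elementary contraction is not free: the theorem is stated for an arbitrary local contraction with $-K_X$ $f$-ample, and while the relative cone theorem lets you factor $f$ through an elementary contraction $g\colon X\to Z$, you then need the hypotheses to be inherited by $Z\to Y$ (smoothness of $Z$ comes from the theorem applied to $g$, but relative ampleness of $-K_Z$ over $Y$ requires an argument, since $-K_Z\cdot g_*C=(-K_X+E)\cdot C$ can a priori fail to be positive for curves inside $\Exc(g)$); this induction on $\rho(X/Z)$ is genuinely part of the proof, and in fact Andreatta--Wi\'sniewski's treatment works directly with local contractions via relative vanishing and slicing rather than by such a reduction. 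Second, the assertion $N_{C/X}\cong\mathcal{O}\oplus\mathcal{O}\oplus\mathcal{O}_{\pr^1}(-1)$ is not forced by the numerics: $\deg N_{C/X}=-1$ equally allows $\mathcal{O}_{\pr^1}(1)\oplus\mathcal{O}_{\pr^1}(-1)^{\oplus 2}$, which also has vanishing $H^1$ and the same $h^0$, so unobstructedness alone does not pin down the local model; ruling out the other splitting types and deducing smoothness of $Y$ is precisely the content of Ando's theorem. Neither point is a refutation --- you explicitly acknowledge them as the main obstacle --- but they mean your text, like the paper's, is ultimately a citation rather than a self-contained proof.
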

\begin{thm}\label{components}
Let $X$ be a projective variety with canonical singularities and $K_X$
Cartier, $f\colon X\to Y$ a $K$-negative contraction with $\dim
X-\dim Y\leq 1$, and $F\subset X$ an isolated $2$-dimensional fiber of
$f$.

Let $T$ be a $2$-dimensional irreducible component of $F_{red}$. Then the
possibilities for $(T,-K_{X|T})$ are the following:
\begin{enumerate}[$(i)$]
\item $(\pr^2,\mathcal{O}_{\pr^2}(e))$ with
  $e=1,2$;
 \item $(S_r,\mathcal{O}_{S_r}(1))$ with $r\geq 2$;
\item $(\mathbb{F}_r,C_0+mB)$ with $r\geq 0$,
  $m\geq r+1$.
\end{enumerate}
Here $S_r$ is the cone over a rational normal curve of degree $r$,
$B\subset\mathbb{F}_r$ is a fiber of the $\pr^1$-bundle, and 
$C_0\subset\mathbb{F}_r$ is a section of the $\pr^1$-bundle with
$C_0^2=-r$.

If moreover $X$ is smooth, then every irreducible component of $F$ has
dimension $2$.
\end{thm}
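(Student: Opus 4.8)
The plan is to reduce the statement to a classification of the polarized surface $(T,L)$ with $L:=-K_{X|T}$, exploiting that $T$ carries a covering family of rational curves of low anticanonical degree. First I would observe that, since $-K_X$ is $f$-ample and $T$ lies in the fiber $F$, which is contracted to a point, the restriction $L=-K_{X|T}$ is an ample Cartier divisor on $T$; in particular $L\cdot C=-K_X\cdot C>0$ for every curve $C\subseteq T$. As $f$ is $K$-negative, $T$ is covered by rational curves contracted by $f$, so through a general point of $T$ there passes such a curve; I set $\ell:=\min\{-K_X\cdot C\}$, the minimal anticanonical degree, and fix a covering family of minimal degree $\ell$ with general member $C$.

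The first key step is a degree bound. By the standard dimension estimate for the locus swept out, through a fixed general point $x\in T$, by a minimal covering family (a consequence of Mori's bend-and-break), that locus has dimension at least $-K_X\cdot C-1$; since the curves stay inside the $2$-dimensional fiber $F$, this gives $-K_X\cdot C-1\leq 2$, i.e.\ $\ell\leq 3$, with equality forcing the curves through $x$ to fill up $T$. The hypotheses that $\dim X-\dim Y\leq 1$ (so the general fiber of $f$ is at most a curve) and that $F$ is an \emph{isolated} $2$-dimensional fiber (so the curves deform only inside $F$) are what keep the family vertical and confine it to $F$; Th.~\ref{smallfibers} is the complementary description of the locus where all fibers have dimension at most $1$.

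With a covering family of rational curves of bounded degree, $T$ is uniruled, hence a rational surface of a very restricted type, and the family exhibits the structure. When $\ell=1$ the curves have $L$-degree $1$ and form a ruling: either they all pass through one point, forcing $T$ to be the cone $S_r$ over a rational normal curve with $L=\mathcal{O}_{S_r}(1)$; or they give a genuine $\pr^1$-bundle, so $T\cong\mathbb{F}_r$ with $L=C_0+mB$; or $T\cong\pr^2$ with $L=\mathcal{O}_{\pr^2}(1)$ and the curves the lines. When $\ell=2$ one is forced into $T\cong\pr^2$ with $L=\mathcal{O}_{\pr^2}(2)$. The admissible polarizations are then pinned down by two inputs: ampleness of $L$, which on $\mathbb{F}_r$ forces $m\geq r+1$; and the relation between $L=-K_{X|T}$ and $K_T$ furnished by (sub)adjunction, reflecting the negativity of $T$ inside $X$. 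This second input bounds $L$ from above and excludes the over-positive polarizations (in particular it rules out the borderline degree $\ell=3$, i.e.\ $(\pr^2,\mathcal{O}_{\pr^2}(e))$ with $e\geq 3$), while the requirement that the singularities of $T$ be compatible with $X$ having only canonical singularities forces the singular case to be exactly the cone $S_r$ and no worse.

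I expect the main obstacle to be precisely this (sub)adjunction analysis on the possibly singular surface $T$: relating the ample class $-K_{X|T}$ to $-K_T$ through the conormal sheaf of $T$ in $X$ (which is no longer that of a divisor once $\dim X\geq 4$), controlling the contribution at the vertex in the cone case, and checking that the canonical-singularity hypothesis leaves no polarization outside the listed ones. For the final assertion, with $X$ smooth, I would prove purity of $F$ by a local argument: if a component of $F$ had dimension less than $2$, a point of it would fail to lie on any $2$-dimensional component, but smoothness of $X$ together with the covering of $F$ by the contracted rational curves forces every point of $F$ to lie on such a component, so no curve or isolated point can occur as a separate component.
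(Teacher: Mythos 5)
Your proposal has a genuine gap at its core, and it is worth noting first that the paper itself does not prove this statement from scratch: its proof is a reduction to the literature. The birational case is quoted from \cite[Th.~1.19 and Prop.~4.3.1]{AWaview}, and the fiber-type case is reduced to those same results via \cite[Th.~2.6]{mella}, which supplies a neighbourhood $Y_0$ of $f(F)$ on which $-K_X$ is $f$-spanned. The cited classifications (see also \cite{kachi} for contractions of $4$-folds to $3$-folds) are long, technical theorems whose engine is relative base-point freeness of the fundamental divisor $-K_X$ and slicing by general members of $|-K_X|$; that is what actually pins down the short list $(i)$--$(iii)$.

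Your sketch reproduces only the easy half of this. Bend-and-break confined to the $2$-dimensional fiber does give a covering family of rational curves with $-K_X\cdot C\leq 3$, but the class of polarized surfaces $(T,L)$ carrying a covering family of rational curves of $L$-degree $\leq 2$ (let alone $\leq 3$) is far larger than the theorem's list: it contains $(S_r,\mathcal{O}_{S_r}(2))$, $(\mathbb{F}_r,2C_0+mB)$, every Del Pezzo surface polarized by $-K_T$ (covered by conics), $(\pr^2,\mathcal{O}_{\pr^2}(3))$, and so on. Hence your assertion that ``$\ell=2$ forces $T\cong\pr^2$ with $L=\mathcal{O}_{\pr^2}(2)$'' is false at the level of abstract polarized surfaces; the exclusions must come from the embedding $T\subset X$ and the structure of $f$ near $F$. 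The tool you propose for this, a ``(sub)adjunction'' relating $-K_{X|T}$ to $K_T$, is not available in usable form: $T$ has codimension $\geq 2$ in $X$, may be singular, and its normal sheaf is precisely what one cannot control a priori -- controlling it is equivalent to the hard part of the theorem, which is why the actual proofs go through $f$-spannedness of $-K_X$ and slicing instead. You correctly identify this as ``the main obstacle,'' but flagging the obstacle is not overcoming it. The final purity claim for smooth $X$ has the same defect: a $1$-dimensional component of $F$ is itself covered by contracted rational curves, so the mere fact that $F$ is covered by such curves cannot force every point of $F$ to lie on a $2$-dimensional component; this statement, too, is part of the cited structure theory rather than a consequence of the covering argument.
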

\begin{proof} If $f$ is birational, this is
 \cite[Th.~1.19 and Prop.~4.3.1]{AWaview}. When $f$ is of fiber type, 
  by \cite[Th.~2.6]{mella} there exists a non-empty open neighbourhood $Y_0$
  of $f(F)$ such that  $-K_X$ is $f$-spanned on $f^{-1}(Y_0)$. Then 
\cite[Th.~1.19 and Prop.~4.3.1]{AWaview} still apply.
\end{proof}
Let $X$ be a projective $4$-fold.
An {\bf exceptional plane} in $X$ is a closed subset $L\subset X$
such that $L\cong\pr^2$ and
$\mathcal{N}_{L/X}\cong\mathcal{O}_{\pr^2}(-1)^{\oplus 2}$. 
Notice that if 
$C_L\subset L$ is a line, we have $-K_X\cdot C_L=1$.
An
{\bf exceptional line} in $X$ is a curve $l\cong\pr^1$ with
$\mathcal{N}_{l/X}\cong\mathcal{O}_{\pr^1}(-1)^{\oplus 3}$; notice
that $K_X\cdot l=1$.
\begin{thm}[\cite{kawsmall}]\label{flip}
Let $X$ be a projective $4$-fold and $f\colon X\dasharrow\w{X}$ a
$K$-negative
flip. Then  $\w{X}$ is smooth,
$X\smallsetminus\dom(f)$ is the disjoint union of $r\geq 1$ 
exceptional planes, and  $\w{X}\smallsetminus\dom(f^{-1})$
is the disjoint union of $r$ exceptional lines. 

Moreover $f$ factors as $h\circ g^{-1}$, where
 $g\colon \widehat{X}\to X$ is the
blow-up of $X\smallsetminus\dom(f)$, and  $h\colon\widehat{X}\to \w{X}$ is the
blow-up of $\w{X}\smallsetminus\dom(f^{-1})$.
\end{thm}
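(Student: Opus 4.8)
The plan is to recover the flip from its defining small contraction and to describe the flipping locus completely, in the spirit of Kawamata. Write $f=\w{f}\circ\phi^{-1}$, where $\phi\colon X\to Y$ is a small, $K$-negative elementary contraction and $\w{f}\colon\w{X}\to Y$ is its flipped contraction, which is $K_{\w{X}}$-positive. All the geometry is encoded in $\phi$: the locus $X\smallsetminus\dom(f)$ is exactly $\Exc(\phi)$, while $\w{X}\smallsetminus\dom(f^{-1})=\Exc(\w{f})$. So I would first analyze $\phi$, then read off the flipped side.

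First I would fix the dimensions. Since $\phi$ is small, $\Exc(\phi)$ has codimension at least $2$, hence $\dim\Exc(\phi)\leq 2$; and since $X$ is smooth with $K_X$ Cartier, the length $\ell$ of the ray $\NE(\phi)$ is a positive integer. Applying the fiber-locus (Ionescu--Wi\'sniewski) inequality $\dim F+\dim E\geq\dim X+\ell-1$ to a nontrivial fiber $F$ inside a component $E$ of $\Exc(\phi)$ gives $\dim F+\dim E\geq 3+\ell\geq 4$, while $\dim F\leq\dim E\leq 2$; equality must hold throughout, forcing $\ell=1$ and $\dim F=\dim E=2$. Thus $\phi(E)$ is a point, so $\Exc(\phi)$ is a disjoint union of finitely many two-dimensional fibers $F_1,\dots,F_r$, each contracted to a distinct point, and every contracted curve has anticanonical degree $1$.

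Next I would identify each fiber, and this is where the main obstacle lies. Since $\dim X-\dim Y=0\leq 1$ and $\phi$ is $K$-negative, Theorem~\ref{components} applies to each two-dimensional irreducible component $T$ of a fiber: $(T,-K_{X|T})$ is either $(\pr^2,\mathcal{O}_{\pr^2}(e))$, or $T$ is a cone over a rational normal curve, or a Hirzebruch surface. The condition $\ell=1$ immediately excludes $e=2$. The hard part is to rule out the cone and Hirzebruch cases and to prove that each fiber is a single copy of $\pr^2$: here one uses that $X$ is \emph{smooth} and that $\phi$ is \emph{small}. For instance, on a Hirzebruch component the two generating curve classes both lie in the one-dimensional ray $\NE(\phi)$, hence are proportional in $\N(X)$, which together with a local study of how $T$ sits in the smooth $4$-fold leads to a contradiction; the cone case is excluded similarly through the behaviour of its vertex, and a separate intersection computation shows distinct components of a fiber cannot meet. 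Granting this, each fiber is a plane $L_i\cong\pr^2$ with $-K_{X|L_i}=\mathcal{O}_{\pr^2}(1)$. Adjunction then gives $\det\mathcal{N}_{L_i/X}=\mathcal{O}_{\pr^2}(-2)$, the rigidity of the contracted fiber gives $H^0(\mathcal{N}_{L_i/X})=0$, and the fact that the restriction of $\mathcal{N}_{L_i/X}$ to every (contracted) line is balanced forces, by uniformity, $\mathcal{N}_{L_i/X}\cong\mathcal{O}_{\pr^2}(-1)^{\oplus 2}$. Hence the $L_i$ are exactly exceptional planes.

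Finally I would construct the flip explicitly and extract $\w{X}$. Let $g\colon\widehat{X}\to X$ be the blow-up of the disjoint exceptional planes $L_1,\dots,L_r$. Because $\mathcal{N}_{L_i/X}\cong\mathcal{O}(-1)^{\oplus 2}$, the exceptional divisor over $L_i$ is $\pr(\mathcal{N}_{L_i/X})\cong\pr^2\times\pr^1$, a smooth divisor in the smooth $4$-fold $\widehat{X}$ whose second projection exhibits a family of $\pr^2$'s over $\pr^1$. I would then check that these $\pr^2$'s can be contracted in a smooth blow-down $h\colon\widehat{X}\to\w{X}$ collapsing each $\pr^2\times\pr^1$ onto a curve $l_i\cong\pr^1$ with $\mathcal{N}_{l_i/\w{X}}\cong\mathcal{O}_{\pr^1}(-1)^{\oplus 3}$; equivalently, $\w{X}$ is smooth and $h$ is the blow-up of the disjoint exceptional lines $l_1,\dots,l_r$. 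One verifies that $h\circ g^{-1}\colon X\dasharrow\w{X}$ is an isomorphism in codimension one, fits into the diagram over $Y$, and makes $K_{\w{X}}$ positive on $\w{f}$-fibers; by the uniqueness of flips this map is $f$. This simultaneously yields that $\w{X}$ is smooth, the factorization $f=h\circ g^{-1}$, and the description of $X\smallsetminus\dom(f)$ and $\w{X}\smallsetminus\dom(f^{-1})$ as disjoint unions of $r$ exceptional planes and $r$ exceptional lines. The genuinely difficult step remains the classification in the previous paragraph; the dimension count and the blow-up/blow-down construction are comparatively routine.
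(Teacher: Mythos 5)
First, a point of comparison: the paper itself does not prove this statement at all --- it is quoted verbatim from Kawamata \cite{kawsmall}, and the entire content of that paper is essentially the classification step that your outline defers. Your first step (the Ionescu--Wi\'sniewski inequality forcing $\ell=1$ and forcing every component of $\Exc(\phi)$ to be an irreducible two-dimensional fiber) and your last step (blowing up planes with normal bundle $\mathcal{O}_{\pr^2}(-1)^{\oplus 2}$, recognizing the exceptional divisors as $\pr^2\times\pr^1$, blowing down the other ruling onto curves with normal bundle $\mathcal{O}_{\pr^1}(-1)^{\oplus 3}$, and invoking uniqueness of the flip) are sound and standard, modulo two repairable points: projectivity of the blow-down should be obtained by contracting the $K$-negative extremal ray of $\overline{\NE}(\widehat{X}/Y)$ spanned by lines in the $\pr^2$-fibers (not by an analytic blow-down), and $\ell=1$ only says the \emph{minimal} anticanonical degree of a contracted curve is $1$, not that every contracted curve has degree $1$.

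The genuine gap is the middle step, which you explicitly concede (``Granting this\ldots''). That each fiber is an irreducible $\pr^2$ with $-K_{X|T}=\mathcal{O}_{\pr^2}(1)$ rather than a cone $S_r$ or a Hirzebruch surface, that distinct components are disjoint, and that $\mathcal{N}_{L/X}\cong\mathcal{O}_{\pr^2}(-1)^{\oplus 2}$, is precisely Kawamata's theorem; Th.~\ref{components} by itself does not give it. Moreover, the two sub-arguments you sketch would not work as stated. On \emph{any} two-dimensional fiber component all curve classes are automatically proportional in $\N(X)$, since they lie in the single contracted ray; so observing that the two generating classes of $\mathbb{F}_r$ become proportional yields no contradiction whatsoever --- the exclusion must come from a finer local analysis, which is the hard part. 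Likewise, the normal-bundle argument is circular: ``uniformity'' applies only once you know the restriction of $\mathcal{N}_{L/X}$ to \emph{every} line is $\mathcal{O}(-1)^{\oplus 2}$, and the constraints you actually establish ($\det\mathcal{N}_{L/X}=\mathcal{O}_{\pr^2}(-2)$ and $H^0(\mathcal{N}_{L/X})=0$) do not force this: any stable rank-$2$ bundle on $\pr^2$ with $c_1=-2$ satisfies both, has balanced splitting only on general lines, and has unbalanced jumping lines, so it is not uniform and not $\mathcal{O}(-1)^{\oplus 2}$. Ruling such bundles out requires genuinely more input --- contractibility of $L$ to a point (Grauert-type conditions on the formal neighbourhood), or Kawamata's deformation-theoretic and vanishing arguments. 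So your proposal is an accurate roadmap of the known proof, but the one step you yourself label ``the genuinely difficult step'' is exactly the step that is missing, and what stands in its place would fail.
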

\begin{remark}\label{intersection}
Let $f\colon X\dasharrow\w{X}$ be as in Th.~\ref{flip}, $C\subset
X\smallsetminus\dom(f)$ a line in an exceptional plane, and
$l\subseteq\w{X}\smallsetminus\dom(f^{-1})$ an
 exceptional line. Let $D$ be
a divisor in $X$ and $\w{D}$ its transform in $\w{X}$. Then $D\cdot
C=-\w{D}\cdot l$. This follows easily from the factorization of $f$
as $h\circ g^{-1}$, by comparing $g^*(D)$ and $h^*(\w{D})$ in $\widehat{X}$.
\end{remark}
In this paper, our interest in Mori dream spaces is motivated by the
following result.
\begin{thm}[\cite{BCHM}, Cor.~1.3.2]
Let $X$ be a Fano manifold. Then $X$ is a Mori dream space.
\end{thm}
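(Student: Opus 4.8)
The plan is to deduce this from the Hu--Keel characterization already recalled in the proof of Remark~\ref{targetMDS}: a $\Q$-factorial projective variety with finitely generated Picard group is a Mori dream space if and only if its Cox ring is a finitely generated $\C$-algebra (\cite[Prop.~2.9]{hukeel}). So the task reduces to checking the standing hypotheses and then proving finite generation of $\Cox(X)$.

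First I would verify the hypotheses. A Fano manifold $X$ is smooth and projective, hence $\Q$-factorial. Since $-K_X$ is ample, Kodaira vanishing gives $H^i(X,\mathcal{O}_X)=0$ for $i>0$; in particular $H^1(X,\mathcal{O}_X)=H^2(X,\mathcal{O}_X)=0$, so the exponential sequence yields $\Pic(X)\cong H^2(X,\Z)$, which is finitely generated. Fixing divisors $D_1,\dots,D_r$ whose classes generate $\Pic(X)$, one forms the multigraded Cox ring $\Cox(X)=\bigoplus_{m\in\Z^r}H^0(X,m_1D_1+\dots+m_rD_r)$, and everything comes down to showing this ring is finitely generated.

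The heart of the argument is to reduce this to a finite generation statement for adjoint rings and then invoke \cite{BCHM}. Because $-K_X$ is ample, $(X,0)$ is a klt pair of Fano type: any effective class $\alpha$ can be represented as $K_X+\Delta$ with $\Delta=\alpha+(-K_X)$ big, and using the ampleness room together with the smoothness of $X$ one may choose representatives making $(X,\Delta)$ klt. Since $\Eff(X)$ is covered by finitely many rational polyhedral subcones, on each of which the relevant section ring has adjoint type $R(X;K_X+\Delta_1,\dots,K_X+\Delta_k)$, finite generation of $\Cox(X)$ follows once one knows each such adjoint ring is a finitely generated $\C$-algebra. This last statement is exactly the main finite generation theorem of \cite{BCHM} for klt pairs with big boundary.

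The main obstacle is precisely this finite generation of adjoint rings, which is the deepest content of \cite{BCHM} and rests on the full machinery of the minimal model program: the existence of flips (proved by induction on dimension via finite generation of the restricted algebras on the flipping locus) together with termination of the $(K_X+\Delta)$-MMP with scaling of an ample divisor for pairs with big boundary. Alternatively, one could bypass the Cox ring and verify conditions $(i)$ and $(ii)$ of the definition of Mori dream space directly: running the $(K_X+\Delta)$-MMP with scaling for suitable big $\Delta$ produces, by termination, the finitely many small $\Q$-factorial modifications $f_j\colon X\dasharrow X_j$ and exhibits $\Mov(X)$ as the union of the pulled-back nef cones $f_j^*(\Nef(X_j))$, each polyhedral and generated by semiample classes. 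This is the Mori chamber decomposition, and it draws on the same MMP input as the finite generation route.
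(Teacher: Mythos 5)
The paper offers no proof of this statement: it is quoted verbatim as Cor.~1.3.2 of \cite{BCHM}, so there is no internal argument to compare yours against, and any legitimate proof must, as yours does, rest on the main theorems of that reference. Your sketch is in substance the deduction given in the cited source itself: reduce to finite generation of $\Cox(X)$ via the Hu--Keel criterion (\cite[Prop.~2.9]{hukeel} --- the same criterion this paper invokes in the proof of Remark~\ref{targetMDS}), use ampleness of $-K_X$ to write suitable rescalings of generating divisors adjointly as $K_X+\Delta$ with $(X,\Delta)$ klt and $\Delta$ big, and then apply the finite generation of adjoint rings from \cite{BCHM}; this is not circular, since that finite generation theorem is logically prior to Cor.~1.3.2 there. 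Two points would need tightening in a full write-up. First, your phrase that ``$\Eff(X)$ is covered by finitely many rational polyhedral subcones'' on which the section rings are adjoint cannot be a starting point, because polyhedrality of $\Eff(X)$ is part of the conclusion; the standard fix is to take finitely many divisors whose non-negative span contains $\Eff(X)$ for trivial reasons (e.g.\ $\pm D_1,\dotsc,\pm D_r$, spanning all of $\Nu(X)$), rescale each so it is $\Q$-linearly equivalent to $K_X$ plus an ample $\Q$-divisor, and compare the resulting $\mathbb{N}^{2r}$-graded adjoint ring with the $\Z^r$-graded Cox ring. Second, klt-ness of the pairs should come from perturbing by a \emph{general} member of an ample $\Q$-linear system (Bertini on the smooth $X$), not from the divisor $\alpha+(-K_X)$ attached to an arbitrary effective class $\alpha$, whose $\Q$-linearly equivalent representatives may all be singular along the stable base locus. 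With these routine adjustments your outline is the standard, correct argument, and it correctly isolates the genuinely deep input (existence and termination of the relevant MMP with scaling, equivalently finite generation) as a black box from \cite{BCHM} rather than smuggling it in.
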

\noindent We are now going to explain some elementary properties of SQMs and
of rational contractions of Fano $4$-folds.
\begin{remark}\label{SQM}
Let $X$ be a Fano $4$-fold and $f\colon
X\dasharrow \w{X}$ a SQM. Then $\w{X}$ is smooth,
$X\smallsetminus\dom(f)$ is the disjoint union of $r$ 
exceptional planes, and  $\w{X}\smallsetminus\dom(f^{-1})$
is the disjoint union of $r$ exceptional lines.  

Moreover if ${C}\subset\w{X}$ is an irreducible curve such that
$C\cap\dom(f^{-1})\neq\emptyset$,  
and $C_X\subset X$ is its transform, we have 
$$-K_{\w{X}}\cdot{C}\geq -K_X\cdot C_X+s\geq 1+s\geq 1,$$
where $s$ is the number of points of ${C}$ 
which belong to an exceptional line.
In particular: 
\begin{enumerate}[(1)]
\item
if $-K_{\w{X}}\cdot{C}=1$, then ${C}$ does not
intersect any exceptional line; in general we have:
$$s\leq -K_{\w{X}}\cdot C-1;$$
\item for every irreducible curve $C\subset\w{X}$, either
  $-K_{\w{X}}\cdot C>0$ (if $C\cap\dom(f^{-1})\neq\emptyset$), or $C$
  is an exceptional line (if $C\cap\dom(f^{-1})=\emptyset$);
\item if $L\subset \w{X}$ is an exceptional plane and $l\subset \w{X}$
  is an exceptional line, then $L\cap l=\emptyset$.
\end{enumerate}
\end{remark}
\begin{proof}
The statement is trivial if $f$ is an isomorphism. 
Otherwise, let $\w{D}$ be an ample divisor in $\w{X}$, and
$D:=f^*(\w{D})$. Then $D$ is a movable divisor in $X$, and any Mori
program for $D$ yields a factorization of $f$ as a sequence of flips.
Applying \cite[Prop.~2.4]{codim}, we can 
  factor $f$ as a sequence of $m\geq 1$
$K$-negative flips.
In this way we get a factorization:
$$X\stackrel{f'}{\dasharrow}
X'\stackrel{f_m}{\dasharrow}\w{X},$$
where $f'$ is the composition of  the first $m-1$ flips, and
$f_m$ is the last one.  By induction, we can assume 
that the statement holds for $f'\colon X\dasharrow X'$.

Since $X'$ is smooth and $f_m$ is a $K$-negative flip,   
we can apply Th.~\ref{flip}; in particular, $\w{X}$ is smooth. Moreover 
$X'\smallsetminus\dom(f_m)$ is the disjoint union of $t$
exceptional planes, and $\w{X}\smallsetminus\dom(f_m^{-1})$ 
is the disjoint union of $t$ exceptional
lines.
By the induction hypothesis, an exceptional plane and an exceptional
line in $X'$ cannot meet, therefore the
indeterminacy locus of $f_m$ is disjoint from the indeterminacy
locus of $(f')^{-1}$.

We have a factorization 
$$\xymatrix{&{\widehat{X}}\ar[dl]_{g}\ar[dr]^{h}&\\
{X'}\ar@{-->}[rr]^{f_m}&&{\w{X}}}$$
where $\widehat{X}$ is smooth,
$g$ is the blow-up of $X'\smallsetminus\dom(f_m)$, and $h$ is the blow-up of 
$\w{X}\smallsetminus\dom(f_m^{-1})$. If
$E_1,\dotsc,E_t\subset\widehat{X}$ are the exceptional divisors, we
have
$$h^*(-K_{\w{X}})=g^*(-K_{X'})+\sum_{i=1}^tE_i.$$

Consider now an irreducible curve $C\subset\w{X}$ such that
$C\cap\dom(f^{-1})\neq\emptyset$, 
 and let $C_X\subset X$, $C'\subset X'$, and
$\widehat{C}\subset\widehat{X}$  be its
transforms. Suppose that $C'$ has $s'$ points belonging to an
exceptional line. Then 
$-K_{X'}\cdot C'\geq -K_X\cdot C_X+s'$ by induction, and
$C$ meets the indeterminacy locus of
$f_m^{-1}$ in
$s-s'$ points, so we get
$$-K_{\w{X}}\cdot C=-K_{X'}\cdot
C'+\sum_{i=1}^tE_i\cdot\widehat{C}\geq -K_X\cdot C_X+s'+(s-s'),$$
which gives the statement.
\end{proof}
\begin{remark}\label{ratsing}
Let $X$ be a Fano $4$-fold and $f\colon X\dasharrow Y$ a rational
contraction. Then there exists a factorization of $f$ as
$$\xymatrix{
X\ar@{-->}[r]\ar@{-->}[dr]_{f}&{\widetilde{X}}\ar[d]^{\w{f}}\\
& Y}$$
where $X\dasharrow\w{X}$ is a SQM, $\w{X}$ is smooth, 
and $\w{f}$ is a $K$-negative contraction; in
particular,  $Y$ has 
rational singularities.
\end{remark}
\begin{proof}
Consider a factorization  
$f=g_1\circ h_1$ where $h_1\colon X\dasharrow X_1$ is a
SQM 
and $g_1\colon X_1\to Y$ a
contraction.  
If $g_1$ is not $K$-negative, there exists an extremal ray $\sigma$
of $\NE({X}_1)$ such that $K_{{X}_1}\cdot \sigma\geq 0$ and
$\sigma\subseteq\NE(g_1)$. By Rem.~\ref{SQM} (2), 
$\Lo(\sigma)$ is the union of finitely many exceptional lines;
let $h_2$ be the composition of $h_1$ with the flip of
$\sigma$, and $g_2:=f\circ(h_2)^{-1}$.
$$\xymatrix{
{{X}} \ar@/^1pc/@{-->}[rr]^{h_2}\ar@{-->}[r]_{h_1}\ar@{-->}[rd]_f 
&{X_1}\ar@{-->}[r]\ar[d]^{g_1}&{X_2}\ar[dl]^{g_2}\\
&Y&}$$
Then $g_2$ is a morphism and $f=g_2\circ h_2$.
Moreover
the number of connected
components of $X\smallsetminus\dom(h_2)$ is strictly smaller than
the number of connected components of
$X\smallsetminus\dom(h_1)$. 
Proceeding in this way, after finitely many steps we get a
factorization $f=g_m\circ h_m$ where $g_m$ is $K$-negative. Finally,  $Y$ has 
rational singularities by \cite[Cor.~7.4]{kollarhigher}.
\end{proof}
\begin{corollary}\label{target}
Let $X$ be a Fano $4$-fold and $f\colon X\dasharrow Y$ a
quasi-elementary rational contraction. Then $Y$ is a Mori dream space and
moreover:
\begin{enumerate}[$\bullet$]
\item $Y$ is smooth if $\dim Y=2$;
\item $Y$ has at most isolated canonical and factorial
singularities if $\dim Y=3$. 
\end{enumerate}
\end{corollary}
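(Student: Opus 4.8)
The first assertion, that $Y$ is a Mori dream space, is exactly the content of Rem.~\ref{composition}. For the statements on singularities the plan is to reduce to a regular contraction and then bound the dimensions of the fibres. By Rem.~\ref{ratsing} I would factor $f$ as $X\dasharrow\w{X}\stackrel{\w{f}}{\to}Y$, where $X\dasharrow\w{X}$ is a SQM, $\w{X}$ is smooth, and $\w{f}$ is a $K$-negative contraction; since $f$ is quasi-elementary so is $\w{f}$. Replacing $X$ by $\w{X}$, I may therefore assume that $f\colon X\to Y$ is a regular, $K$-negative, quasi-elementary contraction with $X$ a smooth $4$-fold. By the proof of Rem.~\ref{composition} the target $Y$ is $\Q$-factorial, and by Rem.~\ref{ratsing} it has rational singularities. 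It remains to treat the two cases $\dim Y=2$ and $\dim Y=3$ in the statement.

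The key intermediate step is to control the fibre dimensions. Let $F\subset X$ be a general fibre of $f$. I claim that $f$ contracts no prime divisor $D$ with $\codim_Y f(D)\geq 2$. Indeed, by Prop.~\ref{sabrina} quasi-elementarity gives $\N(F,X)=\ker f_*$, and by Lemma~\ref{MFS} this space is spanned by classes of curves contained in $F$. If such a $D$ existed, a general $F$ would be disjoint from $D$, so $D\cdot C=0$ for every curve $C\subset F$, whence $[D]\in(\ker f_*)^{\perp}=f^*(\Nu(Y))$ by Rem.~\ref{aggiunta}. Since $Y$ is $\Q$-factorial this yields $mD\equiv f^*(B)$ for some $m>0$ and some effective $B$ with $\Supp(B)\subseteq f(D)$; as $f(D)$ has codimension at least $2$ this forces $B=0$ and $D\equiv 0$, absurd for a nonzero effective divisor. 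Applying this to the components of the preimages of the jumping loci, I conclude: if $\dim Y=2$ then $f$ is equidimensional with all fibres of dimension $2$, while if $\dim Y=3$ the locus over which the fibre has dimension $2$ is finite.

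For $\dim Y=3$ one has $\dim X-\dim Y=1$, so by Rem.~\ref{pippo} $f$ is elementary. Over the open set where all fibres have dimension at most $1$, Th.~\ref{smallfibers} applies and shows that $f$ is a conic bundle with $Y$ smooth there. Over each of the finitely many remaining points the fibre is $2$-dimensional, with all components of dimension $2$ and of the type listed in Th.~\ref{components}; in particular the singularities of $Y$ are isolated, and they are rational (Rem.~\ref{ratsing}) with $Y$ being $\Q$-factorial. To finish I would check that $K_Y$ is Cartier near each such point, using the conic-bundle structure on a punctured neighbourhood together with the classification of the $2$-dimensional fibre; rational plus Gorenstein then gives canonical, and an isolated, rational, Gorenstein, $\Q$-factorial $3$-fold singularity is factorial.

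For $\dim Y=2$ the map $f$ is an equidimensional contraction with $2$-dimensional fibres onto a normal $\Q$-factorial surface, with $X$ smooth and general fibre a smooth del Pezzo surface, and I must show $Y$ is smooth. Since a normal surface is regular in codimension $1$, $Y$ is smooth away from finitely many points, so only these need attention. The finishing tool is the local criterion that a flat local homomorphism whose source is regular has regular base: if $f$ is flat at some $x$ over $y_0$, then $\mathcal{O}_{X,x}$ being regular forces $\mathcal{O}_{Y,y_0}$ to be regular. The hard part, and the real obstacle, is therefore to establish flatness of $f$ over the finitely many non-regular points of $Y$: over the regular locus equidimensionality and the Cohen--Macaulayness of $X$ already give flatness, but at the bad points one must argue differently, exploiting that the fibres are del Pezzo surfaces (so $R^{i}f_*\mathcal{O}_X=0$ for $i>0$ and $f$ is cohomologically flat) together with the connectedness of the fibres. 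This flatness step is where the essential work lies.
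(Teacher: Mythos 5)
Your opening moves are exactly the paper's: $Y$ is a Mori dream space by Rem.~\ref{composition}, and Rem.~\ref{ratsing} reduces everything to a regular, $K$-negative, quasi-elementary contraction $\w{f}\colon\w{X}\to Y$ with $\w{X}$ a smooth $4$-fold; your observation that no prime divisor can be contracted to a subset of codimension at least $2$ (via Prop.~\ref{sabrina}, Lemma~\ref{MFS} and Rem.~\ref{aggiunta}), hence the control on fibre dimensions, is also sound. At this point, however, the paper does not prove the singularity assertions at all: it concludes by citing \cite[Lemma~3.10]{fanos}, which is exactly the statement about targets of $K$-negative quasi-elementary contractions of smooth $4$-folds. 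You instead try to reprove that lemma, and both branches of your argument have genuine gaps.

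For $\dim Y=2$, your strategy (prove $\w{f}$ flat over the finitely many non-regular points, then apply flat descent of regularity) is circular: miracle flatness is available only where the base is already regular, and conversely flatness at a point lying over $y_0$, with $\w{X}$ smooth, already forces $\mathcal{O}_{Y,y_0}$ to be regular. So the flatness you defer as ``where the essential work lies'' is not a tool towards the statement but a reformulation of it, and none of the ingredients you invoke ($R^i\w{f}_*\mathcal{O}_{\w{X}}=0$, cohomological flatness, connectedness of fibres) produces flatness of $\w{f}$ itself. (Nor can one shortcut with ``rational $+$ factorial $\Rightarrow$ smooth'' for surfaces: an $E_8$ Du Val point has trivial local class group.) For $\dim Y=3$ there are two problems: the Gorenstein claim is only asserted --- on the punctured neighbourhood $Y$ is smooth, so $K_Y$ is trivially Cartier there, and the conic bundle structure away from the point gives no information about Cartier-ness at the point itself --- and the closing implication ``isolated $+$ rational $+$ Gorenstein $+$ $\Q$-factorial $\Rightarrow$ factorial'' is false: the vertex of $\mathbb{C}^3/\mu_3$ with weights $(1,1,1)$, i.e.\ the singular point of $\pr(1,1,1,3)$, is isolated, canonical (hence rational), Gorenstein and $\Q$-factorial, yet its local class group is $\Z/3\Z$, so it is not factorial. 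Both cases genuinely require the fibration-specific arguments behind \cite[Lemma~3.10]{fanos}; they cannot be replaced by these general commutative-algebra implications.
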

\begin{proof}
The target $Y$ is a Mori dream space by Rem.~\ref{composition}.
After Rem.~\ref{ratsing} we can factor $f$ as
$X\dasharrow\w{X}\stackrel{\w{f}}{\to}Y$, where $\w{X}$ is a smooth $4$-fold, and
$\w{f}$ is a $K$-negative quasi-elementary contraction. Then the
statement follows from
\cite[Lemma~3.10]{fanos}.
\end{proof} 
\begin{corollary}\label{pluto}
Let $X$ be a Fano $4$-fold and $f\colon X\dasharrow Y$ a
quasi-elementary rational contraction. Assume that $f$ is not
regular.  

If $\dim Y=1$, then $\rho_X\leq 10$. If $\dim Y=2$, then $\rho_X\leq \rho_Y+8$.
\end{corollary}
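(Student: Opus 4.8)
The plan is to reduce to a regular, $K$-negative contraction, bound the Picard number of its general fiber, and then exploit the non-regularity of $f$ to gain one unit in the bound.

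By Rem.~\ref{ratsing} I would first factor $f$ as $X\dasharrow\w{X}\stackrel{\w{f}}{\longrightarrow}Y$, where $X\dasharrow\w{X}$ is a SQM, $\w{X}$ is a smooth $4$-fold, and $\w{f}$ is $K$-negative; since a SQM is an isomorphism in codimension one, $\rho_X=\rho_{\w{X}}$, and $\w{f}$ is again quasi-elementary. Because $f$ is not regular, the SQM is non-trivial, so by Rem.~\ref{SQM} there is at least one exceptional line $l\subset\w{X}$, with $-K_{\w{X}}\cdot l=-1$. As $\w{f}$ is $K$-negative, $l$ cannot be contracted by $\w{f}$; thus $l$ is horizontal, and by Rem.~\ref{SQM}(2) it is the only kind of curve in $\w{X}$ of non-positive anticanonical degree.

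I would then study the general fiber $F\subset\w{X}$ of $\w{f}$, with inclusion $i\colon F\hookrightarrow\w{X}$. By generic smoothness $F$ is smooth, and since $\mathcal{O}_{\w{X}}(F)|_F$ is trivial, adjunction gives $K_F=K_{\w{X}}|_F$; as every curve in $F$ is $\w{f}$-contracted, $K$-negativity makes $-K_F$ ample. Hence $F$ is a del Pezzo surface if $\dim Y=2$, and a smooth Fano $3$-fold if $\dim Y=1$. Since $\w{f}$ is quasi-elementary, Prop.~\ref{sabrina} gives
$$\rho_X-\rho_Y=\dim\N(F,\w{X})=\dim i_*(\N(F))\leq\rho_F.$$
The classical bounds $\rho_F\leq 9$ for del Pezzo surfaces and $\rho_F\leq 10$ for smooth Fano $3$-folds then yield $\rho_X\leq\rho_Y+9$ when $\dim Y=2$, and $\rho_X\leq 11$ when $\dim Y=1$.

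The remaining, and crucial, point is to improve each bound by one. Equality $\rho_X=\rho_Y+9$ (resp.\ $\rho_X=11$) forces $\rho_F$ to be maximal and $i_*$ to be injective, so $F$ is pushed into the unique top case of the classification ($\pr^2$ blown up at $8$ points, resp.\ its product with $\pr^1$). I would rule this out using the exceptional line $l$: the maximal fiber is extremely rigid, whereas $l$ is a horizontal curve of negative anticanonical degree, a feature incompatible with the product-like structure imposed by the extremal Picard number. Making this incompatibility precise --- presumably by invoking the structural analysis of $K$-negative quasi-elementary contractions of smooth $4$-folds from \cite{fanos}, in the spirit of the characterization of the equality cases in the regular Theorem recalled in the introduction --- is the main obstacle; once the extremal case is excluded one obtains $\rho_X\leq 10$ and $\rho_X\leq\rho_Y+8$, respectively.
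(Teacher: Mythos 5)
Your reduction and the bound $\rho_X\leq\rho_Y+\rho_F$ are exactly the paper's first steps, but your proof has a genuine gap at precisely the point you yourself flag as ``the main obstacle'': the improvement of each bound by one is not a finishing touch, it is the entire content of the Corollary, since the weaker bounds $\rho_X\leq\rho_Y+9$ and $\rho_X\leq 11$ hold for \emph{regular} quasi-elementary contractions as well (by \cite{fanos}, recalled in the Introduction), so without that step nothing specific to the non-regular case has been proved. Moreover, the mechanism you gesture at --- ``rigidity'' of the extremal fiber and incompatibility with a ``product-like structure'' --- is not the property the argument turns on, and it is unclear how it could be made precise; the paper invokes no structural analysis of equality cases from \cite{fanos} here.

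The relevant property of the extremal fibers is that they are \emph{covered by curves of anticanonical degree $1$}: a del Pezzo surface $F$ with $\rho_F=9$ is covered by the pencil $|-K_F|$, whose members have degree $(-K_F)^2=1$, and a Fano $3$-fold with $\rho_F=10$ is $\pr^1\times S$ with $S$ a degree-$1$ del Pezzo surface (see \cite[p.~141]{fanoEMS}), hence covered by curves $\{pt\}\times C$ with $C\in|-K_S|$, again of anticanonical degree $1$. You already recorded the other half of the argument, namely Rem.~\ref{SQM} (1): the exceptional line $l$ cannot meet any curve of anticanonical degree $1$. What is missing is the bridge from the fiber to the total space. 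The paper's argument runs: if the general fiber $F$ were covered by degree-$1$ curves, then, $F$ being a \emph{general} fiber, these curves would form a proper irreducible family covering $\w{X}$, whose general member is irreducible, of anticanonical degree $1$, and contracted by $\w{f}$; since $-K_{\w{X}}$ is $\w{f}$-ample, \emph{every} member of the family is contracted by $\w{f}$ and has degree $1$. The line $l$ is not contracted by $\w{f}$, so it is not contained in any member; the family being covering, $l$ must then intersect some member, contradicting Rem.~\ref{SQM} (1). This excludes $\rho_F=9$ (resp.\ $\rho_F=10$) and gives $\rho_X\leq\rho_Y+8$ (resp.\ $\rho_X\leq 10$). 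Without this argument, or an equivalent one, your proof is incomplete at its decisive step.
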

\begin{proof}
By Rem.~\ref{ratsing}, we can factor $f$ as
$\qquad\xymatrix{
X\ar@{-->}[r]\ar@{-->}[dr]_{f}&{\widetilde{X}}\ar[d]^{\w{f}}\\
& Y}$ \\
where $X\dasharrow\widetilde{X}$ is a SQM, $\w{X}$ is smooth,
 and $\w{f}$ is a $K$-negative
quasi-elementary contraction.
The general fiber $F$ of $\w{f}$ is a smooth Fano variety, 
and $\rho_X=\rho_{\w{X}}\leq \rho_Y+\rho_F$ (see
Rem.~\ref{pippo}). 

 Since $f$ is not a morphism, $\w{X}$
contains some exceptional line $l$, which cannot intersect curves of
anticanonical degree $1$  by Rem.~\ref{SQM} (1).

We show that $F$ cannot be covered by curves
of anticanonical degree $1$. Indeed if it were,
 since $F$ is
a general fiber of $\w{f}$, we could find a (proper and irreducible)
 family of curves in $\w{X}$, covering $\w{X}$, whose general member is an
 irreducible curve, of anticanonical degree $1$, and contracted by
 $\w{f}$. As $-K_{\w{X}}$ is $\w{f}$-ample, we deduce that \emph{every} curve 
of the family has anticanonical degree $1$ and is contracted by
 $\w{f}$. On the other hand the exceptional line $l$ is not contracted by $\w{f}$,
 hence $l$ cannot be
contained in a member of the family. Thus $l$ must intersect some curve
of the family, and we get a contradiction.

If $\dim Y=2$, then $F$ is a Del Pezzo surface, thus $\rho_F\leq 9$.
Moreover if $\rho_F=9$, then $F$ 
 is covered by the pencil $|-K_F|$ which contains curves of
anticanonical degree $1$, a contradiction. Therefore $\rho_F\leq 8$
and $\rho_X\leq \rho_Y+8$. 

If $Y$ is a curve, then $F$ is a Fano $3$-fold, so that $\rho_F\leq
10$. Again, if $\rho_F=10$, then $F\cong\pr^1\times S$ where $S$ is a
Del Pezzo surface with $\rho_S=9$ (see \cite[p.~141]{fanoEMS}), and
$F$ is covered by 
curves of anticanonical degree $1$; therefore $\rho_F\leq
9$ and we get the statement. 
\end{proof}
\begin{remark}\label{torino}
Let $X$ be a Fano $4$-fold,
$f\colon X\dasharrow Y$ a quasi-elementary rational contraction, and 
$X\dasharrow \w{X}\stackrel{\w{f}}{\to}Y$ a factorization of $f$ as in 
Rem.~\ref{ratsing}. If  $D\subset Y$ is a
   non-movable prime divisor, then 
$(\w{f})^*(D)$ is a non-movable prime divisor in $\w{X}$.
\end{remark}
\begin{proof}
Let $D'\subset \w{X}$ be a prime divisor contained in the support of $(\w{f})^*(D)$. By \cite[Lemma 3.9]{fanos} we have $\w{f}(D')=D$ and $D'=
(\w{f})^*(D)$, so that $(\w{f})^*(D)$ is a prime divisor. Finally it is not difficult to check that $(\w{f})^*(D)$ is not movable.
\end{proof}
\subsection{Picard number of divisors in Fano $4$-folds}\label{pn}
Let $X$ be a Fano manifold and $D\subset X$ a prime divisor. If
$i\colon D\hookrightarrow X$ is the inclusion, let us consider 
$\N(D,X)=i_*(\N(D))\subseteq\N(X)$. We have
$\codim\N(D,X)=\dim\ker(H^2(X,\R)\to H^2(D,\R))$, therefore the
invariant $c_X$ defined in the Introduction can also be described as:
$$c_X=\max\left\{\codim\N(D,X)\,|\,D\text{ is a prime divisor in
}X\right\}.$$ 
We will need the following result.
\begin{thm}[\cite{codim}]\label{cod}
Let $X$ be a Fano $4$-fold with $c_X\geq 3$. Then one of the following holds:
\begin{enumerate}[$(i)$]
\item $X\cong S_1\times S_2$ where $S_i$ are Del Pezzo surfaces with
  $\rho_{S_1}=c_X+1\geq\rho_{S_2}$; 
\item $c_X=3$, $\rho_X=5$, and $X$ has a quasi-elementary contraction
  onto $\pr^2$;
\item   $c_X=3$, $\rho_X=6$, and $X$ has a quasi-elementary contraction
  onto $\mathbb{F}_1$ or $\pr^1\times\pr^1$. Moreover every elementary 
contraction of $X$ is either of type $(3,2)^{sm}$, or a conic bundle.
\end{enumerate}
\end{thm}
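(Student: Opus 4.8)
The plan is to build the argument around a prime divisor $D\subset X$ realizing the maximum, so that $\codim\N(D,X)=c_X\geq 3$; equivalently the space $W:=\N(D,X)^{\perp}\subseteq\Nu(X)$ of divisor classes vanishing on every curve contained in $D$ has dimension $c_X$. I would first dispose of the product case. If $X\cong S_1\times S_2$ with $\rho_{S_1}\geq\rho_{S_2}$ (both Del Pezzo, since a smooth Fano surface is Del Pezzo), then taking $D=C\times S_2$ for a curve $C\subset S_1$ gives $\N(D,X)=\R[C]\oplus\N(S_2)$, and a check shows no prime divisor does better, so $c_X=\rho_{S_1}-1$ and case $(i)$ holds with $\rho_{S_1}=c_X+1$. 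Since the cited product theorem \cite[Th.~3.3]{codim} forces any $X$ with $c_X\geq 4$ to be such a product, it remains to treat a \emph{non-product} $X$ with $c_X=3$ and produce cases $(ii)$ and $(iii)$.

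The heart of the proof is to convert the large kernel $W$ into a quasi-elementary rational contraction of fiber type $f\colon X\dasharrow Y$. The mechanism is that classes in $W$, being trivial on all curves of $D$, should let one locate a face of $\Mov(X)$ sitting inside an equal-dimensional face of $\Eff(X)$; feeding this into Cor.~\ref{tobia} yields a quasi-elementary $f$. Using Rem.~\ref{ratsing} I would factor $f$ through an SQM as $X\dasharrow\w{X}\stackrel{\w{f}}{\to}Y$ with $\w{X}$ smooth and $\w{f}$ a $K$-negative quasi-elementary contraction; its general fiber $F$ is then a smooth Fano variety with $\rho_X-\rho_Y\leq\rho_F$ by Rem.~\ref{pippo}. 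Exploiting the value $c_X=3$ and tracking how $D$ sits over $Y$, I would argue that $\dim Y=2$; then by Cor.~\ref{target} the base $Y$ is a smooth Del Pezzo surface, and $F$ is a Del Pezzo surface as well.

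With the surface fibration in hand the classification is essentially bookkeeping. In the non-product case I would show $\rho_Y\leq 2$, so that $Y$ is a Del Pezzo surface of Picard number $1$ or $2$, hence $Y\cong\pr^2$, or $Y\cong\mathbb{F}_1$, or $Y\cong\pr^1\times\pr^1$. Reading off $\rho_X=\rho_Y+\rho_F$ together with the forced fiber gives $\rho_X=5$ in the $\pr^2$ case (case $(ii)$) and $\rho_X=6$ in the $\mathbb{F}_1$, $\pr^1\times\pr^1$ cases (case $(iii)$). For the last assertion of $(iii)$ I would run through each elementary contraction of $X$ and use Th.~\ref{smallfibers} and Th.~\ref{components} to bound its fiber dimensions, concluding that it is either of type $(3,2)^{sm}$ or a conic bundle.

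The main obstacle is the middle step: turning the purely numerical datum $\dim W=3$ into an honest quasi-elementary contraction with the correct target dimension $\dim Y=2$, and controlling the position of $D$ relative to the fibration through the intervening SQM. A secondary difficulty is cleanly separating the exotic cases $(ii)$ and $(iii)$ from the product case $(i)$---that is, proving that when $X$ does not split the base must have $\rho_Y\leq 2$---and, in $(iii)$, the exhaustive verification that no elementary contraction other than $(3,2)^{sm}$ or a conic bundle can occur.
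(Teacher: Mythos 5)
There is a genuine gap, and it sits exactly where you flagged ``the main obstacle''. For a non-product $X$ with $c_X=3$, the paper does not derive the fibration from the numerical datum $\codim\N(D,X)=3$ at all: it imports it wholesale from \cite[Cor.~1.3 and Th.~3.3]{codim}, which give not merely some quasi-elementary rational contraction of fiber type, but a \emph{regular} quasi-elementary contraction $f\colon X\to S$ onto a \emph{smooth Del Pezzo surface}, together with the precise equality $\rho_X-\rho_S=4$ and the bound $\rho_X\leq 6$. Your proposed mechanism --- classes in $\N(D,X)^{\perp}$ ``should let one locate'' a face of $\Mov(X)$ inside an equal-dimensional face of $\Eff(X)$, then apply Cor.~\ref{tobia} --- is not an argument; producing that face is the hard content of the cited paper, and nothing in the present paper lets you shortcut it. The gap then propagates through your bookkeeping: you use $\rho_X=\rho_Y+\rho_F$, which is unjustified (Rem.~\ref{pippo} only gives $\rho_X-\rho_Y\leq\rho_F$, and that inequality bounds in the wrong direction, since the general fiber is a Del Pezzo surface which may have $\rho_F$ up to $9$); without the imported equality $\rho_X-\rho_S=4$ you cannot conclude $\rho_X=5$ in case $(ii)$ or $\rho_X=6$ in case $(iii)$, and you offer no proof that $\rho_Y\leq 2$ in the non-product case, nor that $\dim Y=2$. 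Note also that $(ii)$ and $(iii)$ assert a contraction (a morphism) from $X$ itself, whereas your construction only yields a rational map factoring through a SQM; upgrading that to a regular contraction is a further unaddressed step.

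The final assertion of $(iii)$ is also not bookkeeping. The paper's argument splits according to whether $\NE(g)\subset\NE(f)$ for a given elementary contraction $g$. When $\NE(g)\not\subset\NE(f)$, finiteness of $f$ on the non-trivial fibers $F$ of $g$ together with $\dim\N(F,X)=1$ forces $\dim F\leq 1$, and Th.~\ref{smallfibers} applies --- this part is close to what you envision. But when $\NE(g)\subset\NE(f)$, fiber-dimension estimates from Th.~\ref{smallfibers} and Th.~\ref{components} give you nothing: a priori $g$ could be a divisorial or small contraction all of whose curves are contracted by $f$. The paper excludes this by invoking a structural fact from the \emph{proof} of \cite[Prop.~3.3.1]{codim}: $f$ factors as $h_2\circ h_1$ with $h_1\colon X\to Y$ and $h_2\colon Y\to S$ conic bundles, where $\NE(h_1)$ is spanned by four extremal rays all of type $(3,2)^{sm}$; then either $\NE(g)\subset\NE(h_1)$, or $(h_1)_*(\NE(g))=\NE(h_2)$ and the fibers of $g$ map finitely into fibers of $h_2$, hence are curves. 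Without that factorization your sketch has no way to handle this case, so the concluding claim of $(iii)$ remains unproved.
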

\begin{proof}
If $X\cong S_1\times S_2$ with $S_i$ Del Pezzo surfaces, we have
$c_X=\max\{\rho_{S_1}-1,\rho_{S_2}-1\}$ (see \cite[Ex.~3.1]{codim}),
so up to exchanging $S_1$ and $S_2$ we get $\rho_{S_1}=c_X+1\geq\rho_{S_2}$.

If $X$ is not a product of surfaces, then by \cite[Cor.~1.3 and
  Th.~3.3]{codim} we have $c_X=3$, $\rho_X\leq 6$, and
$X$ has a
quasi-elementary contraction $f\colon X\to S$ where $S$ is
 a smooth Del Pezzo surface with
$\rho_X-\rho_S=4$. Thus
$\rho_S\in\{1,2\}$, and if $\rho_S=1$ we get $(ii)$. 

Suppose that
$\rho_S=2$, and let $g$ be an elementary contraction of $X$.
If $\NE(g)\not\subset\NE(f)$, then $f$ is finite on every non-trivial 
fiber $F$ of $g$. Since $\dim\N(F,X)=1$, we cannot have $f(F)=S$,
therefore $\dim F\leq 1$. Hence $g$ is either
 of type $(3,2)^{sm}$, or a conic bundle, by Th.~\ref{smallfibers}. 

Suppose that $\NE(g)\subset\NE(f)$.
After \cite[proof of Prop.~3.3.1, in particular \S3.3.15]{codim}, $f$ factors as
$h_2\circ h_1$ where $h_1\colon X\to Y$ and $h_2\colon Y\to S$ are conic
bundles, $Y$ is smooth with $\dim Y=3$ and $\rho_Y=3$, and $\NE(h_1)$
contains $4$ extremal rays, all of type $(3,2)^{sm}$. Therefore either
$\NE(g)\subset\NE(h_1)$ and we are done, or $(h_1)_*(\NE(g))=\NE(h_2)$. In
this last case,  
if $F$ is  a non-trivial fiber of $g$, then $h_1$ is
finite on $F$ and $h_1(F)$ is contained in a fiber of $h_2$, therefore
$\dim F=1$ and we get the statement. 
\end{proof}
Therefore, if we are interested in studying Fano $4$-folds  which are
not products and have large
Picard number, we can assume that $c_X\leq
2$, so that for every prime divisor $D\subset X$ we have
$\dim\N(D,X)\geq \rho_X-2$. 
Let us also state the following application.
\begin{corollary}\label{eco}
Let $X$ be a Fano $4$-fold. If $X$ has a small elementary
contraction\footnote{This is equivalent to
 $\Mov(X)\supsetneq\Nef(X)$.} then either $\rho_X=5$
and $c_X=3$, or $c_X\leq 2$.
\end{corollary}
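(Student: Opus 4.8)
The final statement to prove is Corollary~\ref{eco}: if a Fano $4$-fold $X$ has a small elementary contraction, then either ($\rho_X=5$ and $c_X=3$) or $c_X\leq 2$.

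\medskip

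The plan is to argue by contraposition via Theorem~\ref{cod}. Suppose $X$ has a small elementary contraction but $c_X\geq 3$; I want to force $\rho_X=5$ and $c_X=3$. The existence of a small elementary contraction means precisely that $\Mov(X)\supsetneq\Nef(X)$ (as the footnote asserts): a small elementary contraction corresponds to a facet of $\Nef(X)$ lying in the interior of $\Mov(X)$, so $\Nef(X)$ is strictly contained in $\Mov(X)$. First I would invoke Theorem~\ref{cod}, which under $c_X\geq 3$ splits into three cases: (i) $X\cong S_1\times S_2$ a product of Del Pezzo surfaces, (ii) $c_X=3$, $\rho_X=5$ with a quasi-elementary contraction onto $\pr^2$, or (iii) $c_X=3$, $\rho_X=6$ onto $\mathbb{F}_1$ or $\pr^1\times\pr^1$ with every elementary contraction of type $(3,2)^{sm}$ or a conic bundle. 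The goal is to eliminate cases (i) and (iii), leaving only (ii), which is exactly the desired conclusion.

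\medskip

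For case (iii), I would note directly that every elementary contraction of $X$ is either of type $(3,2)^{sm}$ (hence a divisorial contraction, being birational with exceptional locus a divisor) or a conic bundle (a fiber type contraction). Neither of these is small. Since every elementary contraction of $X$ is regular and non-small, $\Nef(X)=\Mov(X)$, contradicting the existence of a small elementary contraction. So case (iii) cannot occur. For case (i), the product structure forces $X$ to have no small contractions at all: a Del Pezzo surface is its own unique SQM (surfaces being automatically $\Q$-factorial with $\Mov=\Nef$ since movable divisors on a surface are nef), and I expect a product $S_1\times S_2$ of Del Pezzo surfaces to satisfy $\Mov(X)=\Nef(X)$, so again no small elementary contraction exists.

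\medskip

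The main obstacle will be handling case (i) cleanly. The claim that $S_1\times S_2$ has no small elementary contraction needs justification: one should check that $\Mov(S_1\times S_2)=\Nef(S_1\times S_2)$. Since a small contraction exists iff $\Mov(X)\supsetneq\Nef(X)$, it suffices to show that every movable divisor on the product is nef. On each factor, movable equals nef (surface case); the subtlety is that $\Nu(X)\cong\Nu(S_1)\oplus\Nu(S_2)$ and $\Eff$, $\Mov$, $\Nef$ all split as products of the corresponding cones on the factors, so $\Mov(X)=\Mov(S_1)\times\Mov(S_2)=\Nef(S_1)\times\Nef(S_2)=\Nef(X)$. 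Once this splitting is invoked, case (i) is excluded. Assembling the three exclusions, the only surviving possibility under $c_X\geq 3$ is case (ii), giving $\rho_X=5$ and $c_X=3$; the contrapositive then yields the corollary.
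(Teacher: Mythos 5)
Your proposal is correct and is essentially the paper's own (implicit) argument: the corollary is stated there without proof as a direct application of Th.~\ref{cod}, namely the observation that in case $(i)$ (a product of Del Pezzo surfaces) and in case $(iii)$ (where every elementary contraction is of type $(3,2)^{sm}$ or a conic bundle) no small elementary contraction can exist, leaving only case $(ii)$. The one step you assert rather than prove --- the splitting $\Mov(S_1\times S_2)=\Mov(S_1)\times\Mov(S_2)$ --- is true (restrict a movable divisor to general fibers $S_1\times\{pt\}$ and $\{pt\}\times S_2$, where a base locus of codimension $\geq 2$ in the $4$-fold cuts out a finite set); alternatively, case $(i)$ is excluded even more directly by noting that $\NE(S_1\times S_2)=\NE(S_1)\times\NE(S_2)$, so every elementary contraction of the product is induced by an elementary contraction of a factor and is therefore divisorial or of fiber type, never small.
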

It is natural to ask whether we can
deduce similar properties for a SQM of $X$. The following two
statements describe how $\dim\N(D,X)$ varies under a flip or a SQM.
\begin{remark}\label{divisors}
Let $X$ be a smooth $4$-fold, $\sigma$ a $K$-negative small extremal ray
of $\NE(X)$, $X\dasharrow \w{X}$ the flip of $\sigma$, and $\w{\sigma}$ the
corresponding small extremal ray of $\NE(\w{X})$.
\begin{enumerate}[(1)]
\item Let $Z\subset X$ be a closed subset disjoint from $\Lo(\sigma)$, and
  $\w{Z}\subset\w{X}$ its transform. Then
  $\dim\N(\w{Z},\w{X})=\dim\N(Z,X)$. 
\item
Let $D\subset{X}$ be a prime divisor, and
$\w{D}\subset\w{X}$ its transform. 
Then either $\dim\N(\w{D},\w{X})=\dim\N(D,{X})$, or
$\dim\N(\w{D},\w{X})=\dim\N(D,{X})-1$. If the last case occurs,
then $D\cdot \sigma<0$, $\w{D}\cdot\w{\sigma}>0$, and
$\w{\sigma}\not\subset\N(\w{D},\w{X})$.  
\end{enumerate}
\end{remark}
\begin{proof}
We have the standard flip diagram:
$\quad\xymatrix{
X\ar[dr]_g\ar@{-->}[rr]&&{\w{X}}\ar[dl]^{\w{g}}\\
&Y&
}$\\
where $g$ and $\w{g}$ are the contractions of $\sigma$ and
$\w{\sigma}$ respectively.

\noindent(1)\ \  We have $g(Z)=\w{g}(\w{Z})$ and
$g_*(\N(Z,X))=\N(g(Z),Y)=\w{g}_*(\N(\w{Z},\w{X}))$. 

We show that $\sigma\subset\N(Z,X)$ if and only if
$\w{\sigma}\subset\N(\w{Z},\w{X})$. Indeed let $B\subset \Lo(\sigma)$ be a line
in an exceptional plane, and $l\subseteq \Lo(\w{\sigma})$ an exceptional
line. If  $\sigma\subset\N(Z,X)$, then  
$B\equiv\sum_i\lambda_i C_i$, with $\lambda_i\in\Q$ and $C_i\subset Z$
irreducible curves. Let $\w{C}_i\subset\w{Z}$ be the transform of
$C_i$. 
Then there exists $\mu\in\Q$ such that $\mu l\equiv\sum_i\lambda_i
\w{C}_i$. On the other hand, by taking anticanonical degrees, we get 
$$1=-K_X\cdot B=\sum_i\lambda_i (-K_X)\cdot
C_i=\sum_i\lambda_i(-K_{\w{X}})\cdot\w{C}_i=-\mu,$$ 
therefore $\mu\neq 0$ and $[l]\in\N(\w{Z},\w{X})$. The other
implication is shown in the same way. 

Therefore $\ker g_*\subseteq \N(Z,X)$ if and only if $\ker
\w{g}_*\subseteq\N(\w{Z},\w{X})$, which yields
$\dim\N(Z,X)=\dim\N(\w{Z},\w{X})$.  

\medskip

\noindent(2)\ \  If $\Lo(\sigma)\cap D=\emptyset$, then 
$\dim\N(D,X)=\dim\N(\w{D},\w{X})$ by (1).  
Suppose that $\Lo(\sigma)\cap D\neq \emptyset$. 
By Th.~\ref{flip}, $\Lo(\sigma)$ is a union of exceptional planes, in
particular there is a curve $C\subseteq\Lo(\sigma)\cap D$. Then
$[C]\in\sigma\cap\N(D,X)$, so that
 $\sigma\subset\N(D,X)$, and we get:
$$\dim\N(D,X)=\begin{cases}
\dim\N(\w{D},\w{X})\quad&\text{if }\w{\sigma}\subset \N(\w{D},\w{X});\\
\dim\N(\w{D},\w{X})+1\quad&\text{if }\w{\sigma}\not\subset \N(\w{D},\w{X}).
\end{cases}$$
In the last case, $\Lo(\w{\sigma})\cap\w{D}$ must be a (non-empty)
finite set, therefore we
have
$\w{D}\cdot \w{\sigma}>0$ and $D\cdot \sigma<0$.
\end{proof}
\begin{corollary}\label{dimension}
Let $X$ be a Fano $4$-fold, $f\colon X\dasharrow\w{X}$ a SQM, 
$D\subset X$ a prime divisor, and $\w{D}\subset\w{X}$ its transform.
\begin{enumerate}[$\bullet$]
\item If $f$ factors as a sequence of $m$ $K$-negative
flips, then $\dim\N(D,X)\leq\dim\N(\w{D},\w{X})+m$. 
\item If $D$
does not contain 
exceptional planes, then $\dim\N(D,X)=\dim\N(\w{D},\w{X})$. 
\end{enumerate}
\end{corollary}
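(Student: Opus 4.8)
**The plan is to prove both statements by reducing to the single-flip case handled in Remark~\ref{divisors}, and then iterating.**

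First I would establish the second bullet, since it is the cleaner of the two and will serve as a tool. By Rem.~\ref{SQM} the SQM $f$ factors as a sequence of $K$-negative flips $X=X_0\dasharrow X_1\dasharrow\cdots\dasharrow X_m=\w{X}$, and by Rem.~\ref{divisors}(2) each individual flip can drop $\dim\N(D_i,X_i)$ by at most one, and only when the transform $D_i$ intersects the locus $\Lo(\sigma_i)$ of the flipped ray, i.e. contains a line in an exceptional plane. The hypothesis that $D$ contains no exceptional plane is meant to propagate along the sequence so that no drop ever occurs. The key point I would need to check is that the property ``the transform of $D$ contains no exceptional plane'' is preserved at each step. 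Concretely, if $D_i$ contains no exceptional plane of $X_i$, I want to conclude that the transform $D_{i+1}\subset X_{i+1}$ contains no exceptional plane of $X_{i+1}$; combined with Rem.~\ref{divisors}(2) this forces $\Lo(\sigma_i)\cap D_i$ to be at most a curve not equal to a component of an exceptional plane, and more precisely (by Th.~\ref{flip}, where $\Lo(\sigma_i)$ is a disjoint union of exceptional planes $\pr^2$) that $D_i$ cannot contain any exceptional plane, hence by Rem.~\ref{divisors}(2) the dimension is unchanged. I expect this propagation to be the main obstacle: one must argue that an exceptional plane in $X_{i+1}$ lying inside $D_{i+1}$ would pull back, across the flip, to force either an exceptional plane in $D_i$ or to violate the ``no exceptional plane'' hypothesis via the exceptional lines appearing in $X_{i+1}$.

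For the first bullet the argument is a straightforward telescoping. Writing the factorization into $m$ $K$-negative flips as above and applying Rem.~\ref{divisors}(2) at each step, I get
\[
\dim\N(D_i,X_i)\leq\dim\N(D_{i+1},X_{i+1})+1
\]
for every $i=0,\dotsc,m-1$, since each flip either preserves the dimension or decreases it by exactly one. Summing these $m$ inequalities telescopes to
\[
\dim\N(D,X)=\dim\N(D_0,X_0)\leq\dim\N(D_m,X_m)+m=\dim\N(\w{D},\w{X})+m,
\]
which is the first claim. The only subtlety is ensuring that the decomposition into $K$-negative flips is legitimate, which is exactly what Rem.~\ref{SQM} (via \cite[Prop.~2.4]{codim}) provides, together with the fact that at each stage $X_i$ is again a smooth Fano-type $4$-fold so that Rem.~\ref{divisors} applies.

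In summary, the inequality in the first bullet is an immediate telescoping of the per-flip estimate from Rem.~\ref{divisors}(2), and the equality in the second bullet follows once one verifies that the absence of exceptional planes in $D$ is stable under each flip, so that the ``bad case'' of Rem.~\ref{divisors}(2) never arises. The heart of the proof is this stability check, for which I would use the explicit blow-up factorization of a $K$-negative flip in Th.~\ref{flip} together with the disjointness of exceptional planes and exceptional lines recorded in Rem.~\ref{SQM}(3).
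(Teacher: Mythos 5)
Your proposal is correct and takes essentially the approach the paper intends: Cor.~\ref{dimension} is stated without a separate proof precisely because it follows from factoring the SQM into $K$-negative flips (Rem.~\ref{SQM}) and iterating Rem.~\ref{divisors}(2), which is your telescoping scheme. The propagation step you flag as the main obstacle does close with exactly the tools you name: an exceptional plane of $X_{i+1}$ contained in $D_{i+1}$ is disjoint from all exceptional lines of $X_{i+1}$ by Rem.~\ref{SQM}(3) (applied to the SQM $X\dasharrow X_{i+1}$ of the Fano $X$), hence lies in the locus where the flip is an isomorphism and descends to an exceptional plane of $X_i$ inside $D_i$; conversely, a dimension drop in Rem.~\ref{divisors}(2) forces $D_i\cdot\sigma_i<0$, i.e.\ $\Lo(\sigma_i)\subseteq D_i$, i.e.\ $D_i$ contains exceptional planes, so no drop ever occurs.
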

\subsection{Characterization of non-movable prime divisors}\label{nm}
In this section we give 
a geometric description of non-movable prime divisors
in a Fano $4$-fold $X$ with $\rho_X\geq 6$ (Th.~\ref{effective}). As noticed in
Rem.~\ref{nonmovable}, the classes of these  divisors generate the
one-dimensional faces of $\Eff(X)$ which do not lie in $\Mov(X)$. On
the other hand, we show  that if a
one-dimensional face of $\Eff(X)$ is contained in $\Mov(X)$, then
$\rho_X\leq 11$ 
(Prop.~\ref{lorenzo}). Th.~\ref{effective} also allows to describe elementary
divisorial rational contractions of $X$, see Cor.~\ref{monaco}.

 We refer the reader to \cite{sammy} for a study of
$\Eff(X)$ and $\NM(X)$ for a Fano $4$-fold $X$.
\begin{thm}\label{effective}
Let $X$ be a Fano $4$-fold with $\rho_X\geq 6$, and
 $D\subset X$ a non-movable prime divisor. Then
there exists a diagram: 
$$\xymatrix{
X\ar@{-->}[r]&{\widetilde{X}}\ar[d]^{f}\\
& Y}$$
where $X\dasharrow\w{X}$ is a SQM whose indeterminacy locus is the union
of exceptional planes $L$ such that $D\cdot C_L<0$ for a line
$C_L\subset L$, and
$f$ is an elementary divisorial
 contraction with $\Exc(f)=\w{D}$ (the transform of $D$). Moreover
 one of the following
holds:
\begin{enumerate}[$(i)$]
\item $X=\w{X}$, $D$ is the locus of an extremal ray of type $(3,2)$,
  and $D$ does not contain any exceptional plane;
\item 
$Y$ is smooth and Fano, $f$ is
the blow-up of a smooth curve, and $\w{D}$ is a
$\pr^2$-bundle over a smooth curve;
\item 
$Y$ is smooth and Fano, $f$ is
the blow-up of a point, and $\w{D}\cong\pr^3$;
\item 
$\w{D}$ is isomorphic to a
quadric, $f(\w{D})$ is a factorial and terminal singular point, 
and $Y$ is Fano.
\end{enumerate}
\end{thm}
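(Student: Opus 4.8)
The plan is to produce the diagram by running a $D$-negative Mori program on $X$, and then to identify the resulting elementary divisorial contraction by feeding the known classification of such contractions on smooth $4$-folds through the special anticanonical geometry of a SQM of a Fano $4$-fold (Remark~\ref{SQM}).

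\emph{Setting up the diagram.} Since $D$ is non-movable, Remark~\ref{nonmovable} provides a SQM $X\dasharrow\w X$ that factors as a sequence of $D$-negative flips, followed by an elementary divisorial contraction $f\colon\w X\to Y$ with $\Exc(f)=\w D$. By Remark~\ref{SQM} the variety $\w X$ is smooth and the indeterminacy locus of $X\dasharrow\w X$ is a disjoint union of exceptional planes. To see that these are exactly the planes $L$ with $D\cdot C_L<0$, I would combine the fact that every flip in the factorization is $D$-negative with the identity $D\cdot C_L=-\w D\cdot l$ of Remark~\ref{intersection}, relating a line $C_L$ in such a plane to the exceptional line $l\subset\w X$ it produces; the same identity forces $\w D\cdot l>0$, so no exceptional line lies on $\w D$ although each one meets it.

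\emph{K-negativity and the trichotomy.} The key initial reduction is that $f$ is $K$-negative. Indeed the curves contracted by $f$ sweep out the threefold $\w D$, whereas $\w X$ contains only finitely many exceptional lines; hence a general contracted curve $C$ is not an exceptional line, so $-K_{\w X}\cdot C>0$ by Remark~\ref{SQM}(2), and since $\NE(f)=\R_{\geq 0}[C]$ we obtain $K_{\w X}\cdot\NE(f)<0$. This places $f$ within the classification of $K$-negative elementary divisorial contractions of smooth $4$-folds (Theorems~\ref{smallfibers}, \ref{components} and \cite{AWaview}), and I would argue by the type $(3,b)$ of $f$. If $b=2$, the general fiber is a rational curve of anticanonical degree $1$, hence disjoint from the exceptional lines by Remark~\ref{SQM}(1); since these fibers cover $\w D$ while any flip would create an exceptional line meeting $\w D$, I would conclude that no flip occurred, so $X=\w X$ and we are in case $(i)$, the absence of an exceptional plane inside $D$ following from a finer analysis of the fibers of the $(3,2)$ contraction. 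If $b=1$, the classification of \cite{AWaview} identifies $f$ as the blow-up of a smooth curve with $\w D$ a $\pr^2$-bundle over it, which is case $(ii)$. If $b=0$, then $\w D$ together with $\mathcal O_{\w D}(-\w D)$ ranges over the known list for smooth $4$-folds, and I would retain only $(\pr^3,\mathcal O(1))$ and $(Q,\mathcal O(1))$ with $Q$ a quadric, giving cases $(iii)$ and $(iv)$.

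\emph{Fano-ness of $Y$, and the hard points.} To show $Y$ is Fano in cases $(ii)$--$(iv)$, I would check $-K_Y\cdot\gamma>0$ on each extremal curve class $\gamma$ of the Mori dream space $Y$ (Remark~\ref{targetMDS}). Writing $K_{\w X}=f^*K_Y+a\,\w D$ with discrepancy $a>0$, a curve meeting $\dom(f^{-1})$ contributes $-K_Y\cdot\gamma=-K_{\w X}\cdot\w\gamma+a\,\w D\cdot\w\gamma>0$, while for the image of an exceptional line one finds $-K_Y\cdot f(l)=-1+a\,(\w D\cdot l)$, so the positive discrepancy exactly absorbs the single negative intersection $-K_{\w X}\cdot l=-1$. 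I expect the genuine obstacles to be three intertwined points, all governed by this interplay between discrepancies and the exceptional lines of the SQM: proving that type $(3,2)$ forbids flips and forbids an exceptional plane inside $D$; excluding from type $(3,0)$ every pair other than $(\pr^3,\mathcal O(1))$ and $(Q,\mathcal O(1))$ (in particular $(\pr^3,\mathcal O(2))$, whose discrepancy $a=1$ is too small to keep $-K_Y\cdot f(l)$ positive and whose target would fail to be factorial); and upgrading pointwise positivity of $-K_Y$ to ampleness, where the polyhedrality of $\overline{\NE}(Y)$ for a Mori dream space is what makes the reduction to finitely many extremal curves legitimate.
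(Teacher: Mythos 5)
Your skeleton (run a $D$-negative program via Rem.~\ref{nonmovable}, use Rem.~\ref{SQM} to get smoothness and $K$-negativity, then argue by the type $(3,b)$ of $f$ and compute discrepancies for Fano-ness of $Y$) does match the paper's strategy at a coarse level, and your derivation of $K$-negativity of $f$ from Rem.~\ref{SQM}(2) is a fine variant of the paper's choice of a $K$-negative Mori program. But the proposal has a fatal structural gap: you never use the hypothesis $\rho_X\geq 6$, and the statement is false without it (for small $\rho_X$ there exist Fano $4$-folds with elementary contractions of type $(3,1)$ that are not smooth blow-ups, and of type $(3,0)$ with exceptional divisor a Del Pezzo $3$-fold other than $\pr^3$ or a quadric). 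The paper's proof hinges on a counting argument you are missing entirely: since $\rho_X\geq 6$, by \cite[Cor.~1.3]{31} $X$ itself admits no divisorial contraction of type $(3,0)$ or $(3,1)$, so in the non-$(3,2)$ cases flips must occur; then $X$ has a small elementary contraction, Cor.~\ref{eco} gives $c_X\leq 2$, hence $\dim\N(D,X)\geq 4$, and Rem.~\ref{dimension} converts the drop from $\dim\N(D,X)$ to $\dim\N(\w{D},\w{X})$ into a lower bound on the number $r$ of exceptional lines: $r\geq 4-\dim\N(\w{D},\w{X})$, i.e.\ $r\geq 2$ in type $(3,1)$ and $r\geq 3$ in type $(3,0)$. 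These exceptional lines, all meeting $\w{D}$ positively, are the engine of every exclusion; without them none of the later steps can run. The same mechanism (via Th.~\ref{cod}: $c_X\geq 4$ forces a product of surfaces) is what rules out an exceptional plane inside $D$ in case $(i)$ — your ``finer analysis of the fibers'' is not a proof and cannot be, since it ignores $\rho_X$.

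Two of your classification steps are also wrong or vastly understated. In type $(3,1)$, Takagi's classification does \emph{not} say that $f$ is a smooth blow-up: it includes contractions whose fibers are covered by curves of anticanonical degree $1$ (with $Y$ singular); the paper excludes these precisely because a fiber meeting an exceptional line cannot be covered by such curves (Rem.~\ref{SQM}(1)), which requires $r\geq 1$, and then uses $r\geq 2$ to control how fibers meet the lines. In type $(3,0)$ there is no short ``known list'' to select from: the exceptional divisor can be \emph{any} generalized Del Pezzo $3$-fold in Fujita's sense, including singular and non-normal ones, and the bulk of the paper's proof is a case-by-case exclusion over the degree $d=L^3$ ($d=1$: weighted hypersurface; $d=2$: double cover of $\pr^3$, using a line through the images of two points on two exceptional lines; $d=3,4$: covered by lines; $d\geq 5$: smooth case, ordinary double points via smoothing, non-normal case via projection of a variety of minimal degree, and $\dim\Sing=1$ via Fujita's cone construction). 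Even your exclusion of $(\pr^3,\mathcal{O}(-2))$ is circular: you discard it because $-K_Y\cdot f(l)$ would fail to be positive, but Fano-ness of $Y$ is a \emph{conclusion} of the theorem, not an a priori constraint; the paper instead excludes it because its lines have anticanonical degree $2$ in $\w{X}$ and a line through two points lying on two distinct exceptional lines violates Rem.~\ref{SQM}(1) — again needing $r\geq 2$. Finally, in your Fano-ness check the blown-up curve $C=f(\w{D})$ (case $(ii)$) meets neither $\dom(f^{-1})$ nor is the image of an exceptional line, so your dichotomy misses it; the paper avoids this by proving $-K_{\w{X}}+a\w{D}$ is nef with null face exactly $\NE(f)$, after showing $\NE(f)$ is the only $\w{D}$-negative extremal ray.
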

\noindent We will say that $D$ is              \emph{of type $(3,2)$, of
type $(3,1)$, of type $(3,0)^{\pr^3}$,} or \emph{of type $(3,0)^Q$}, when we are
respectively in case $(i)$, $(ii)$, $(iii)$, or $(iv)$ above. 
\begin{remark}
In cases $(ii)-(iv)$ we will also show that $c_X\leq 2$, and that the
birational map $X\dasharrow\w{X}$ factors as a sequence of at least
$\rho_X-4$ $D$-negative and $K$-negative
flips (this follows from \eqref{oggi}). In particular,
Th.~\ref{effective} implies Th.~\ref{secondo}. 
\end{remark}
\begin{example}[A non-movable prime divisor  of type $(3,0)^{\pr^3}$]
Let $Y:=(\pr^1)^4$ and let $f\colon\w{X}\to Y$ be the blow-up of a point $p\in
Y$. 
Then $\w{X}$ is a toric $4$-fold with $\rho_{\w{X}}=5$; in
particular $\w{X}$ is a Mori dream space. 

Let $C_1,C_2,C_3,C_4\subset Y$ be the irreducible curves of type
$\pr^1\times\{pts\}$ through $p$, and $l_i\subset\w{X}$ the transform
of $C_i$. We have $-K_Y\cdot C_i=2$, $-K_{\w{X}}\cdot l_i=-1$, and
$\Exc(f)\cdot l_i=1$; in
particular, $\w{X}$ is not Fano.

On the other hand $l_i$ is an exceptional line, $\R_{\geq 0}[l_i]$ is
a small extremal ray of $\NE(\w{X})$, and it is possible to flip these
exceptional lines with a sequence of $4$ flips $\w{X}\dasharrow X$.

Then $X$ is Fano\footnote{This toric Fano $4$-fold is described in
  \cite[Prop.~3.5.8(iii)]{bat2}.} 
and the transform $D\subset X$ of $\Exc(f)$ is a smooth
divisor, isomorphic to the blow-up of $\pr^3$ in $4$ points. 
There are $4$
exceptional planes $L_1,\dotsc,L_4\subset D$, 
and $D\cdot C_{L_i}=-1$ where $C_{L_i}\subset L_i$ is a line. 
\end{example}
\begin{proof}[Proof of Th.~\ref{effective}]
After \cite[Prop.~2.4]{codim}, there exists a Mori program for $D$ such that every extremal ray of the program is $K$-negative. By Rem.~\ref{nonmovable}, this gives 
 a SQM $g\colon X\dasharrow\w{X}$, which
factors as a sequence of $D$-negative and $K$-negative flips, and an elementary,
$K$-negative, divisorial  contraction $f\colon \w{X}\to Y$ with
exceptional divisor $\w{D}$, the transform of $D$. Notice that
$\w{D}$ has positive
intersection with all exceptional lines in $\w{X}$.

If $f$ is of type $(3,2)$, then $\widetilde{D}$ is covered by curves
of anticanonical degree $1$, thus by Rem.~\ref{SQM} (1) $\w{D}$ cannot 
intersect any exceptional
line. This means that $X=\w{X}$ and $D$ is the locus of the extremal
ray $\NE(f)$, of type $(3,2)$.

Suppose that $D$ contains an exceptional plane $L$. 
After the classification of possible isolated $2$-dimensional fibers
of $f$ in \cite[Th.~4.7]{AWaview}, we know that an exceptional
plane cannot be a component of a fiber of $f$, therefore $f$ is finite
on $L$. Thus $f(L)=f(D)$, which implies that $\dim\N(f(D),Y)=1$. On
the other hand $\N(D,X)=(f_*)^{-1}\N(f(D),Y)$, so that
$\dim\N(D,X)=2$, and $c_X\geq \rho_X-2\geq 4$.
Then $X$ should be a product of
surfaces by Th.~\ref{cod}, thus $X$ cannot contain any exceptional
plane, and we get a contradiction. Therefore we 
have $(i)$.

\medskip

Suppose that $f$ is not of type $(3,2)$. Since $\rho_X\geq 6$, by
\cite[Cor.~1.3]{31} $X$ cannot have  
elementary divisorial
contractions of type $(3,0)$ or $(3,1)$, therefore $g$
is not an isomorphism and $X$ has a small
elementary contraction. Hence $c_X\leq 2$ by Cor.~\ref{eco}, 
and $\dim\N(D,X)\geq 4$. 

Let
$l_1,\dotsc,l_r\subset\w{X}$ be the exceptional lines, and suppose
that $g$ factors as a sequence of $m\geq 1$ $K$-negative and
$D$-negative flips. Then Rem.~\ref{dimension} yields
$m\geq \dim\N(D,X)-\dim\N(\w{D},\w{X})$, therefore:
\stepcounter{thm}
\begin{equation}
\label{oggi} 
r\geq m\geq 
\rho_X-c_X-\dim\N(\w{D},\w{X})\geq
4-\dim\N(\w{D},\w{X}).
\end{equation} Moreover
$\w{D}\cdot l_i>0$ for every $i=1,\dotsc,r$, and $l_i$
can not be contained in a fiber of $f$.

\medskip

Suppose that $f$ is of type $(3,1)$, so that $\dim\N(\w{D},\w{X})=2$
and $r\geq 2$ by \eqref{oggi}.
 Let $F\subset\w{D}$ be a fiber of $f$ 
intersecting  $l_1\cup\cdots\cup l_r$. 
Then $F$ cannot be covered by curves of
anticanonical degree $1$ by Rem.~\ref{SQM} (1).
By the classification of elementary $K$-negative contractions
of type $(3,1)$ in \cite{takagi}, this is possible only if
  $Y$ is smooth and $f$ is
the blow-up a smooth curve $C\subset Y$, so that $\w{D}$ is a
$\pr^2$-bundle over $C$. Moreover the lines in the fibers of
$f_{|\w{D}}$
 have anticanonical degree $2$
in $\w{X}$.

If $F$
intersects $l_1\cup\cdots\cup l_r$
in at least two points, by taking the line through these two points we get a
contradiction with Rem.~\ref{SQM} (1).
Therefore every fiber of $f_{|\w{D}}$ intersects $l_1\cup\cdots\cup
l_r$ in at most one point, and 
the exceptional lines
$l_1,\dotsc,l_r$ 
intersect different fibers of $f_{|\w{D}}$. 
Since $r\geq 2$, this implies that no exceptional line is contained
in $\w{D}$.

Let's show that $Y$ is Fano. We have $f^*(-K_Y)=-K_{\w{X}}+2\w{D}$
and $-K_Y\cdot f(l_i)=(-K_{\w{X}}+2\w{D})\cdot l_i=2\w{D}\cdot l_i-1>0$ 
for every $i=1,\dotsc,r$. 
If $\sigma$ is an extremal ray of $\NE(\w{X})$ with $-K_{\w{X}}\cdot \sigma>0$
and $\w{D}\cdot \sigma\geq 0$, then 
$(-K_{\w{X}}+2\w{D})\cdot \sigma>0$.

Suppose that $\w{X}$ has a $\w{D}$-negative
 extremal ray $\sigma\neq\NE(f)$.
Then $\Lo(\sigma)\subseteq\w{D}$, so that $-K_{\w{X}}\cdot \sigma>0$. If
$G\subset\w{D}$ is a non-trivial fiber of the contraction of $\sigma$, then
$f$ must be finite on $G$, hence $\dim G=1$. Therefore $\sigma$
is of type $(3,2)$ (see Th.~\ref{smallfibers}),
and $\w{D}$ is
covered by curves of anticanonical degree $1$, a
contradiction by Rem.~\ref{SQM} (1). We deduce that
 $-K_{\w{X}}+2\w{D}$ is nef and
$(-K_{\w{X}}+2\w{D})^{\perp}\cap\NE(\w{X})=\NE(f)$, hence $-K_Y$ is
ample and we have $(ii)$.
 
\medskip

Assume now that $f$ is of type $(3,0)$, so that
$\dim\N(\w{D},\w{X})=1$ and $r\geq 3$ by \eqref{oggi}.

Suppose that $\w{D}\cong\pr^3$. Since $-K_{\w{X}}\cdot\NE(f)>0$, we
have $\mathcal{N}_{\w{D}/\w{X}}\cong\mathcal{O}_{\pr^3}(-a)$ with
$a\in\{1,2,3\}$. 
If $a=3$, then $\w{D}$ is covered by curves of anticanonical degree
$1$, which is impossible by Rem.~\ref{SQM} (1), because $\w{D}$
intersects $l_1$. 
If $a=2$, the lines in $\w{D}$ have anticanonical degree $2$ in $X$,
and by taking a line which intersects both $l_1$ and $l_2$, we get
again
a contradiction by Rem.~\ref{SQM} (1). 
Therefore $a=1$, $Y$ is smooth, and $f$ is the blow-up of a point $p\in Y$. 

We have $f^*(-K_Y)=-K_{\w{X}}+3\w{D}$ and $-K_Y\cdot f(l_i)=
(-K_{\w{X}}+3\w{D})\cdot
l_i=3\w{D}\cdot l_i-1>0$ 
for every $i=1,\dotsc,r$, and similarly as before we conclude
that $Y$ is Fano, so we get $(iii)$.

\medskip

Suppose that $\w{D}\cong Q$, where $Q\subset\pr^4$ is a
quadric. Again we have
$\mathcal{N}_{\w{D}/\w{X}}\cong\mathcal{O}_Q(-a)$ with $a\in\{1,2\}$. If
$a=2$, then $\w{D}$ is covered by curves of anticanonical degree $1$,
which is impossible. Thus $a=1$, and if $C\subset \w{D}$ corresponds to a
line in $Q$, we
have $-K_{\w{X}}\cdot C=2$ and $\w{D}\cdot C=-1$. The point
$p=f(\w{D})\in 
Y$ is a factorial terminal singularity in $Y$, and
$f^*(-K_Y)=-K_{\w{X}}+2\w{D}$. As before we see that
$-K_Y\cdot f(l_i)=2\w{D}\cdot l_i-1>0$ for every $i=1,\dotsc,r$,
and $Y$ is Fano, so we get $(iv)$.
  
\medskip

We assume now that $\w{D}$ is not isomorphic to $\pr^3$ or a quadric,
and show that 
this gives a contradiction.
This type of exceptional divisor has been studied
by Beltrametti \cite{beltra,beltrametti86} and by Fujita 
as an application of his theory
of Del Pezzo varieties -- we refer the reader to
\cite[\S3.2]{fanoEMS} for an overview.

 Notice that $\w{D}$ is reduced and irreducible. Being a
 divisor in a smooth variety, it is Cohen-Macaulay and has a
 locally free dualising sheaf $\omega_{\w{D}}$ given by
$$\omega_{\w{D}}=\mathcal{O}_{\w{X}}(K_{\w{K}}+\w{D})_{|\w{D}}.$$
 Therefore $\w{D}$ is
 Gorenstein, and by Serre's criterion, it is normal if and only if
$\dim\Sing\w{D}\leq 1$.

By \cite[\S3, in particular (3.2)]{fuji2}, 
 there exists an ample line bundle $L_{\w{X}}\in\Pic(\w{X})$
such that, if $L:=(L_{\w{X}})_{|\w{D}}$, we have
$$\mathcal{O}_{\w{X}}(-K_{\w{X}})_{|\w{D}}=\mathcal{O}_{\w{X}}(-\w{D})_{|\w{D}}= 
L,$$
 hence
$\omega_{\w{D}}=L^{\otimes (-2)}$. Moreover 
the pair $(\w{D},L)$ is a Del Pezzo variety, see
\cite[(3.3)]{fuji2} and \cite[\S3.2]{fanoEMS} for the definition.

Notice that 
$\w{D}$ cannot be covered by curves having intersection $1$ with
$L$, because these would have anticanonical degree $1$ in $\w{X}$,
contradicting Rem.~\ref{SQM} (1). 

Set $d:=L^3$. 
If $d=1$, then by \cite[Th.~3.2.5 (i)]{fanoEMS}
$\w{D}$ is isomorphic to a 
hypersurface of degree $6$ in the weighted projective space
$\pr(3,2,1,1,1)$. 

Since $\pr(3,2,1,1,1)$ has two singular points\footnote{This can be
  seen for instance using toric geometry, see
  \cite[Th.~3.6]{conrads}.}, 
the generic
hypersurface is smooth; if in the smooth case $\w{D}$ is covered by curves having
intersection $1$ with 
$L$, the same must hold also in the singular case.

Hence suppose that 
$\w{D}$ is
smooth. By \cite[Prop.~3.2.4 (i)]{fanoEMS}  
the general element $S\in |L|$ is a smooth
surface
with
$-K_S=L_{|S}$ ample and
$(-K_S)^2=d=1$. Therefore $S$ is
 covered by curves of anticanonical degree $1$ (the
pencil $|-K_S|$)
 and
$\w{D}$ is covered by curves having
intersection $1$ with 
$L$, which gives a contradiction.

\medskip

If $d=2$, then by \cite[Prop.~3.2.4~(ii)]{fanoEMS} the linear system 
$|L|$
determines a double covering
$\pi\colon\w{D}\to\pr^3$
such that $L=\pi^*\mathcal{O}_{\pr^3}(1)$. For $i=1,2$ choose
$p_i\in\w{D}\cap l_i$, and let $C\subset\pr^3$ be a line through
$\pi(p_1)$ and $\pi(p_2)$. Set $C':=\pi^{-1}(C)\subset\w{D}$. Then
$p_1,p_2\in C'$, $\pi_*(C')=2C$ and 
$-K_{\w{X}}\cdot C'=(L\cdot C')_{\w{D}}=2$ (where
$(\ \cdot\ )_{\w{D}}$ is intersection in $\w{D}$). The curve $C'$ can
not be 
irreducible by Rem.~\ref{SQM} (1), 
but if it is reducible we get a curve of anticanonical
degree $1$ in $\w{X}$ containing one of the points $p_i$, which is
again impossible.

\medskip

Suppose now that $d\geq 3$. Then $L$ is very ample and gives
an isomorphism of $\w{D}$ with $V\subset \pr^{d+1}$ of degree $d$, see
\cite[Prop.~3.2.4~(ii)]{fanoEMS}.

If $d=3$ then $V$ is a cubic in $\pr^4$, thus it is covered by lines,
and  $\w{D}$ is covered by curves having
intersection $1$ with 
$L$. Similarly, if $d=4$, then 
by \cite[Th.~3.2.5~(iv)]{fanoEMS} 
$V$ is
the complete intersection of two
quadrics in $\pr^5$, and again it is covered by lines.

\medskip

Assume that $d\geq 5$. Then by \cite[(2.6)]{fuji2}
$V\subset\pr^{d+1}$ 
is not a cone over another variety.

If  $\w{D}$ is smooth, then it is a
 Fano $3$-fold of index
$2$, 
and by  \cite[Th.~3.3.1]{fanoEMS}
the possibilities for $\w{D}$ are: the blow-up of $\pr^3$ in a point,
$(\pr^1)^3$, $\pr_{\pr^2}(T_{\pr^2})$, or a linear section of
$G(1,4)\subset\pr^9$. 
 In all these cases it is
easy to see that $\w{D}$ is covered by curves having
intersection $1$ with 
$L$.

Suppose now that $\dim\Sing(\w{D})=0$. Then $\w{D}$ is normal, 
and by \cite[Th.~(2.1) and (2.9)]{Fujita86} we see that the singularities of
$\w{D}$ are ordinary double points; in particular $\w{D}$ has terminal
singularities. Therefore by \cite[Th.~11]{namikawa} $\w{D}$ has a
smoothing $T$, where $T$ is a smooth Fano $3$-fold with index $2$ and
anticanonical degree $8d$. By the previous case, we know that $T$ is
covered by curves of anticanonical degree $2$, hence the same holds
for $\w{D}$.

If $\dim\Sing(\w{D})=2$ then $\w{D}$ is not normal, and by
\cite[Th.~(2.1)]{Fujita86} $V$ is the projection of a smooth
variety of minimal degree in $\pr^{d+2}$. In particular $V$ is covered
by lines, and we are done.

If instead $\dim\Sing(\w{D})=1$ (so that $\w{D}$ is normal), 
we follow the construction in
\cite[(6), p.~150]{Fujita86}. Let $p_0\in \Sing(V)$, and set
$$W:=\overline{\bigcup_{q\in V\smallsetminus p_0}
  \overline{qp_0}}\subset\pr^{d+1},$$
where $\overline{pq}$ denotes the line through $p$ and $q$ in
$\pr^{d+1}$.  Notice that $\dim W=4$ and $W$ has degree $d-2$.
Set moreover
$$R:=\left\{p\in W\,|\,\overline{pq}\subset W\text{ for every }q\in W\smallsetminus
p\right\}$$ 
(so that $p_0\in R$).
By \cite[Lemma~(2)]{Fujita86},
 $R\subset\pr^{d+1}$ is a linear subspace, and if $M\subset W$ is a
section of $W$ with a generic linear subspace of dimension $d-\dim R$, 
then $W$ is the cone over
$M$ with vertex $R$.
By \cite[(6)]{Fujita86} $M$ is smooth and $R\subseteq\Sing(V)$,
therefore $\dim R\in\{0,1\}$. 

All the possibilities for $V$ are
listed in \cite[Th.~(2.9)]{Fujita86}. Since $\dim V=3$ and
$\dim\Sing(V)=1$, we see that the possibilities are: (vi), (si22i),
(si31i), (si211), (si21i-a), (si111o-d), and (si21i-b). In the cases (vi),
(si22i), (si31i), (si21i-a), and (si21i-b) we have $\dim R=1$ (see
\cite{Fujita86}, pages
155, 169, 170, and 163 respectively). 

In case (si111o-d) we have
$\dim R=0$, however this variety $V$ is the same as (si21i-a), see 
\cite[Remark on p.~167]{Fujita86}. By choosing the point
$p_0$ in a
one-dimensional component of $\Sing(V)$, we can reduce to the case
where $\dim R=1$. The case (si211) is analogous, see \cite[Remark on
  p.~171]{Fujita86}. 

Therefore $R$ is a line and $\dim M=2$. We still follow the
construction in \cite[(7)]{Fujita86}. Let $\w{P}\to\pr^{d+1}$ be the
blow-up along $R$, 
let $\w{V}\subset\w{P}$ and $\w{W}\subset\w{P}$ be transforms of $V$
and $W$ respectively,
and $\ph\colon\w{W}\to W$
 the induced morphism.

 Then $\w{W}$ is smooth and there is a 
 $\pr^2$-bundle structure $\w{W}\to M$ such that if
 $F\subset\w{W}$ is a fiber we have
$\ph^*(\mathcal{O}_W(1))_{|F}=\mathcal{O}_{\pr^2}(1)$.
On the other hand by \cite[(8)]{Fujita86} we also have
$\ph^*(\mathcal{O}_W(1))_{|F}= 
\mathcal{O}_{\w{W}}(\w{V})_{|F}$, therefore for a generic $F$ the
intersection 
$\w{V}\cap F$ is a line in $F$, and $\ph(\w{V}\cap F)$ is a line in
$V\subset\pr^{d+1}$.
This shows that $V$ is covered by lines, and concludes the proof.
\end{proof}
\begin{corollary}[Elementary divisorial rational contractions]\label{monaco}
Let $X$ be a Fano $4$-fold with $\rho_X\geq 6$, and consider an
elementary divisorial contraction $f\colon \w{X}\to Y$, where
$\w{X}$ is  a SQM
of $X$. Then $Y$ has at most isolated terminal and factorial singularities.
Moreover one of the following holds:
\begin{enumerate}[$(i)$]
\item $f$ is of type $(3,2)$, $X\dasharrow \w{X}$ is an isomorphism
  over $\Exc f$, and $\Exc(f)$ does not contain any exceptional plane;
\item 
$Y$ is smooth and it is a SQM of a Fano $4$-fold, and $f$ is the blow-up of a
smooth curve $C\subset Y$; moreover
 if $C_0\subset Y$ is an irreducible
curve with $C\cap C_0\neq\emptyset$ and $C_0\neq C$, then $-K_Y\cdot
C_0\geq 3$, except possibly for finitely many exceptions where $-K_Y\cdot
C_0=1$, $\Exc(f)\cdot\w{C}_0=1$, and $-K_{\w{X}}\cdot \w{C}_0=-1$
($\w{C}_0\subset\w{X}$ the transform of $C_0$); 
\item 
$Y$ is smooth and it is a SQM of a Fano $4$-fold, and
 $f$ is blow-up of a
 point $p\in Y$; moreover if $C_0\subset Y$ is an irreducible
curve with $p\in C_0$, then $-K_Y\cdot
C_0\geq 4$, except possibly for 
finitely many exceptions where $-K_Y\cdot
C_0=2$, $\Exc(f)\cdot\w{C}_0=1$, and $-K_{\w{X}}\cdot \w{C}_0=-1$;
\item 
$\Exc(f)$ is isomorphic to an
irreducible quadric and $p:=f(\Exc(f))$ is a point; 
moreover if $C_0\subset Y$ is an irreducible
curve with $p\in C_0$, then $-K_Y\cdot
C_0\geq 3$, except possibly for finitely many exceptions where $-K_Y\cdot
C_0=1$, $\Exc(f)\cdot\w{C}_0=1$, and $-K_{\w{X}}\cdot \w{C}_0=-1$.
\end{enumerate}
\end{corollary}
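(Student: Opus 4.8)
The plan is to read the statement off from Theorem~\ref{effective}, applied to the transform of $\Exc(f)$, and then to extract the numerical information about curves from the relation $f^{*}(-K_Y)=-K_{\w X}+a\,\w D$.

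First I would set $\w D:=\Exc(f)$ and let $D\subset X$ be its transform. Being the exceptional divisor of a divisorial contraction, $\w D$ is non-movable on $\w X$, hence by Rem.~\ref{nonmovable} $D$ is a non-movable prime divisor on $X$. I would also note that $f$ is necessarily $K$-negative: by Rem.~\ref{SQM}(2) any irreducible curve with $-K_{\w X}\cdot C\le 0$ is an exceptional line, and such rigid curves cannot cover the divisor $\w D=\Exc(f)$, so $-K_{\w X}\cdot\NE(f)>0$. Consequently the classification carried out in the proof of Theorem~\ref{effective} applies verbatim to $f$ itself---the only inputs being that $\w D$ is a non-movable prime divisor meeting every exceptional line positively, realized as the exceptional locus of a $K$-negative elementary divisorial contraction on a smooth SQM of a Fano $4$-fold. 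This places $f$ in exactly one of the cases $(i)$--$(iv)$ of Theorem~\ref{effective}, determines the isomorphism type of $\Exc(f)$, and fixes in each case the coefficient $a$ in $f^{*}(-K_Y)=-K_{\w X}+a\,\w D$, namely $a=2$ in cases $(ii)$ and $(iv)$ and $a=3$ in case $(iii)$. The singularity statement is part of the same classification: $Y$ is smooth in cases $(ii)$ and $(iii)$, in case $(iv)$ the image $f(\w D)$ is a single factorial terminal point, and in case $(i)$ the classification of type $(3,2)$ contractions gives at worst an isolated factorial terminal singularity.

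For the claim that $Y$ is a SQM of a Fano $4$-fold I would compare $\w X$ with the distinguished model $f_0\colon\w X_0\to Y_0$ produced by Theorem~\ref{effective}, on which $Y_0$ is Fano. Both $\w X$ and $\w X_0$ are SQMs of $X$ carrying $D$ as the locus of its unique $\w D$-negative extremal ray, so they differ only by flips of $\w D$-trivial small rays; equivalently, the failure of $-K_{\w X}+a\w D$ to be nef occurs only along the finitely many exceptional lines $l'$ with $\w D\cdot l'=0$, and $f(l')$ is then a curve with $-K_Y\cdot f(l')=-1$ disjoint from the centre. These $f(l')$ span the rays along which $Y$ and $Y_0$ differ, so $Y$ is a SQM of the Fano $4$-fold $Y_0$.

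The delicate step is the estimate for curves meeting the centre. Given an irreducible $C_0\subset Y$ through the centre, its strict transform $\w C_0$ meets $\w D=\Exc(f)$, so $\w D\cdot\w C_0\ge 1$, and I would write $-K_Y\cdot C_0=f^{*}(-K_Y)\cdot\w C_0=-K_{\w X}\cdot\w C_0+a\,(\w D\cdot\w C_0)$. By Rem.~\ref{SQM}(2), either $\w C_0$ is not an exceptional line, in which case $-K_{\w X}\cdot\w C_0\ge 1$ and hence $-K_Y\cdot C_0\ge a+1$ (that is $3,4,3$ in cases $(ii),(iii),(iv)$); or $\w C_0$ is one of the finitely many exceptional lines $l_i$, so $-K_{\w X}\cdot\w C_0=-1$ and $f$ maps $l_i$ isomorphically onto $C_0$. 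The point requiring care is that $\w D\cdot l_i=1$ exactly. I would obtain this from Rem.~\ref{intersection}, which gives $\w D\cdot l_i=-D\cdot C_{L_i}$ for a line $C_{L_i}$ in the corresponding exceptional plane $L_i\subset D$: since $L_i$ is a divisor in the $3$-fold $D$ with $\mathcal N_{L_i/X}\cong\mathcal O_{\pr^2}(-1)^{\oplus 2}$, the normal bundle sequence $0\to\mathcal N_{L_i/D}\to\mathcal N_{L_i/X}\to\mathcal O_X(D)_{|L_i}\to 0$ forces $\mathcal O_X(D)_{|L_i}\cong\mathcal O_{\pr^2}(-1)$, whence $D\cdot C_{L_i}=-1$ and $\w D\cdot l_i=1$. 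Therefore these finitely many curves satisfy $\Exc(f)\cdot\w C_0=1$, $-K_{\w X}\cdot\w C_0=-1$ and $-K_Y\cdot C_0=a-1$ (that is $1,2,1$), as claimed; the intermediate value $a$ cannot occur, since $-K_{\w X}\cdot\w C_0$ is never $0$.
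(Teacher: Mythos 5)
Your final paragraph (the degree estimate via $-K_Y\cdot C_0=f^*(-K_Y)\cdot\w{C}_0=(-K_{\w{X}}+a\w{D})\cdot\w{C}_0$ and Rem.~\ref{SQM}(2)) is sound, and in fact you do not need to prove $\w{D}\cdot\w{C}_0=1$ for the exceptional lines at all: if $\w{D}\cdot\w{C}_0\geq 2$ the curve already satisfies the main bound $-K_Y\cdot C_0\geq 2a-1\geq a+1$, so the dichotomy in the statement is automatic, and your normal-bundle computation (which tacitly assumes $D$ smooth along $L_i$, and a plane/line correspondence across a composite of flips that itself needs Rem.~\ref{SQM}(3) to justify) can be discarded. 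The genuine gap is upstream, in your second paragraph. The proof of Th.~\ref{effective} is not a statement about an arbitrary elementary divisorial contraction on an arbitrary SQM of $X$: it is run on the particular SQM produced by a Mori program for $D$, i.e.\ a composition of $D$-negative ($K$-negative) flips, and it is exactly this choice which guarantees the input you list, namely that the transform of $D$ has positive intersection with \emph{every} exceptional line (Rem.~\ref{portovenere}(2)), so that the lines counted in \eqref{oggi} actually meet $\w{D}$. That positivity is used at every step of the classification: to produce a fiber of $f$ meeting an exceptional line, to exclude the cases where $\Exc(f)$ is covered by curves of anticanonical degree $1$, to rule out $\mathcal{N}_{\w{D}/\w{X}}\cong\mathcal{O}_{\pr^3}(-2),\mathcal{O}_{\pr^3}(-3)$ and the remaining Del Pezzo divisors, and to prove $Y$ Fano. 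On the SQM $\w{X}$ you are given it can fail: there may be exceptional lines $l'$ with $\w{D}\cdot l'=0$ --- your own third paragraph invokes precisely such lines, contradicting the premise of your second paragraph --- and then $-K_Y\cdot f(l')=-1$, which is exactly why the corollary can only assert that $Y$ is a SQM of a Fano $4$-fold rather than Fano. So ``the classification applies verbatim to $f$'' is not available, and with it fall all the claims you extract from it: the isomorphism type of $\Exc(f)$, the smoothness of $Y$ in cases $(ii)$--$(iii)$, the value of $a$, and the case-$(i)$ assertion that $X\dasharrow\w{X}$ is an isomorphism over $\Exc(f)$.

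What is missing is the transfer argument that constitutes the paper's actual proof: apply Th.~\ref{effective} to $D\subset X$ to obtain a distinguished model $f_1\colon\w{X}_1\to Y_1$ contracting the transform of $D$; observe that the induced birational map $Y_1\dasharrow Y$ is an isomorphism in codimension $1$, hence a SQM of the (in cases $(ii)$--$(iv)$) Fano variety $Y_1$; and then verify that the blown-up curve or point, the exceptional lines $l_i\subset\w{X}_1$, and their images in $Y_1$ all lie in the open locus where $Y_1\dasharrow Y$ (respectively $\w{X}_1\dasharrow\w{X}$) is an isomorphism --- this uses Rem.~\ref{SQM}(1),(3) and the fact that the centre meets only finitely many curves of small anticanonical degree. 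Only after this does the local structure of $f_1$ carry over to $f$, giving a basis for your degree computations; in case $(i)$ a separate argument via Cor.~\ref{eco} and Rem.~\ref{dimension} is needed to see that $f$ is again of type $(3,2)$ and that the SQM is an isomorphism over $\Exc(f)$. Your third paragraph gestures at such a comparison, but the assertions it rests on --- that $\NE(\w{X})$ has a unique $\w{D}$-negative extremal ray, and that $\w{X}$ and $\w{X}_1$ differ only by flips of $\w{D}$-trivial small rays --- are unproved and not obviously true (a priori $\w{D}$ could contain exceptional lines, giving small $\w{D}$-negative rays), so they cannot substitute for the missing step.
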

\begin{proof}
Let $D\subset X$ be the transform of $\Exc(f)$. Then $D$ is a
non-movable  prime divisor, and 
 by Th.~\ref{effective}
there is a diagram
$$\xymatrix{X\ar@{-->}[r]&{\w{X}_1}\ar@{-->}[r]\ar[d]_{f_1}
  &{\w{X}}\ar[d]^{f}\\
& Y_1\ar@{-->}[r]&Y}$$
where $X\dasharrow\w{X}_1$ is a SQM and $f_1\colon \w{X}_1\to Y_1$ is 
an elementary divisorial contraction with exceptional divisor the
transform of $\Exc(f)$, and satisfying one of the conditions of Th.~\ref{effective}. 
The birational map $Y_1\dasharrow Y$ is an isomorphism in
codimension $1$, \emph{i.e.}\ it is a SQM.

The cases $(i)$ - $(iv)$ of the statement correspond to the same cases
of Th.~\ref{effective};
we will consider 
$(i)$ and $(ii)$, the other ones being completely analogous.

If $D$ is of type $(3,2)$, then $X=\w{X}_1$, $f_1$ is an elementary
contraction of type 
$(3,2)$, and $D$ does not contain any exceptional plane; in particular 
$\dim\N(D,X)=\dim\N(\Exc(f),\w{X})$ by Rem.~\ref{dimension}.

If the map $X\dasharrow\w{X}$ is not an isomorphism, then
Cor.~\ref{eco} yields that $c_X\leq 2$. Hence
$$\dim\N(\Exc(f),\w{X})=\dim\N(D,X)\geq\rho_X-2\geq 4,$$
and $f$ cannot be of type $(3,0)$ nor $(3,1)$. Therefore $f$ is of
type $(3,2)$ and $\Exc(f)$ is covered by curves of anticanonical
degree $1$. By Rem.~\ref{SQM} (1) the map $X\dasharrow\w{X}$ is an isomorphism
over $\Exc(f)$, and  we get $(i)$.

\medskip

Suppose now that $D$ is of type $(3,1)$. Then $Y_1$ is smooth and
Fano, so that the birational map $Y_1\dasharrow Y$ is an isomorphism
outside a disjoint union  of exceptional planes in $Y_1$, see
Rem.~\ref{SQM}. Moreover
 $f_1$ is the blow-up of a smooth curve $C_1\subset
Y_1$, and
$(f_1)^*(-K_{Y_1})=-K_{\w{X}_1}+2\Exc(f_1)$.

Let $l_1,\dotsc,l_r\subset\w{X}_1$ be the exceptional lines. Then
$f_1(l_i)$ intersects $C_1$ and $-K_{Y_1}\cdot f(l_i)=2\Exc(f_1)\cdot
l_i-1$, hence $-K_{Y_1}\cdot f(l_i)\geq 3$ unless $-K_{Y_1}\cdot
f(l_i)=\Exc(f_1)\cdot 
l_i=1$.
On the other hand let
 $C_2\subset Y_1$ be an irreducible curve  different from 
$C_1,f_1(l_1),\dotsc,f_1(l_r)$. If
 $C_1\cap C_2\neq\emptyset$, then 
$-K_{Y_1}\cdot C_2\geq 3$.

Let now $L\subset Y_1$ be an exceptional plane. Since
 $C_1$ can intersect at most finitely many curves of
anticanonical degree $1$, we have $C_1\cap L=\emptyset$, and
$f_1^{-1}(L)\subset\w{X}_1$ is still an exceptional plane. Then
$(l_1\cup\cdots\cup l_r)\cap f_1^{-1}(L)=\emptyset$ by Rem.~\ref{SQM} (3),
thus $(f_1(l_1)\cup\cdots\cup f_1(l_r))\cap L=\emptyset$.

 We conclude that $C_1\cup
f_1(l_1)\cup\cdots\cup f_1(l_r)$ is contained in the open subset
where $Y_1\dasharrow Y$ is an isomorphism, and $\Exc(f_1)\cup
l_1\cup\cdots\cup l_r$ is contained in the open subset
where $\w{X}_1\dasharrow \w{X}$ is an isomorphism.

Therefore $f$ is the blow-up of a smooth curve $C\subset Y$, and $C$
does not meet any exceptional line in $Y$. Let $C_0\subset Y$ be an
irreducible curve which meets $C$, $C_0\neq C$, and let $C_0'\subset
Y_1$ be its transform. We have $-K_Y\cdot C_0\geq -K_{Y_1}\cdot C_0'$
by Rem.~\ref{SQM}, so that either  $-K_Y\cdot C_0\geq 3$, or
$C_0'=f_1(l_i)$ for some $i\in\{1,\dotsc,r\}$ and
 $-K_{Y_1}\cdot C_0'=\Exc(f_1)\cdot
l_i=1$; this gives the statement.
\end{proof}
We conclude this section showing that when the
cones $\Mov(X)$ and $\Eff(X)$ share a one-dimensional face, we can
easily bound the Picard number of $X$. As a consequence, when $\rho_X$
is large, $X$ contains plenty of non-movable prime divisors.
\begin{proposition}\label{lorenzo}
Let $X$ be a Fano $4$-fold, and 
suppose that there exists a movable prime divisor $D$ whose class
belongs to a one-dimensional face of $\Eff(X)$.   

Then $\rho_X\leq 11$. Moreover if $\rho_X=11$ then 
 $D$ is a fiber
 of a quasi-elementary contraction $X\to\pr^1$, with general fiber 
 $\pr^1\times S$, 
 where $S$ is a Del Pezzo surface with $\rho_S=9$.
\end{proposition}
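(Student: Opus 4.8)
The plan is to read the hypothesis off the fan $\M$, produce a quasi-elementary rational contraction via Cor.~\ref{tobia}, and then bound $\rho_X$ by the Picard number of a general fiber.

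First I would observe that $\R_{\geq 0}[D]$ is simultaneously a one-dimensional face of $\Eff(X)$ (by hypothesis) and of $\Mov(X)$. Indeed $D$ is movable, so $[D]\in\Mov(X)$ and hence $\R_{\geq 0}[D]\subseteq\Mov(X)$; since $\Mov(X)\subseteq\Eff(X)$, intersecting the extremal ray $\R_{\geq 0}[D]$ of $\Eff(X)$ with the subcone $\Mov(X)$ produces a face of $\Mov(X)$, which is all of $\R_{\geq 0}[D]$. Thus we have a one-dimensional face of $\Mov(X)$ contained in a one-dimensional face of $\Eff(X)$, and Cor.~\ref{tobia} (with $r=1$) yields a quasi-elementary rational contraction $f\colon X\dasharrow Y$ with $\rho_Y=1$, whose associated cone $\sigma\in\M$ must equal $\R_{\geq 0}[D]$ since $\dim\sigma=1$. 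In particular $[D]$ is a positive multiple of $f^*(\text{ample})$.

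Next I would factor $f$ as $X\dasharrow\w{X}\stackrel{\w f}{\to}Y$ with $\w X$ smooth and $\w f$ a $K$-negative quasi-elementary contraction (Rem.~\ref{ratsing}), so that a general fiber $F$ of $\w f$ is a smooth Fano variety of dimension $4-\dim Y$, and by Rem.~\ref{pippo} one has $\rho_X-1=\rho_{\w X}-\rho_Y\leq\rho_F$. I then bound $\rho_X$ according to $\dim Y\in\{1,2,3\}$: if $\dim Y=3$ then $F\cong\pr^1$ and $\rho_X\leq 2$; if $\dim Y=2$ then $F$ is a Del Pezzo surface, $\rho_F\leq 9$, so $\rho_X\leq 10$; if $\dim Y=1$ then $F$ is a Fano $3$-fold, $\rho_F\leq 10$, so $\rho_X\leq 11$. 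This establishes the bound $\rho_X\leq 11$.

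For the equality $\rho_X=11$ the case analysis forces $\dim Y=1$ and $\rho_F=10$. If $f$ were not regular, Cor.~\ref{pluto} would give $\rho_X\leq 10$; hence $f$ is regular, $X=\w X$, and $f\colon X\to Y$ is a genuine contraction. As $X$ is rationally connected we get $Y\cong\pr^1$, and since the only Fano $3$-fold with $\rho=10$ is $\pr^1\times S$ with $S$ a Del Pezzo surface of $\rho_S=9$ (see \cite[p.~141]{fanoEMS}), the general fiber has the desired form. It remains to identify $D$ with a fiber: from $[D]\in\R_{\geq 0}f^*(\Nef(Y))$ we get $[D]\equiv\lambda[F]$ for some $\lambda>0$, so $D\cdot C=0$ for every curve $C$ contracted by $f$. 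I expect this final numerical-to-geometric step to be the main subtlety: if $D$ dominated $Y$, then $D\cap F$ would be a nonzero effective divisor on $F$ with class $[D]|_F\equiv\lambda[F]|_F\equiv 0$, impossible on the projective variety $F$; hence $f(D)$ is a point and $D$, being a prime divisor moving in a covering family, is a general fiber of $f$.
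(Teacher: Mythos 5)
Your proposal is correct and follows essentially the same route as the paper: the common one-dimensional face of $\Mov(X)$ and $\Eff(X)$ is fed into Cor.~\ref{tobia} to produce a quasi-elementary rational contraction with $\rho_Y=1$, and the bound together with the equality case is then extracted exactly as in the proof of Cor.~\ref{pluto} (case analysis on $\dim Y$, with non-regularity excluded by Cor.~\ref{pluto} itself), which is precisely what the paper does. One caveat on your final clause: movability of $D$ does not literally mean that $D$ moves in a covering family, and $D$ need only be \emph{a} fiber rather than a general one; the clean way to finish, after your (correct) argument that $f(D)$ is a point, is to note that numerical and linear equivalence coincide on a Fano manifold, so $aD\sim bF$ for suitable positive integers, and since $f_*\mathcal{O}_X=\mathcal{O}_{\pr^1}$ every member of $|bF|$ is a sum of scheme-theoretic fibers, which forces $D$ to be the support of a single fiber of $f$.
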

\begin{proof}
The cone $\R_{\geq 0}[D]$ is a common  one-dimensional face of
 $\Mov(X)$ and $\Eff(X)$. By Cor.~\ref{tobia}, this implies the
existence of a quasi-elementary rational contraction
 $f\colon X\dasharrow Y$ with
$\rho_Y=1$. 
 
If $\dim Y=3$, then $f$ is elementary and $\rho_X=2$ (see Rem.~\ref{pippo}). 

Assume that $\dim Y\leq 2$.
If $f$ is not regular, then $\rho_X\leq 10$ by Cor.~\ref{pluto}. 
If $f$ is regular, the statement follows as in the proof of
Cor.~\ref{pluto}.
\end{proof}
\begin{corollary}
Let $X$ be a Fano $4$-fold with $\rho_X\geq 12$. 
Then $\Eff(X)$ is generated by classes of 
non-movable prime
divisors; in particular $X$ contains al least $\rho_X$
such divisors.
\end{corollary}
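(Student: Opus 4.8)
The plan is to combine Prop.~\ref{lorenzo} with the basic structure of $\Eff(X)$ as a polyhedral cone. Since $X$ is a Fano manifold, it is a Mori dream space, so $\Eff(X)$ is a polyhedral cone of dimension $\rho_X$; moreover it is pointed, being the cone of effective classes on a projective variety, and—as already used in the proof of Lemma~\ref{MFS}—it is generated by the classes of finitely many prime divisors $D_1,\dots,D_r$.

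The key step is to show that every one-dimensional face (extremal ray) of $\Eff(X)$ is generated by the class of a \emph{non-movable} prime divisor. Indeed, since $\Eff(X)=\sum_i\R_{\geq 0}[D_i]$, any extremal ray $\sigma$ must be spanned by one of the generators $[D_i]$ (an extremal ray of a finitely generated cone is always spanned by one of its generators). Thus $[D_i]$ lies on a one-dimensional face of $\Eff(X)$. If $D_i$ were movable, then Prop.~\ref{lorenzo} would force $\rho_X\leq 11$, contradicting the hypothesis $\rho_X\geq 12$. Hence $D_i$ is non-movable, and $\sigma=\R_{\geq 0}[D_i]$ is generated by a non-movable prime divisor.

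Because a pointed polyhedral cone is the conical hull of its extremal rays, and each extremal ray of $\Eff(X)$ is spanned by the class of a non-movable prime divisor, it follows immediately that $\Eff(X)$ is generated by classes of non-movable prime divisors. For the final count, I would observe that a pointed polyhedral cone of dimension $\rho_X$ in $\Nu(X)$ has at least $\rho_X$ extremal rays, since these rays span the $\rho_X$-dimensional ambient space. By the bijection in Rem.~\ref{nonmovable}, distinct one-dimensional faces of $\Eff(X)$ not contained in $\Mov(X)$ correspond to distinct non-movable prime divisors; and we have just seen that \emph{every} one-dimensional face of $\Eff(X)$ is of this type, being generated by a non-movable class and therefore not contained in $\Mov(X)$. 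Hence $X$ contains at least $\rho_X$ non-movable prime divisors.

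The substantive input here is entirely Prop.~\ref{lorenzo}, which eliminates movable classes from the extremal rays once $\rho_X\geq 12$; the remaining ingredients are the two elementary convex-geometry facts (that an extremal ray of a finitely generated cone is spanned by one of the generators, and that a pointed $d$-dimensional polyhedral cone has at least $d$ extremal rays), both standard. So I do not expect a serious obstacle—the only point deserving a line of care is checking that the face on which $[D_i]$ sits is genuinely one-dimensional, which is automatic once $\sigma$ is taken to be an extremal ray.
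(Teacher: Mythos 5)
Your proof is correct and is essentially the argument the paper intends: the corollary is stated without proof precisely because it follows, as you show, from Prop.~\ref{lorenzo} (ruling out movable classes on extremal rays of $\Eff(X)$ once $\rho_X\geq 12$), the bijection of Rem.~\ref{nonmovable}, and the standard convex-geometry facts about the pointed polyhedral cone $\Eff(X)$. Your added care about extremal rays being spanned by prime-divisor generators and the count of at least $\rho_X$ rays is exactly the implicit content.
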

\section{Rational contractions of fiber type of Fano $4$-folds}\label{rcft}
\subsection{Quasi-elementary rational contractions onto
  surfaces}\label{surf}
In this section we study Fano $4$-folds having a quasi-elementary
rational contraction onto a surface. 
First of all let us recall what happens in the case of a regular contraction.
\begin{proposition*}[\cite{fanos}, Th.\ 1.1 (i)]
Let $X$ be a Fano $4$-fold and $f\colon X\to S$ a quasi-elementary
contraction onto a surface.
Then $\rho_S\leq 9$,  $\rho_X\leq 18$, and $\rho_X=18$ only if $X$ is
a product of surfaces.

If $f$ is elementary, then $\rho_X\leq 10$, with equality only
if $X\cong\pr^2\times S$.
\end{proposition*}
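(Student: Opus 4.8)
The plan is to bound $\rho_S$ and the Picard number of a general fiber separately, then combine them, dealing with the equality cases at the end. First I would collect the easy inputs. Since $f$ is quasi-elementary and regular onto a surface, $S$ is smooth by Cor.~\ref{target}. Let $F\subset X$ be a general fiber; then $\dim F=2$ and, by adjunction, $-K_F=(-K_X)_{|F}$ is ample, so $F$ is a smooth Del Pezzo surface and $\rho_F\leq 9$. Moreover, $f$ being quasi-elementary, Rem.~\ref{pippo} gives $\rho_X-\rho_S\leq\rho_F\leq 9$. Thus the whole statement reduces to proving $\rho_S\leq 9$, together with an analysis of the two equality cases.

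The heart of the matter is to show that $S$ is a Del Pezzo surface, for then $\rho_S\leq 9$ is classical. The numerical mechanism available is the following: by Prop.~\ref{sabrina}$(i)$ quasi-elementarity means $\N(F,X)=\ker f_*$, so dually (Rem.~\ref{aggiunta}) $f^*(\Nu(S))=\N(F,X)^{\perp}$ is exactly the kernel of the restriction $\Nu(X)\to\Nu(F)$; in other words, a divisor class on $X$ restricts to zero on the general fiber if and only if it is pulled back from $S$. Since $-K_X$ is ample, its restriction $-K_F$ is ample on $F$, and I would leverage this to force positivity of $-K_S$: for an irreducible curve $\gamma\subset S$ one produces an irreducible curve $C\subset X$ dominating $\gamma$ whose $(-K_X)$-degree, after subtracting the fiber-direction contribution (which lies in $\N(F,X)=\ker f_*$), descends to give $-K_S\cdot\gamma>0$. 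This ``base is Fano'' statement is the main obstacle: the numerical reduction above does not suffice by itself, and one must control horizontal curves of small anticanonical degree, exploiting decisively that every $f$-contracted class is a fiber class. It is the regular-contraction analogue of the positivity results on which the rest of the paper rests, and it is precisely where the quasi-elementary hypothesis is genuinely used (compare the non-regular analogue Cor.~\ref{pluto}).

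Granting $\rho_S\leq 9$, the bounds follow at once: in the quasi-elementary case $\rho_X\leq\rho_S+9\leq 18$, and in the elementary case $\rho_X-\rho_S=1$ gives $\rho_X=\rho_S+1\leq 10$.

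It remains to analyse equality. If $\rho_X=18$, then necessarily $\rho_S=9$, $\rho_F=9$, and equality holds in $\rho_X-\rho_S\leq\rho_F$; the latter means $i_*\colon\N(F)\to\N(X)$ is injective, so that $\N(X)=\ker f_*\oplus V$ with $\ker f_*=i_*(\N(F))\cong\N(F)$ and $V$ mapping isomorphically onto $\N(S)$. This maximal numerical rigidity, together with the fact that both $F$ and $S$ are Del Pezzo surfaces of Picard number $9$, forces the fibration to be isotrivial and to split, so that $X$ is a product of surfaces; I expect this implication to require a separate rigidity argument and to be the delicate point of the ``only if''. In the elementary case, $\rho_X=10$ forces $\rho_S=9$ and $\dim\N(F,X)=\rho_X-\rho_S=1$, i.e.\ relative Picard number one; the only Del Pezzo surface compatible with this is $F\cong\pr^2$, and the same rigidity then yields $X\cong\pr^2\times S$.
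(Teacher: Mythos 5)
Your setup is fine and matches the obvious reductions: $S$ is smooth (Cor.~\ref{target}), the general fiber $F$ is a Del Pezzo surface with $\rho_F\leq 9$, and Rem.~\ref{pippo} gives $\rho_X\leq\rho_S+\rho_F$, so everything hinges on $\rho_S\leq 9$. But that is exactly where you stop proving things. Showing that $-K_S$ is ample \emph{is} the content of the quoted result, and your sketch for it does not work as stated: there is no identity expressing $-K_S\cdot\gamma$ as the $(-K_X)$-degree of a horizontal curve ``minus a fiber-direction contribution'', because the discrepancy between $K_X$ and $f^*K_S$ is the relative canonical class $K_{X/S}$, whose degree on horizontal curves is not controlled by classes in $\N(F,X)=\ker f_*$ (it can be arbitrarily positive on badly chosen horizontal curves). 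Note that the present paper does not reprove this proposition --- it is imported from \cite{fanos} --- but its rational analogue, Prop.~\ref{S}, shows what the genuine mechanism is, and it is dual to yours: for each elementary contraction $g\colon S\to S_1$ with contracted curve $C$, one forms the effective divisor $D=f^*(C)=f^{-1}(C)$ in $X$ (a non-movable prime divisor, by quasi-elementarity and \cite[Lemma~3.9]{fanos}), and analyzes the $K$-negative extremal rays/contractions of $X$ on which $D$ is negative; the classification of the resulting divisorial contractions (in this paper, Th.~\ref{effective}) is what forces $-K_S\cdot C\geq 0$, resp.\ $>0$. Nothing in your outline substitutes for this step.

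The equality analysis has the same defect, plus an actual error. That $\rho_X=18$ forces $X$ to be a product is left by you as an expected ``rigidity argument'', so it is simply unproven. Worse, in the elementary case your inference from $\dim\N(F,X)=\rho_X-\rho_S=1$ to $F\cong\pr^2$ is false: $\dim\N(F,X)$ is the rank of the image of $\N(F)\to\N(X)$, not $\rho_F$. For example, a smooth divisor of bidegree $(1,2)$ in $\pr^2\times\pr^3$ is a Fano $4$-fold with $\rho_X=2$, and the projection to $\pr^2$ is an elementary (hence quasi-elementary) contraction whose general fiber is $\pr^1\times\pr^1$, so $\rho_F=2$ while $\dim\N(F,X)=1$ (the two rulings are numerically identified in $X$). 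So the characterizations of the equality cases cannot be read off from the numerical framework you set up; they require the finer geometric analysis carried out in \cite{fanos}, which your proposal does not supply.
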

Here is the result in the rational case.
\begin{proposition}\label{S}
Let $X$ be a Fano $4$-fold 
and $f\colon X\dasharrow S$ a
quasi-elementary rational contraction onto a surface. Assume that $f$ is not
a morphism. 

If $f$ is not elementary, then $\rho_S\leq 9$ and $\rho_X\leq 17$. 

If $f$ is elementary, then $\rho_X\leq 11$.
\end{proposition}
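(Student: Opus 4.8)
The plan is to reduce $f$ to a regular fibration and then bound $\rho_S$ through the geometry of the surface $S$. By Rem.~\ref{ratsing} I factor $f$ as $X\dasharrow\w{X}\stackrel{\w{f}}{\to}S$, where $X\dasharrow\w{X}$ is a SQM (so $\w{X}$ is smooth and, since $f$ is not a morphism, contains at least one exceptional line) and $\w{f}$ is a $K$-negative quasi-elementary contraction. By Cor.~\ref{target} the surface $S$ is smooth; since $X$ is Fano it is rationally connected, hence so is its image $S$, and a smooth rationally connected surface is rational, so Noether's formula gives $K_S^2=10-\rho_S$. Thus bounding $\rho_S$ is equivalent to bounding $K_S^2$ from below. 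Finally, as $f$ is quasi-elementary and not regular, Cor.~\ref{pluto} yields $\rho_X\leq\rho_S+8$.

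The central step is to show that $-K_S$ is nef; granting this, $K_S^2\geq 0$ and hence $\rho_S\leq 10$. Suppose instead that $C\subset S$ is an irreducible curve with $-K_S\cdot C<0$. I would pull $C$ back under $\w{f}$: the general fibre $F$ of $\w{f}$ is a del Pezzo surface, and exactly as in the proof of Cor.~\ref{pluto} the presence of an exceptional line forbids the fibres of $\w{f}$ over $C$ from being covered by curves of anticanonical degree $1$. On the other hand, the del Pezzo fibration $\w{f}^{-1}(C)\to C$ together with the negativity $-K_S\cdot C<0$ should produce, via adjunction along the fibres, irreducible curves in $\w{X}$ of anticanonical degree $1$ meeting an exceptional line, contradicting Rem.~\ref{SQM}(1). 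This is the part I expect to be most delicate: it requires choosing a suitable multisection of $\w{f}^{-1}(C)\to C$, controlling its relative anticanonical degree, and ensuring that it actually meets one of the exceptional lines---precisely the kind of analysis carried out in the proofs of Cor.~\ref{pluto} and Th.~\ref{effective}. With $-K_S$ nef and $\rho_S\leq 10$ in hand, the elementary case is immediate: there $\rho_X-\rho_S=1$, so $\rho_X=\rho_S+1\leq 11$.

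For the non-elementary case I must improve the bound to $\rho_S\leq 9$, i.e. show $K_S^2\geq 1$, equivalently that $-K_S$ is big and not merely nef. The only case to exclude is $K_S^2=0$, which forces $\rho_S=10$: then $-K_S$ is nef with $K_S^2=0$, so $S$ is a relatively minimal rational (quasi-)elliptic surface, and $|-mK_S|$ defines a genus-one fibration $S\to\pr^1$ whose fibre class spans the unique extremal ray of $S$ meeting $-K_S$ trivially. I would argue that this rigidity is incompatible with $\rho_X-\rho_S\geq 2$: the general fibre $F$ of $\w{f}$ is a del Pezzo surface with $\rho_X-\rho_S\leq\rho_F\leq 8$ (Cor.~\ref{pluto}), and confronting its structure with the genus-one fibration on $S$ should force the relative Picard number of $\w{f}$ to be $1$, making $f$ elementary and contradicting the hypothesis. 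Hence $K_S^2\geq 1$, so $\rho_S\leq 9$ and $\rho_X\leq\rho_S+8\leq 17$ by Cor.~\ref{pluto}. Proving this last implication cleanly is the second point requiring care, and I would approach it through the interplay between the del Pezzo fibre $F$ and the elliptic structure on the base.
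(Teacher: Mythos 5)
Your skeleton agrees with the paper's: factor $f$ as $X\dasharrow\w{X}\stackrel{\w{f}}{\to}S$ via Rem.~\ref{ratsing}, note $S$ is a smooth rational Mori dream surface (Cor.~\ref{target}), get $\rho_X\leq\rho_S+8$ from Cor.~\ref{pluto}, and reduce everything to showing $-K_S$ is nef (resp.\ ample in the non-elementary case). But both of the steps you flag as ``delicate'' are genuine gaps, and neither admits the direct argument you sketch. For nefness, there is no mechanism by which $-K_S\cdot C<0$ produces, ``via adjunction along the fibres,'' irreducible curves of anticanonical degree $1$ in $\w{X}$ meeting an exceptional line; adjunction controls degrees of curves \emph{inside} fibers, not how horizontal negativity on the base propagates to vertical curves. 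The paper's actual route is entirely different: it works ray by ray on $\NE(S)$ (assuming WLOG $\rho_X\geq 6$, $\rho_S\geq 4$), observes that the curve $C$ contracted by an elementary contraction $g\colon S\to S_1$ pulls back to a non-movable prime divisor $D=(\w{f})^*(C)$ (Rem.~\ref{torino}), and exploits the dimension count
$$\dim\N(D,\w{X})\leq 1+\rho_X-\rho_S\leq\rho_X-3<\rho_X-2\leq\dim\N(D_X,X),$$
where the last inequality uses $c_X\leq 2$ (Cor.~\ref{eco}, available precisely because $f$ is not regular). This forces $D_X$ to contain an exceptional plane, excludes type $(3,2)$ in Th.~\ref{effective}, and then the classification of non-movable prime divisors into types $(3,1)$, $(3,0)^{\pr^3}$, $(3,0)^Q$, together with Rem.~\ref{factor}, Cor.~\ref{monaco}, and Cor.~\ref{target} applied on the other side of the divisorial contraction, yields $-K_S\cdot C\in\{0,1\}$. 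Your sketch contains no substitute for this machinery, which is the mathematical core of the proposition.

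The second gap is worse: in the non-elementary case you claim that a genus-one fibration on $S$ (arising if $K_S^2=0$, $\rho_S=10$) would ``force the relative Picard number of $\w{f}$ to be $1$,'' but no reason is given and none is apparent --- there is no direct incompatibility between a del Pezzo fibration of relative Picard number $\geq 2$ and an elliptic base. What the paper actually does is sharper and local: after the nefness analysis, the only way $-K_S$ could fail to be ample is a $(-2)$-curve $C$ arising in the type-$(3,0)^Q$ case, and this is excluded by explicitly constructing an irreducible curve $C'\subset\w{X}$ with $D\cdot C'=-1$ (using the fiber of $\w{f}_1$ through the singular point, properties (from Cor.~\ref{monaco}) of curves through that point, Rem.~\ref{portovenere}, and the intersection bookkeeping of Rem.~\ref{intersection}), which forces $C^2=-1$. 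So while your reduction of the two assertions to nefness/ampleness of $-K_S$ is correct, the proofs of those two facts are missing, and the plans you propose for them would not close.
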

\begin{proof}
 When $f$ is elementary $\rho_X=\rho_S+1$, while in general
 $\rho_X\leq \rho_S+8$ by Cor.~\ref{pluto}. 
Therefore
we
have to show that $\rho_S\leq 10$ if $f$ is elementary, and
$\rho_S\leq 9$ otherwise.

The surface $S$ is smooth and is a Mori dream space by
Cor.~\ref{target}; moreover $S$
is rational because $X$ is rationally connected.

We  assume that $\rho_X\geq 6$ and $\rho_S\geq 4$. Under these
conditions, we are going to show that $-K_S$ is nef, and ample
when $f$ is not elementary; since $S$ is a smooth rational surface, 
this implies the statement. 
Notice that in order to show that $-K_S$ is nef (respectively, ample), 
it is enough to show
that $-K_S\cdot\sigma\geq 0$ (respectively, $>0$)
for every extremal ray $\sigma$ of
$\NE(S)$; moreover, every such extremal ray corresponds to an
elementary contraction of $S$.

Thus let $g\colon S\to S_1$ be an elementary
contraction.   The surface
 $S_1$ has rational singularities by
Rem.~\ref{ratsing}, and
since $\rho_S\geq 4$, $g$ is birational.

Consider a factorization $X\dasharrow \w{X}\stackrel{\w{f}}{\to}S$ of
$f$ as in Rem.~\ref{ratsing}, and let 
 $C\subset S$ be the irreducible curve contracted by $g$. 
Since $C$ is a non-movable prime divisor in $S$,
by Rem.~\ref{torino}
$D:=(\w{f})^*(C)=(\w{f})^{-1}(C)$ is a non-movable
 prime divisor in $\w{X}$.  
We have $(\w{f})_*(\N(D,\w{X}))=\R[C]\subset\N(S)$, hence
$$
\dim\N(D,\w{X})\leq 1+\dim\ker (\w{f})_*=1+\rho_X-\rho_S\leq\rho_X-3.
$$

Let $D_X\subset X$ be the transform of
$D$.
Since $f$ is not regular, $X$ has a small elementary contraction,
and Cor.~\ref{eco} gives $c_X\leq 2$, hence $\dim\N(D_X,X)\geq\rho_X-2$.
We apply   Th.~\ref{effective} to $D_X$, and consider the possible types.

We notice at once that $\dim\N(D_X,X)>\dim\N(D,\w{X})$, therefore by 
Rem.~\ref{dimension} $D_X$ must contain some exceptional plane. This
implies that
 $D_X$ cannot be of type $(3,2)$ (see Th.~\ref{effective} $(i)$).

We apply Rem.~\ref{factor} to $g\circ \w{f}\colon\w{X}\to S_1$ and $D$, and
get a diagram: 
$$\xymatrix{
{\w{X}}\ar[d]_{\w{f}}\ar@{-->}[r]^h&
{\widehat{X}} \ar[r]^k& {\w{X}_1}\ar[ld]^{\w{f}_1}\\
{S}\ar[r]_{g}&{S_1}& }$$
where $k$ is an elementary
divisorial contraction        
with exceptional divisor the transform of $D$, $\w{f}_1$ is a
contraction, and $h$ is a birational map which factors as a sequence
of $D$-negative flips.
Notice that $\w{X}_1$ is factorial by Cor.~\ref{monaco}, in particular
it is again a Mori dream space (see Rem.~\ref{targetMDS}). 

We show that
$\w{f}_1\colon\w{X}_1\to S_1$ is quasi-elementary.
Let $F\subset\w{X}$ be a general fiber of $\w{f}$, and consider its
transforms $\widehat{F}\subset\widehat{X}$ and $F_1\subset
\w{X}_1$. Since the indeterminacy locus of $h$
 is contained in $D$, it is disjoint from
$F$; therefore $F$, $\widehat{F}$, and $F_1$ are isomorphic, and $F_1$
is a general fiber of $\w{f}_1$.
By Rem.~\ref{divisors} (1) we get
$$\dim\N(\widehat{F},\widehat{X})=\dim\N(F,\w{X})=\rho_X-\rho_S
=\rho_{\w{X}_1}-\rho_{S_1}=\dim\ker(\w{f}_1)_*$$
(we are using that $\w{f}$ is quasi-elementary, see
Prop.~\ref{sabrina} $(ii)$).
  
On the other hand $\widehat{F}\cap\Exc(k)=\emptyset$, therefore
$\N(\widehat{F},\widehat{X})\subseteq\Exc(k)^{\perp}$ and
$\NE(k)\not\subset\N(\widehat{F},\widehat{X})$. We conclude that
$k_*\colon\N(\widehat{X})\to\N(\w{X}_1)$ 
is injective on $\N(\widehat{F},\widehat{X})$, 
and since $\N(F_1,\w{X}_1)=k_*(\N(\widehat{F},\widehat{X}))$, we get
$\dim\N(F_1,\w{X}_1)=\dim\N(\widehat{F},\widehat{X})
=\dim\ker(\w{f}_1)_*$,
and $\w{f}_1$ is quasi-elementary by Prop.~\ref{sabrina} $(ii)$.

\medskip

If $D_X$ is of type $(3,1)$ or $(3,0)^{\pr^3}$, then $\w{X}_1$ is smooth and
it is a SQM of a Fano $4$-fold $X_1$ by Cor.~\ref{monaco}. 
Since $X_1\dasharrow S_1$ is a quasi-elementary rational contraction, 
Cor.~\ref{target} implies that $S_1$ is smooth.
Hence $g$ is the blow-up of a smooth point, and $-K_S\cdot C=1$.

\medskip

Suppose now that $D_X$ is of type $(3,0)^Q$.
We show that $\w{f}_1$ is $K$-negative. 

By contradiction, suppose that
there exists an irreducible curve $C_0\subset\w{X}_1$ such that
$\w{f}_1(C_0)=\{pt\}$ and $-K_{\w{X}_1}\cdot C_0\leq 0$. By
Cor.~\ref{monaco}, $C_0$ cannot contain the singular point $p:=k(\Exc(k))$,
therefore $C_0=k(l)$ where $l\subset\widehat{X}$ is an irreducible
curve, disjoint from $\Exc(k)$, with $-K_{\widehat{X}}\cdot l\leq
0$. By Rem.~\ref{SQM} (2), $l$ is an exceptional line.
We need the following.
\begin{remark}\label{portovenere}
Let $X$ be a Fano $4$-fold and consider a diagram:
\stepcounter{thm}
\begin{equation}\label{diagram}
\xymatrix{
&X\ar@{-->}[dl]_{\ph}\ar@{-->}[dr]^{\psi}&\\
{\w{X}}\ar@{-->}[rr]^{h}&&{\widehat{X}}}\end{equation}
where $\ph$ and $\psi$ are SQMs and $h:=\psi\circ\ph^{-1}$.
 Let $l\subset\widehat{X}$ be an exceptional line. 
\begin{enumerate}[(1)]
\item
Either $l\subset\dom(h^{-1})$, or 
$l\cap\dom(h^{-1})=\emptyset$.
\item 
Let $D$ be a divisor in $\w{X}$, $\widehat{D}$ its transform in
$\widehat{X}$, 
and suppose that $h$ factors as a
sequence of $D$-negative flips. 
If $l\cap\dom(h^{-1})=\emptyset$, then
$\widehat{D}\cdot l>0$. 
\end{enumerate}
\end{remark}
\begin{proof}
By Rem.~\ref{SQM} (2) we have $l\cap\dom(\psi^{-1})=\emptyset$. Therefore if 
$l$ is not contained in the indeterminacy locus of $h^{-1}$, 
then its transform $\w{l}\subset \w{X}$ must be contained in the
indeterminacy locus of  $\ph^{-1}$. Then again by Rem.~\ref{SQM}, $\w{l}$
is an exceptional line, and $h^{-1}=\ph\circ\psi^{-1}$ 
is an isomorphism on $l$.

For the second statement, we can factor $h$ as
$\w{X}\stackrel{h_1}{\dasharrow} \w{X}_1 
\stackrel{h_2}{\dasharrow} \widehat{X}$, where $h_2$ is a $D_1$-negative flip
($D_1$ the transform of $D$ in $\w{X}_1$). By induction, we can assume that
the statement holds for $h_1$. 
Now if $l\cap\dom(h_2^{-1})=\emptyset$, we have $\widehat{D}\cdot l>0$, because
$h_2^{-1}$ is a $\widehat{D}$-positive flip. Otherwise $l$
is contained in the open subset where $h_2^{-1}$ is an isomorphism, so
that $l=h_2(l_1)$, $l_1$ an exceptional line in $\w{X}_1$. Moreover
$l_1\cap\dom(h_1^{-1})=\emptyset$, therefore by 
induction $D_1\cdot l_1>0$ and $\widehat{D}\cdot l>0$. 
\end{proof}
We carry on with the proof of Prop.~\ref{S}, and apply
Rem.~\ref{portovenere} to $h$ and $l\subset\widehat{X}$.
 Since $\Exc(k)\cdot
l=0$, we deduce that $h$ is an isomorphism over
$l$, so that $\w{l}=h^{-1}(l)\subset \w{X}$
is an exceptional line disjoint from $D$ and 
contracted by $g\circ \w{f}$. On the other hand $\dim\w{f}(\w{l})=1$ (because $\w{f}$ is $K$-negative), and $\w{f}(\w{l})\neq C$ (because $\w{l}\not\subset\w{f}^{-1}(C)=D$), thus $\dim (g\circ\w{f})(C)=1$, a contradiction.

\medskip

Hence $\w{f}_1\colon \w{X}_1\to S_1$ is a $K$-negative quasi-elementary contraction. Since $\w{X}_1$ is factorial, as in \cite[Lemmas 3.9 and 3.10]{fanos} one shows that $S_1$ is factorial. Thus
$S_1$ is a normal, Gorenstein surface with rational
singularities, that is, $S_1$ has at most Du Val singularities. 
Therefore either $g\colon S\to S_1$ is the blow-up of a smooth point
and $-K_S\cdot C=1$, or 
$C$ is a $(-2)$-curve in $S$ and $-K_S\cdot C=0$.

\medskip

Summing up, we have shown that  $-K_S\cdot \NE(g)\geq 0$
for every elementary contraction $g$ of $S$, 
 therefore $-K_S$ is nef.

Suppose now that $f$ is not elementary. To show that $-K_S$ is ample,
we need to show that when  $D_X$ is of type $(3,0)^Q$, $C$ cannot
be a $(-2)$-curve. For this, we show the existence of an irreducible
curve $C'\subset\w{X}$ with $D\cdot C'=-1$. This gives:
$$-1=(\w{f})^*(C)\cdot C'=C\cdot (\w{f})_*(C'),$$
hence $(\w{f})_*(C')=C$ and $C^2=-1$.

We know that $\w{f}_1$ is a non-elementary $K$-negative quasi-elementary contraction, so that
the general fiber is a smooth Del Pezzo
surface with Picard number $>1$. In particular, every fiber of
$\w{f}_1$ is covered by curves of anticanonical degree $2$, either
irreducible, or a union of two irreducible curves of anticanonical
degree $1$.

Let $F_0\subset\w{X}_1$ be the fiber containing the singular point $p$.
By Cor.~\ref{monaco}, $p$ cannot be contained in any irreducible
curve of anticanonical degree 
 $2$, hence we find an irreducible curve $C_1\subset F_0$ 
such that $p\in C_1$ and $-K_{\w{X}_1}\cdot
C_1=1$. Again by Cor.~\ref{monaco}, $C_1=k(l_1)$, where
$l_1\subset\widehat{X}$
is an
exceptional line with $\Exc(k)\cdot {l}_1=1$; clearly $l_1\not\subset\Exc(k)$.

Notice that $h$ cannot be an isomorphism over $l_1$, otherwise we
would get an exceptional line in $\w{X}$, not contained in $D$, but
contracted by $g\circ\w{f}$, which is impossible. Therefore by
Rem.~\ref{portovenere} we have $l_1\cap\dom(h^{-1})=\emptyset$.

Consider now the factorization $h=\psi\circ\ph^{-1}$ as in \eqref{diagram},
where $\ph$ and $\psi$ are SQMs.
By Rem.~\ref{SQM} (2), $l_1$ is contained in the indeterminacy locus of
$\psi^{-1}$; let $L\subset X$ be the corresponding exceptional
plane, and $C_L\subset L$ a line. Since $\Exc(k)\cdot l_1=1$ in
$\widehat{X}$, using Rem.~\ref{intersection} we see that  
$D_X\cdot C_L=-1$. Now we cannot
have $L\cap\dom(\ph)=\emptyset$ (otherwise $h$
would be an isomorphism over $l_1$), therefore $L$ intersects the
indeterminacy locus of $\ph$ 
in finitely many points and we can choose $C_L$ disjoint
from it. In the end $C':=\ph(C_L)\subset\w{X}$ is an irreducible
curve with $D\cdot C'=-1$, and  this concludes the proof.
\end{proof}
\subsection{Elementary rational contractions onto $3$-folds}\label{3folds}
In this section we study Fano $4$-folds having an elementary
rational contraction onto a $3$-dimensional variety. 
Also in this case, we first recall the result about the
regular case. 
\begin{thm*}[\cite{fanos}, Cor.~1.2 (iii)]
Let $X$ be a Fano $4$-fold 
and $f\colon X\to Y$ an elementary 
contraction with $\dim Y=3$. Then $\rho_X\leq 11$, with equality only
if $X\cong\pr^1\times\pr^1\times S$ or $X\cong\mathbb{F}_1\times
S$, where $S$ is a surface.
\end{thm*}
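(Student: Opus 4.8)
Since $\dim Y=3<4=\dim X$, the contraction $f$ is of fiber type, and being elementary it satisfies $\rho_X=\rho_Y+1$; its general fiber $F$ is a smooth Fano curve, hence $F\cong\pr^1$ with $-K_X\cdot F=2$, so $f$ is generically a conic bundle. As an extremal contraction of the Fano manifold $X$ it is $K$-negative, so by Th.~\ref{components} every fiber has dimension at most $2$ with the $2$-dimensional ones on the explicit list there, and by Th.~\ref{smallfibers} $Y$ is smooth (and $f$ an honest conic bundle) wherever all fibers are one-dimensional; in general $Y$ is a normal $3$-fold with only mild (rational, canonical) singularities. Moreover $f$ is quasi-elementary (Rem.~\ref{pippo}), so $Y$ is a Mori dream space by Cor.~\ref{target} and $\ker f_*=\N(F,X)=\R[F]$. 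The assertion $\rho_X\le 11$ is thus equivalent to $\rho_Y\le 10$, which is what I would bound.

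First I would dispose of $c_X\ge 3$. By Th.~\ref{cod}, either $X\cong S_1\times S_2$ is a product of Del Pezzo surfaces, in which case an elementary fiber-type contraction onto a $3$-fold must be the ruling of one factor times the other, and one reads off $\rho_X\le 11$ together with the two equality cases directly, or $\rho_X\le 6$ and there is nothing to prove. Hence I may assume $c_X\le 2$, so that $\dim\N(D,X)\ge\rho_X-2$ for every prime divisor $D\subset X$.

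The central idea is to produce a quasi-elementary rational contraction of the target. Since $X$ is Fano it is rationally connected, hence $Y$ is uniruled and $\Eff(Y)\subsetneq\Nu(Y)$, so $Y$ admits rational contractions of fiber type. By Cor.~\ref{tobia}, a quasi-elementary rational contraction $g\colon Y\dasharrow Z$ with $\rho_Z=\rho_Y-1$ exists exactly when a facet of $\Mov(Y)$ lies in a facet of $\Eff(Y)$; and when it exists such $Z$ is automatically a surface, since a curve target would force $\rho_Z=1$ and hence $\rho_Y=2$. Then $g\circ f\colon X\dasharrow Z$ is quasi-elementary (Rem.~\ref{composition}) onto a surface, and the surface bounds — Prop.~\ref{S} if $g\circ f$ is not a morphism, the regular case recalled above from \cite{fanos} if it is — give $\rho_Z\le 9$, whence $\rho_Y=\rho_Z+1\le 10$. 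The obstruction to finding $g$ is measured precisely by the non-movable prime divisors of $Y$ (the part of $\partial\Eff(Y)$ outside $\Mov(Y)$): each such $D\subset Y$ pulls back, via the factorization of $f$ through a smooth SQM, to a non-movable prime divisor of $X$ by Rem.~\ref{torino}, and these are classified by Th.~\ref{effective}. I would use this classification, together with $c_X\le 2$, to count the non-movable divisors of $Y$ and thereby force the required common facet to appear.

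The main obstacle I expect is exactly this reconciliation: controlling simultaneously the fiber-type (movable) and divisorial (non-movable) boundary structure of $\Eff(Y)$ so that the count of non-movable divisors coming from Th.~\ref{effective} is tight enough to yield $\rho_Y\le 10$ rather than a weaker bound, while also handling the isolated $2$-dimensional fibers of the conic bundle and checking that they contribute no extra independent classes to $\N(X)$ beyond those allowed by Th.~\ref{components}. Once $\rho_Y\le 10$ is in hand, the equality case is rigid: $\rho_Y=10$ forces the quasi-elementary surface contraction to be extremal with $\rho_Z=9$, pinning $Z$ to a Del Pezzo surface $S$ with $\rho_S=9$ and $Y\cong\pr^1\times S$, and then the Fano condition on the conic bundle $f\colon X\to Y$ leaves only the two $\pr^1$-bundles over $\pr^1$ that remain Fano after the product with $S$, namely $\pr^1\times\pr^1\times S$ and $\mathbb{F}_1\times S$.
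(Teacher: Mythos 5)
First, a point of orientation: this statement is quoted from \cite{fanos} (it appears as an unnumbered theorem with a citation), and the present paper never reproves it --- indeed, in the proof of Th.~\ref{dim3} the regular case is disposed of precisely by citing \cite[Cor.~1.2~(iii)]{fanos}. So your proposal must stand on its own, and it does not: its central step is missing. Everything hinges on producing an elementary rational contraction of fiber type $g\colon Y\dasharrow Z$ (equivalently, by Cor.~\ref{tobia}, a facet of $\Mov(Y)$ contained in a facet of $\Eff(Y)$). You never prove that such a $g$ exists; you only say you ``would use'' Th.~\ref{effective} and $c_X\le 2$ to count the non-movable divisors of $Y$ and thereby ``force the required common facet to appear'', and you yourself flag this as the main obstacle. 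Uniruledness of $Y$ gives only some movable non-big class, i.e.\ some rational contraction of fiber type, with no control on its corank, so it does not yield $g$. Moreover, the paper's treatment of the parallel rational statement (Th.~\ref{dim3}) shows that this is a genuine dichotomy, not one that can be counted away: when $Y$ has \emph{no} elementary rational contraction of fiber type, the bound $\rho_Y\le 10$ is obtained there by a completely different and much longer argument --- Lemmas~\ref{small} and \ref{divisorial} show that every elementary contraction of (any SQM of) $Y$ is either a small contraction of $(-1,-1)$-curves or the blow-up of a smooth point, hence $-K_Y$ is nef; one then shows $-K_Y$ is big, contracts the $r$ divisorial rays, invokes Prokhorov's bound $(-K)^3\le 72$ for almost Fano $3$-folds to get $r\le 8$, and finally proves that the resulting $3$-fold $Y_r$ has $\rho_{Y_r}\le 2$. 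Your proposal contains no substitute for this branch, and without it the case where the desired facet fails to exist is simply unaddressed.

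Two further remarks. The equality case is only gestured at: you assert that $\rho_Y=10$ forces $Z$ to be a Del Pezzo surface with $\rho_Z=9$, that $Y\cong\pr^1\times S$, and that $X$ is then one of the two listed products, but none of these implications is proved (why would $g$ be regular? why would $Y$, let alone $X$, split as a product?); in \cite{fanos} this rigidity comes out of the full equality analysis for quasi-elementary contractions onto surfaces, which is not reproduced or replaced here. On the positive side, your reduction of the case $c_X\ge 3$ via Th.~\ref{cod} (together with $c_X\le 8$) is correct, and your composition argument --- $g\circ f$ is quasi-elementary by Rem.~\ref{pippo} and Rem.~\ref{composition}, so Prop.~\ref{S}, or the regular result recalled from \cite{fanos}, gives $\rho_Z\le 9$ and hence $\rho_X\le 11$ --- is exactly the easy half of the paper's proof of Th.~\ref{dim3}; but it is conditional on the unproven existence of $g$.
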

Here we show the following.
\begin{thm}\label{dim3}
Let $X$ be a Fano $4$-fold 
and $f\colon X\dasharrow Y$ an elementary rational
contraction with $\dim Y=3$. Then $\rho_X\leq 11$.
\end{thm}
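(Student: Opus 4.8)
The plan is to reduce to the non-regular case and then bound the Picard number of the target $3$-fold by controlling all of its elementary contractions, in analogy with the surface case (Prop.~\ref{S}). If $f$ is regular we are precisely in the situation of \cite{fanos}, Cor.~1.2~(iii), which already gives $\rho_X\leq 11$; so I would assume that $f$ is not a morphism and argue by contradiction, supposing $\rho_X\geq 12$. By Rem.~\ref{ratsing} I factor $f$ as $X\dasharrow\w{X}\stackrel{\w{f}}{\to}Y$ with $\w{X}$ a smooth SQM of $X$ and $\w{f}$ a $K$-negative contraction. Since an elementary rational contraction of fiber type is quasi-elementary (Rem.~\ref{pippo}) and $\dim Y=3=\dim X-1$, the map $\w{f}$ is an elementary $K$-negative contraction whose general fiber is $\pr^1$, i.e.\ generically a conic bundle (with possible isolated $2$-dimensional fibers as in Th.~\ref{components}). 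By Cor.~\ref{target}, $Y$ is a $\Q$-factorial Mori dream space of dimension $3$ with at most isolated canonical and factorial singularities, and $\rho_Y=\rho_X-1\geq 11$; moreover $\w{X}$ contains exceptional lines and, since $X$ has a small elementary contraction, $c_X\leq 2$ by Cor.~\ref{eco}. The goal is reduced to showing $\rho_Y\leq 10$.

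The engine is the analysis of the elementary contractions $g\colon Y\to Y_1$ of $Y$, exploiting that by Rem.~\ref{targetMDS} and Rem.~\ref{composition} the composition $g\circ f\colon X\dasharrow Y_1$ is again a rational contraction of $X$, which is quasi-elementary whenever $g$ is. I would first dispose of the fiber-type contractions: if $g$ is of fiber type then it is quasi-elementary (Rem.~\ref{pippo}), so $g\circ f$ is a quasi-elementary rational contraction of $X$ onto $Y_1$ with $\dim Y_1\leq 2$; the case $\dim Y_1=1$ is excluded by Cor.~\ref{pluto}, and for $\dim Y_1=2$ I would combine Prop.~\ref{S} with the conic-bundle structure of $\w{f}$ to pin down the anticanonical degrees of the curves contracted by $g$, concluding $-K_Y\cdot\NE(g)\geq 0$ (with strict inequality away from product situations).

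The main work, and the principal obstacle, is the birational elementary contractions $g\colon Y\to Y_1$. Here I would pull $g$ back through the conic bundle $\w{f}$ and invoke the description of non-movable prime divisors: when $g$ is divisorial, its exceptional divisor $E$ is non-movable (Rem.~\ref{nonmovable}), so $(\w{f})^*(E)$ is a non-movable prime divisor in $\w{X}$ (Rem.~\ref{torino}), and its transform in $X$ falls under Th.~\ref{effective} and Cor.~\ref{monaco}. This tells me exactly which divisorial contractions of $Y$ can occur (blow-up of a smooth curve, of a point, or contraction of a quadric to a factorial terminal point) and forces the relevant exceptional lines of $\w{X}$ to have controlled anticanonical degree (Rem.~\ref{SQM}). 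Using the bookkeeping of exceptional lines under SQMs (Rem.~\ref{SQM} together with the flip comparison of Rem.~\ref{portovenere}) I would show that a $-K_Y$-negative extremal ray is incompatible with $-K_{\w{X}}$ being $\w{f}$-ample; small extremal rays of $Y$ are treated by passing to a SQM of $Y$, again a Mori dream space by Rem.~\ref{targetMDS}. The conclusion is that $-K_Y$ is nef on every extremal ray, hence nef.

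Finally, once $-K_Y$ is nef I would check that it is also big, so that $Y$ is a Gorenstein canonical weak Fano $3$-fold that is a Mori dream space, and invoke the bound $\rho\leq 10$ for such varieties, realized in the equality case by $Y\cong\pr^1\times S$ with $S$ a Del Pezzo surface of $\rho_S=9$ — exactly the target appearing in the regular equality cases of \cite{fanos}. This contradicts $\rho_Y\geq 11$ and yields $\rho_X\leq 11$. I expect the two hardest points to be: (i) excluding $-K_Y$-negative \emph{small} and \emph{divisorial-to-a-point} extremal rays of $Y$, where the singular point of $Y$ and the exceptional lines of $\w{X}$ interact and the interplay with $c_X\leq 2$ and Cor.~\ref{monaco} must be exploited carefully; and (ii) the passage from ``$-K_Y$ nef'' to the numerical bound $\rho_Y\leq 10$ in the possibly singular case.
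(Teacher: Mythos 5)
Your proposal follows the paper's road map faithfully up to the point where $-K_Y$ is shown to be nef, but the final step contains a genuine gap that the rest of the argument cannot absorb: there is no bound ``$\rho\leq 10$'' for $3$-folds with $-K$ nef and big, not even for smooth Mori dream spaces of this kind. The bound $\rho\leq 10$ is a theorem about \emph{Fano} $3$-folds ($-K$ ample); for weak (almost) Fano $3$-folds it fails badly. For instance, a projective crepant resolution of a Gorenstein canonical toric Fano $3$-fold is a smooth toric weak Fano $3$-fold whose Picard number is the number of boundary lattice points of the corresponding reflexive polytope minus $3$, which can be as large as $35$; nefness and bigness of $-K_Y$, even together with the Mori dream space property and isolated canonical factorial singularities, therefore never produce the contradiction with $\rho_Y\geq 11$ that your argument requires. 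What the paper does instead is quantitative in a different way: having shown (Lemmas~\ref{small} and \ref{divisorial}) that \emph{every} small extremal ray of $Y$ is $K$-trivial with exceptional locus a union of $(-1,-1)$-curves and \emph{every} divisorial extremal ray of $Y$ is the blow-up of a smooth point, it contracts all $r$ divisorial rays at once, $k\colon Y\to Y_r$; each point blow-down increases $(-K)^3$ by exactly $8$, so Prokhorov's bound $(-K_{Y_r})^3\leq 72$ for almost Fano $3$-folds with canonical Gorenstein singularities \cite{prok} gives $r\leq 8$. Then one shows that any $K$-positive extremal ray of $Y_r$ must be of fiber type over a base of dimension at most $1$ (birational rays and fibrations onto surfaces are excluded, the latter by constructing a SQM of $Y$ with an elementary contraction of fiber type, contradicting the standing assumption that $Y$ admits no elementary \emph{rational} contraction of fiber type), whence $\rho_{Y_r}\leq 2$ and $\rho_X=\rho_{Y_r}+r+1\leq 11$. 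So the numerical input is a degree bound plus the point-blow-up structure of divisorial rays, not a Picard number bound for weak Fanos.

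A secondary but substantive confusion: you write that Th.~\ref{effective} and Cor.~\ref{monaco} ``tell exactly which divisorial contractions of $Y$ can occur (blow-up of a smooth curve, of a point, or contraction of a quadric to a factorial terminal point)''. That list classifies elementary divisorial contractions of SQMs of the $4$-fold $X$, not of the $3$-fold $Y$. Passing from the non-movable divisor $(\w{f})^*(\Exc(g))\subset\w{X}$ to the structure of $g\colon Y\to Y_0$ is precisely the content of the paper's Lemma~\ref{divisorial} (with Lemma~\ref{small} for small rays), whose proof is long and delicate: one must first exclude type $(2,1)$, and the case where the $4$-fold divisor is of type $(3,0)^Q$ requires a multi-step analysis of the fiber through the singular point. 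This structure theorem is not merely what gives nefness of $-K_Y$: it is exactly what makes the endgame counting above possible (each divisorial ray drops $(-K)^3$ by precisely $8$), so it cannot be treated as a technical point to be filled in independently of the conclusion. Finally, your treatment of fiber-type rays can be streamlined: under your contradiction hypothesis $\rho_X\geq 12$, Prop.~\ref{S} (together with the regular case from \cite{fanos}) shows that $Y$ admits \emph{no} elementary rational contraction of fiber type at all, and this stronger statement (for SQMs of $Y$ as well, via Cor.~\ref{tobia}) is needed twice later, both for bigness of $-K_Y$ and for the exclusion of surface fibrations of $Y_r$.
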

Before proving the theorem, we need some preliminary lemmas.
\begin{lemma}\label{small}
Let $X$ be a Fano $4$-fold
and $X\dasharrow Y$ an elementary rational contraction 
with $\dim Y=3$.
Suppose that $g\colon Y\to Y_0$ is a small elementary contraction.

Then $\Exc(g)$ is the disjoint union of smooth rational curves, lying in
the smooth locus of $Y$, with normal bundle $\mathcal{O}_{\pr^1}(-1)^{\oplus
  2}$; in particular $K_Y\cdot\NE(g)=0$.
\end{lemma}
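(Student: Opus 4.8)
The plan is to pass to a smooth $4$-fold dominating $Y$ and to read the contraction $g$ off from an extremal ray there. First I would record the structure of $Y$: since $X\dasharrow Y$ is elementary it is quasi-elementary (Rem.~\ref{pippo}), so by Cor.~\ref{target} $Y$ is a Mori dream space with at most isolated canonical factorial (hence Gorenstein) singularities. By Rem.~\ref{ratsing} I factor the contraction as $X\dasharrow\w{X}\stackrel{\w{f}}{\to}Y$ with $\w{X}$ a smooth SQM of $X$ (Rem.~\ref{SQM}) and $\w{f}$ a $K$-negative contraction; as $\rho_{\w{X}}-\rho_Y=1$ it is elementary of fiber type and relative dimension $1$, hence a conic bundle over the smooth locus of $Y$, with $2$-dimensional fibers occurring only over the finitely many singular points of $Y$ (Th.~\ref{smallfibers}, Th.~\ref{components}).

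Next I would analyse $g$ through the composite $g\circ\w{f}\colon\w{X}\to Y_0$. This is a contraction with $\rho_{\w{X}}-\rho_{Y_0}=2$, so $\NE(g\circ\w{f})$ is a $2$-dimensional face $\tau$ of the polyhedral cone $\overline{\NE}(\w{X})$; it has exactly two extremal rays, one being $R:=\NE(\w{f})$, the other I call $R'$. Over $Y\smallsetminus\Exc(g)$ the map $g$ is an isomorphism, so there the only curves contracted by $g\circ\w{f}$ are the fibers of $\w{f}$ (classes in $R$); hence every curve with class in $R'$ lies over $\Exc(g)$, the contraction $\operatorname{cont}_{R'}\colon\w{X}\to Z$ is an isomorphism over the preimage of $Y\smallsetminus\Exc(g)$, and $R'$ is a \emph{small} birational ray with $\Lo(R')\subseteq\w{f}^{-1}(\Exc(g))$ of dimension at most $2$.

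The key point is to identify $R'$. If $R'$ were $K_{\w{X}}$-negative, Th.~\ref{flip} would make $\Lo(R')$ a disjoint union of exceptional planes $\cong\pr^2$ contracted to points; but $\w{f}$ maps each such plane into the \emph{curve} $\Exc(g)$, so it must contract a curve of the plane, whose class would then lie in both $R'$ and $R$ --- impossible since $R\cap R'=0$. Therefore $-K_{\w{X}}\cdot R'\le 0$, so by Rem.~\ref{SQM}(2) $R'$ is generated by exceptional lines; reading Th.~\ref{flip} from the line side, $\Lo(R')=\bigsqcup_j l_j$ is a disjoint union of exceptional lines with $N_{l_j/\w{X}}\cong\mathcal{O}_{\pr^1}(-1)^{\oplus 3}$. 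Consequently $\Exc(g)=\bigcup_j\w{f}(l_j)$, and I set $C_j:=\w{f}(l_j)$. Using that an exceptional line meets no curve of anticanonical degree $1$ and no exceptional plane (Rem.~\ref{SQM}(1),(3)), I would check that the $l_j$ avoid the $2$-dimensional fibers of $\w{f}$; then each $C_j$ lies in the smooth locus of $Y$, $\w{f}$ is a genuine conic bundle over $C_j$, $\w{f}|_{l_j}$ is an isomorphism onto the smooth rational curve $C_j$, and the $C_j$ are pairwise disjoint.

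Finally I would compute the normal bundle. With $F_j:=\w{f}^{-1}(C_j)$ the smooth ruled surface over $C_j$ and $l_j\subset F_j$ a section, the sequence $0\to N_{l_j/F_j}\to N_{l_j/\w{X}}\to\w{f}^*N_{C_j/Y}|_{l_j}\to 0$ realises $N_{C_j/Y}$ as a rank-$2$ quotient of $\mathcal{O}_{\pr^1}(-1)^{\oplus 3}$; hence $N_{C_j/Y}\cong\mathcal{O}(a)\oplus\mathcal{O}(b)$ with $a,b\ge -1$, and adjunction on $C_j\cong\pr^1$ gives $K_Y\cdot C_j=-2-(a+b)\le 0$. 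On the other hand $Y$ is smooth along $\Exc(g)$, so Mori's classification of extremal contractions of smooth $3$-folds applies near $\Exc(g)$ and forbids a small $K$-negative contraction; thus $K_Y\cdot\NE(g)\ge 0$, whence $K_Y\cdot C_j=0$. This forces $a+b=-2$ and so $a=b=-1$, that is $N_{C_j/Y}\cong\mathcal{O}_{\pr^1}(-1)^{\oplus 2}$ and $K_Y\cdot\NE(g)=0$, as claimed. I expect the main obstacle to be the two geometric inputs on $\w{X}$: ruling out a $K$-negative ray $R'$ (so as to realise the contracted curves as images of exceptional lines) and, above all, verifying that $\Exc(g)$ avoids the singular points of $Y$ with $\w{f}|_{l_j}$ an isomorphism onto $C_j$; once these are in place, the normal-bundle sequence together with Mori's smooth $3$-fold classification close the argument routinely.
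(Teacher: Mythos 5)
Your opening and closing moves are sound, and in fact both differ genuinely from the paper: you identify the second ray $R'$ of $\NE(g\circ\w{f})$ by a global argument (Kawamata's theorem plus the fact that $\pr^2$ cannot dominate a curve), where the paper argues locally over each point of $g(\Exc(g))$ using the relative cone and Th.~\ref{smallfibers}; and you pin down the normal bundle by combining the conormal sequence with the non-existence of small $K$-negative contractions of a $3$-fold that is smooth along the contracted curves, where the paper instead reads off $\mathcal{N}_{l_i/S_i}\cong\mathcal{O}_{\pr^1}(-1)$ from an explicit $\mathbb{F}_1$-structure. Both of these steps, taken by themselves, are correct.

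The gap is the sentence ``Consequently $\Exc(g)=\bigcup_j\w{f}(l_j)$.'' What your ray analysis actually gives is only the inclusion $\bigcup_j\w{f}(l_j)\subseteq\Exc(g)$. Nothing you have said rules out a fiber of $g$ of the form $\w{f}(l_1)\cup C$ with $C$ an extra irreducible component (nor a fiber containing no image of an exceptional line at all): an irreducible curve $C'\subset\w{f}^{-1}(C)$ dominating such a $C$ has class $a[F]+b[l_1]$ with $a,b>0$ ($F$ a general fiber of $\w{f}$), which lies in the relative interior of the face $\NE(g\circ\w{f})$ and is invisible to an argument that only looks at the two extremal rays. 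Since the lemma is a statement about \emph{all} of $\Exc(g)$, and since everything downstream in your proof (smoothness of $Y$ along $\Exc(g)$, the conic-bundle structure over $C_j$, the surface $F_j$ and the section $l_j$, even the applicability of the Mori-type argument) rests on this identification, this is where the real work of the lemma sits. The paper fills exactly this hole by flipping the $K$-positive ray(s) to reach a $K$-negative contraction $\ph\colon\widehat{X}\to Y_0$ and invoking the classification of isolated $2$-dimensional fibers in \cite[Prop.~4.3.1]{AWaview}: this yields $\ph^{-1}(p_i)=L_i\cup\widehat{S}_i$ with $\widehat{S}_i$ irreducible (isomorphic to $\pr^2$ or $\mathbb{F}_1$), whence $S_i=(g\circ\w{f})^{-1}(p_i)$ and therefore $g^{-1}(p_i)$ are irreducible, and the rest of the structure follows. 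Relatedly, your remark that Rem.~\ref{SQM}(1),(3) lets you ``check that the $l_j$ avoid the $2$-dimensional fibers of $\w{f}$'' is not routine: by Th.~\ref{components} such a fiber may have a component $(\pr^2,\mathcal{O}_{\pr^2}(2))$, which is covered by curves of anticanonical degree $2$, and Rem.~\ref{SQM}(1) permits each of these to meet one point of an exceptional line, so degree counting alone gives no contradiction. In the paper this issue never arises, because the absence of $2$-dimensional fibers of $\w{f}$ over $C_i$ is deduced from the irreducibility of $S_i=\w{f}^{-1}(C_i)$, i.e.\ again from the flip-and-classify step that your proposal omits.
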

\begin{proof}
By Rem.~\ref{ratsing}, we can
factor the map $X\dasharrow Y$ as
$X\dasharrow\w{X}\stackrel{f}{\to}Y$, where $\w{X}$ is a SQM of $X$
and $f$ is a $K$-negative
elementary contraction.

By standard properties of $K$-negative elementary
contractions, $f$ is equidimensional except possibly at finitely many
points of $Y$, where $f$ can have isolated $2$-dimensional
fibers. Moreover $Y$ can have at most canonical and factorial
singularities at these points, and is smooth elsewhere (see
Th.~\ref{smallfibers} and Cor.~\ref{target}).  

We have $\dim\Exc(g)=1$ and $g(\Exc(g))=\{p_1,\dotsc,p_r\}$ 
is a finite set of points.
Fix $i\in\{1,\dotsc,r\}$; we show that there exists an exceptional
line $l_i$ contained in  $(g\circ f)^{-1}(p_i)$.

Suppose that this is not the case: then there is an open subset
$U$ of $Y_0$, containing $p_i$, such that
$\widetilde{U}:=(g\circ f)^{-1}(U)$  does not contain exceptional
lines. In particular $(g\circ f)_{|\w{U}}\colon \w{U}\to
U$ is a local contraction and  $-K_{\w{U}}$ is $(g\circ
f)$-ample. Moreover $(g\circ f)_{|\w{U}}$ factors as
$g_{|U_Y}\circ f_{|\w{U}}$, where
$U_Y:=g^{-1}(U)$, so that $\dim\N(\w{U}/U)=2$ (we refer the reader
to \cite{KMM} for the notation in the relative setting).

Let $\tau$ be the extremal ray of
$\NE(\w{U}/U)$ different from $\NE(f_{|\w{U}})$.
We have
$f(\Lo(\tau))\subseteq\Exc(g)$, so that $\dim\Lo(\tau)\leq 2$, and
$\tau$ is a small extremal ray. On the other hand
 $f$ is finite  on the fibers
of the contraction of $\tau$, which then have dimension at most $1$.
Anyway this is impossible by Th.~\ref{smallfibers}, because 
 $-K_{\w{U}}\cdot \tau>0$.

Therefore we have an exceptional line $l_i\subset (g\circ f)^{-1}(p_i)$,
and $g\circ f$ is not $K$-negative. 

\medskip

By flipping the $K$-positive extremal rays contracted by
$g\circ f$ as in the proof of Rem~\ref{ratsing}, 
we get a diagram:
$$\xymatrix{
{\w{X}}\ar[d]_{f}\ar@{-->}[r]^h
&{\widehat{X}}\ar[d]^{\ph}\\   
 {Y}\ar[r]^{g}&{Y_0}
}$$
where $h$ is a composition of $K$-positive flips, and
 $\ph$ is a $K$-negative contraction.
In particular, as in Rem.~\ref{SQM} we see that
$\widehat{X}\smallsetminus\dom(h^{-1})$ is a disjoint union of
exceptional planes, and $\w{X}\smallsetminus\dom(h)$ a disjoint union
of exceptional lines.  

Since $f$ cannot contract any exceptional line,
$h$ is an isomorphism on $(g\circ f)^{-1}(Y_0\smallsetminus
\{p_1,\dotsc,p_r\})$, so that $\ph$ is equidimensional outside a finite subset of
$Y_0$. 

 Fix $i\in\{1,\dotsc,r\}$, set $S_i:=(g\circ f)^{-1}(p_i)$, and let
 $\widehat{S}_i\subset\widehat{X}$ be its transform, so that
 $\widehat{S}_i\subseteq \ph^{-1}(p_i)$. The fiber $\ph^{-1}(p_i)$
 cannot have dimension $3$, because $h$ is an
isomorphism in codimension $1$ and $g\circ f$ has fibers of dimension
at most $2$.
Since $S_i$ has
dimension $2$, $\ph^{-1}(p_i)$ is
an isolated $2$-dimensional fiber of $\ph$. 

On the other hand
 $S_i$ contains the
exceptional line $l_i$, which lies in the indeterminacy locus of $h$.
We conclude that there is an exceptional plane $L_i$, lying in the
indeterminacy locus of $h^{-1}$, and contained in $\ph^{-1}(p_i)$, so that
$\ph^{-1}(p_i)\supseteq L_i\cup \widehat{S}_i$.

We use the classification of possible isolated $2$-dimensional fibers of
$\ph$ given in \cite[Prop.~4.3.1]{AWaview} (notice that
we can apply this result
to $\ph$ using \cite[Th.~2.6]{mella}, as in the proof of
Th.~\ref{components}). 
In particular, if $T$ is an
irreducible component of $\ph^{-1}(p_i)$ which intersects $L_i$ in a
curve, we see that
$T$ is either $\pr^2$, $\pr^1\times\pr^1$, the Hirzebruch surface
$\mathbb{F}_1$, or the quadric cone. On the other hand $T\cap L_i$
must be a negative curve in $T$, therefore the only possibility is
$T\cong\mathbb{F}_1$.  

We conclude from \cite[Prop.~4.3.1]{AWaview} that $\ph^{-1}(p_i)=L_i\cup
\widehat{S}_i$, and either $\widehat{S}_i\cong\pr^2$ intersects $L_i$
in one point, or 
 $\widehat{S}_i\cong\mathbb{F}_1$ intersects  $L_i$ in a curve which
is a line in  
$L_i$, and the $(-1)$-curve in  $\widehat{S}_i$.

In particular $\widehat{S}_i$ is irreducible, therefore $S_i$ is
irreducible and $C_i:=g^{-1}(p_i)$ is an irreducible curve, because
$C_i=f(S_i)$. Moreover $f$ cannot have $2$-dimensional fibers over
$C_i$, because $S_i=f^{-1}(C_i)$, so that $f$ is a conic bundle over
$C_i$ and $C_i\subset Y_{reg}$ 
(see Th.~\ref{smallfibers}). On the other hand $f(l_i)=C_i$ and
$l_i$ cannot intersect curves of anticanonical degree $1$ by
Rem.~\ref{SQM} (1), therefore $f$ is smooth over $C_i$. 

The birational map $h^{-1}$ gives an isomorphism $S_i\smallsetminus
l_i\cong \widehat{S}_i\smallsetminus L_i\cong
\pr^2\smallsetminus\{pt\}$, and under this isomorphism $f_{|S_i\smallsetminus
l_i}$ is the projection. We conclude that  $C_i\cong\pr^1$,
 $S_i\cong\mathbb{F}_1$, and $l_i$ is the
$(-1)$-curve in $\mathbb{F}_1$.

We have $\mathcal{N}_{l_i/S_i}\cong\mathcal{O}_{\pr^1}(-1)$ and 
$\mathcal{N}_{l_i/\w{X}}\cong\mathcal{O}_{\pr^1}(-1)^{\oplus 3}$, which
imply that
$(\mathcal{N}_{S_i/\w{X}})_{|l_i}\cong\mathcal{O}_{\pr^1}(-1)^{\oplus
  2}$. On the other hand
$\mathcal{N}_{S_i/\w{X}}\cong(f_{|S_i})^*\mathcal{N}_{C_i/Y}$, therefore
$$\mathcal{N}_{C_i/Y}\cong
(\mathcal{N}_{S_i/\w{X}})_{|l_i}\cong\mathcal{O}_{\pr^1}
(-1)^{\oplus  2},$$
and this concludes the proof.
\end{proof}
\begin{lemma}\label{divisorial}
Let $X$ be a Fano $4$-fold with $\rho_X\geq 6$ and
$X\dasharrow Y$ an  elementary rational contraction, which is not
regular, 
with $\dim Y=3$.

Suppose that $g\colon Y\to Y_0$ is a divisorial
 elementary contraction.
Then $g$ is the blow-up of a smooth point of $Y_0$; in particular
$-K_Y\cdot\NE(g)>0$.  
\end{lemma}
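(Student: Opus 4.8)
The plan is to mirror the strategy of Lemma~\ref{small} and Prop.~\ref{S}, lifting the $3$-fold contraction $g$ to the smooth $4$-fold sitting over $Y$ and using the classification of non-movable prime divisors (Th.~\ref{effective}). First I would factor the rational contraction as $X\dashrightarrow\w{X}\stackrel{f}{\to}Y$ with $\w{X}$ a smooth SQM of $X$ and $f$ a $K$-negative elementary contraction (Rem.~\ref{ratsing}); since $\dim Y=3$, $f$ is of fiber type, i.e.\ a conic bundle where it is equidimensional. As $X\dashrightarrow Y$ is not regular, $\w{X}$ contains exceptional lines and $X$ has a small elementary contraction, so $c_X\le 2$ by Cor.~\ref{eco}. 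Set $E:=\Exc(g)$, a non-movable prime divisor in $Y$ (Rem.~\ref{nonmovable}); since $f$ is elementary, hence quasi-elementary (Rem.~\ref{pippo}), Rem.~\ref{torino} shows $D:=f^*(E)$ is a non-movable prime divisor in $\w{X}$, and its transform $D_X\subset X$ is non-movable with $\dim\N(D_X,X)\ge\rho_X-c_X\ge 4$.

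Next I would carry out a dimension count. Because $f|_D\colon D\to E$ is surjective with general fiber the conic class spanning $\ker f_*$, one gets $\dim\N(D,\w{X})=\dim\N(E,Y)+1$, and likewise $\dim\N(E,Y)=\dim\N(g(E),Y_0)+1\le 2$; hence $\dim\N(D,\w{X})\le 3<\dim\N(D_X,X)$. By Rem.~\ref{divisors} and Cor.~\ref{dimension} this forces a $D$-negative flip in the SQM $X\dashrightarrow\w{X}$, so $D_X$ meets an exceptional plane. By Th.~\ref{effective}$(i)$ this excludes type $(3,2)$, so $D_X$ is of type $(3,1)$, $(3,0)^{\pr^3}$, or $(3,0)^Q$.

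Then I would show that $g$ contracts $E$ to a point $p$ and apply Rem.~\ref{factor} to the contraction $G:=g\circ f\colon\w{X}\to Y_0$ and the divisor $D$ (with $G(D)=\{p\}$), producing a diagram
$$\xymatrix{
\w{X}\ar[d]_{G}\ar@{-->}[r]&\widehat{X}\ar[d]^{k}\\
Y_0&Y'\ar[l]_{h}}$$
where $\w{X}\dashrightarrow\widehat{X}$ is a sequence of $D$-negative flips, $k$ is an elementary divisorial contraction with $\Exc(k)$ the transform of $D$, and $h$ is a contraction. A Picard-number count gives $\rho_{Y'}-\rho_{Y_0}=1$, so $h$ is an elementary conic bundle of the $4$-fold $Y'$ over the $3$-fold $Y_0$, and exactly as in Prop.~\ref{S} one checks that $h$ is quasi-elementary and $Y'$ is factorial. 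Cor.~\ref{monaco} then identifies $k$ according to the type of $D_X$: a smooth-curve blow-up, a smooth-point blow-up (both with $Y'$ a smooth SQM of a Fano $4$-fold in types $(3,1)$ and $(3,0)^{\pr^3}$), or the contraction of a quadric to a point (type $(3,0)^Q$). In the smooth cases $X_1\dashrightarrow Y_0$ is a quasi-elementary rational contraction onto a $3$-fold, so $Y_0$ has at most isolated canonical factorial singularities (Cor.~\ref{target}), and analysing the fiber of $h$ over $p$ via Th.~\ref{smallfibers} and Th.~\ref{components} should force $g$ to be the ordinary blow-up of a smooth point; in the quadric case I would use Rem.~\ref{portovenere} and the anticanonical-degree bookkeeping of Rem.~\ref{SQM} to prove $h$ is $K$-negative, which rules out the quadric and again pins down the smooth point, giving $-K_Y\cdot\NE(g)>0$.

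The hard part is twofold. First, I must rule out that $g$ contracts $E$ to a curve, since Rem.~\ref{factor} requires $G(D)$ to be a single point; the dimension count alone does not exclude this, so the exceptional-line argument (as in the second-ray analysis of Lemma~\ref{small}, via Th.~\ref{smallfibers}) must be invoked to collapse $g(E)$ to a point. Second, and most delicate, is translating the $4$-fold data $(k,h)$ back into the \emph{precise} $3$-fold statement that $g$ is the ordinary blow-up of a smooth point — in particular excluding the quadric case $(3,0)^Q$, where the estimates for exceptional lines of Rem.~\ref{SQM} and Rem.~\ref{portovenere} have to be pushed through exactly as in the surface case of Prop.~\ref{S}.
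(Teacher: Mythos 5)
Your overall skeleton coincides with the paper's: factor through a $K$-negative elementary contraction $f\colon\w{X}\to Y$, get $c_X\le 2$ from Cor.~\ref{eco}, use the dimension count $\dim\N(D,\w{X})\le 3<\rho_X-c_X\le\dim\N(D_X,X)$ to exclude type $(3,2)$, then apply Rem.~\ref{factor} and Cor.~\ref{monaco}. But the two points you yourself flag as ``the hard part'' are genuine gaps, and the fixes you propose do not work. First, excluding that $g$ contracts $\Exc(g)$ to a curve: the second-ray analysis of Lemma~\ref{small} gives no contradiction here. In Lemma~\ref{small} the second ray $\tau$ of $\NE(\w{U}/U)$ is forced to be small, $-K$-positive, with fibers of dimension at most $1$, which Th.~\ref{smallfibers} forbids. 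In the divisorial case the same analysis only yields that $\tau$ is of type $(3,2)$ with $\Lo(\tau)=D\cap\w{U}$ --- a perfectly consistent local configuration, not a contradiction. The paper's Step~1 must then run a relative Mori program for $D$ over $Y_0$, check that its flips are isomorphisms over $U$ (so the local $(3,2)$ structure persists), conclude that the program contracts the transform of $D$ by a divisorial contraction of type $(3,2)$, and only then contradict, via Th.~\ref{effective} and Cor.~\ref{monaco}, the fact that $D_X$ is not of type $(3,2)$. This global propagation step is entirely absent from your proposal, and without it the case $\dim g(\Exc(g))=1$ remains open.

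Second, your plan for the type $(3,0)^Q$ case rests on a misconception. Proving that $h\colon Y'\to Y_0$ (the paper's $f_1$) is $K$-negative is vacuous: $f_1$ is an \emph{elementary} contraction of fiber type whose general fiber is a conic, so every contracted curve is numerically proportional to a conic and $K$-negativity is automatic; it says nothing about the quadric. More importantly, the quadric case cannot be ``ruled out'': the paper's Steps 3--8 show that it may occur and that even then $g$ is the blow-up of a smooth point. That is the longest part of the proof and has no counterpart in your sketch: one proves that $f_1\circ k$ is \emph{not} $K$-negative (the opposite of what your plan expects), that the SQM $\w{X}\dasharrow\widehat{X}$ is a single $D$-negative flip, that $f_1^{-1}(p)$ is a reducible one-dimensional fiber made of two curves of anticanonical degree $1$, and finally --- identifying $D$ with the blow-up of the quadric $\widehat{D}$ at two points and computing in $\N(D)$, where $\ker i_*=\R([C_{L_1}]-[C_{L_2}])$ by \eqref{venerdi} --- that $f$ maps an exceptional plane $L_1\subset D$ isomorphically onto $\Exc(g)$, giving $\Exc(g)\cong\pr^2$ and $\Exc(g)\cdot f(C_{L_1})=-1$. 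Note also that your analogy with Prop.~\ref{S} points the wrong way: there the quadric case was likewise not excluded, and it only produced $-K_S\cdot C\ge 0$ (the $(-2)$-curve possibility); the corresponding weak conclusion $-K_Y\cdot\NE(g)\ge 0$ here would not suffice, since the strict inequality of the Lemma is exactly what the proof of Th.~\ref{dim3} relies on.
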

\begin{proof}
As usual, using Rem.~\ref{ratsing}, we 
factor the map $X\dasharrow Y$ as
$X\dasharrow\w{X}\stackrel{f}{\to}Y$, where $\w{X}$ is a SQM of $X$
and $f$ is a $K$-negative
elementary contraction. Moreover the map $X\dasharrow\w{X}$ is not an
isomorphism.
Since $X$ has a small elementary contraction and $\rho_X\geq 6$, we have
$c_X\leq 2$ by Cor.~\ref{eco}.

By Rem.~\ref{torino},
the divisor
$D:=f^{-1}(\Exc(g))$ is a non-movable prime divisor in $\w{X}$.
Moreover $\N(D,\w{X})=(g_*)^{-1}(\N(\Exc(g),Y))$, so that
\stepcounter{thm}
\begin{equation}\label{venerdi}
\dim\N(D,\w{X})=1+\dim\N(\Exc(g),Y)\leq 3.\end{equation} 
Let $D_X\subset X$ be the transform of $D$; then
$\dim\N(D_X,X)\geq\rho_X-2\geq 4>\dim\N(D,\w{X})$.   
As in the proof of Prop.~\ref{S}, this shows that
$D_X$ cannot be of type $(3,2)$. 

\medskip

\noindent \emph{Step 1: we show that $g$ is of type $(2,0)$.}

\smallskip

\noindent By contradiction, suppose that
$g$ is of type $(2,1)$; we show that then $D_X$ must be
 of type $(3,2)$, which we have already excluded.

Consider the (possibly empty) 
set of exceptional lines
$l_1,\dotsc,l_r\subset\w{X}$ such that $(g\circ f)(l_i)=\{pt\}$. 
 Set $U:=Y_0\smallsetminus 
(g\circ f)(l_1\cup\cdots\cup l_r)$, $U_Y:=g^{-1}(U)$, and
  $\w{U}:=f^{-1}(U_Y)$. Since $Y_0\smallsetminus U$ is a finite set
 and $g$ is of type $(2,1)$, 
we have $\Exc(g)\cap U_Y\neq\emptyset$, and $g_{|U_Y}\colon U_Y\to U$ is a
 non-trivial local contraction. 

 Consider now the local contraction
$(g\circ f)_{\w{U}}\colon \w{U}\to U$.
As in the proof of Lemma~\ref{small}, we see that there is a
birational extremal ray 
$\tau$ of  $\NE(\w{U}/U)$ such that  $-K_{\w{U}}\cdot\tau>0$,
$\tau\neq\NE(f_{|\w{U}})$, and the associated
 contraction has fibers of dimension at most $1$. Then $\tau$ is
 of type $(3,2)$ by Th.~\ref{smallfibers},
in particular $\Lo(\tau)$ is a prime divisor in
 $\w{U}$. On the other hand $f(\Lo(\tau))\subseteq\Exc(g)$,
 therefore $\Lo(\tau)= D\cap\w{U}$. 
 
We run a Mori
program on $\w{X}$ 
for $D$ over $Y_0$. This means that we obtain a commutative diagram:
$$\xymatrix{
{\w{X}=X_0}\ar@{-->}[r]^{f_0}\ar[rrd]_{g\circ f=\ph_0}
&{X_1}\ar@{-->}[r]\ar[rd]^{\ph_1} & 
{\cdots\cdots}\ar@{-->}[r]
&{X_{k-1}}\ar@{-->}[r]^{f_{k-1}}\ar[dl]_{\ph_{k-1}}
&{X_k}\ar[dll]^{\ph_k}\\
&&{Y_0}&&
}$$
 satisfying (\ref{mds}.\theMMPuno) and (\ref{mds}.\theMMPdue),
where moreover for every 
$i=0,\dotsc,k$ there is a contraction $\ph_i\colon X_i\to Y_0$
(with $\ph_0=g\circ f$) such that  $\sigma_i\subseteq\NE(\ph_i)$ for $i<k$. 
Instead of (\ref{mds}.\theMMPtre), in the end we get that either $D_k$
is $\ph_k$-nef, or 
there exists  a $D_k$-negative extremal ray of fiber type 
$\sigma_k\subseteq\NE(\ph_k)$.

In our situation, $D_k$ is effective, therefore in $X_k$
there cannot be a $D_k$-negative extremal ray of fiber
type, and $D_k$ is $\ph_k$-nef.

Let $i\in\{0,\dotsc,k\}$ be such that $f_j$
is a flip  for every $j\in\{0,\dotsc,i-1\}$, and either $i<k$ and $f_i$ is
divisorial, or
$i=k$; in particular  $\w{X}\dasharrow X_i$ is a SQM.
Then  $f_j$
is an isomorphism on $\ph_{j}^{-1}(U)$ for every $j\in\{0,\dotsc,i-1\}$.
Indeed suppose that $i>0$; then $\sigma_0\subset\NE(g\circ f)$ is a
small extremal ray, and $\NE(\w{U}/U)=\tau+\NE(f_{|\w{U}})$, hence
$\Lo(\tau)\cap\w{U}=\emptyset$.  
Iterating this reasoning, in the end we see that
$U_i:=\ph_i^{-1}(U)$ is isomorphic to $\w{U}$, and $D_i\cap U_i$
is the locus of an extremal ray of type $(3,2)$ in $\NE(U_i/U)$.

In particular $D_i$ is not $\ph_i$-nef, so that $i<k$, and
$f_i\colon X_i\to X_{i+1}$  is an
elementary 
divisorial contraction with exceptional divisor $D_i$. We deduce that
that $f_i$ is of type $(3,2)$, and hence that $D_X$ is of type
$(3,2)$, a contradiction.
This concludes the proof of Step~1.

\medskip

Therefore $g$ is of type $(2,0)$; in particular $p:=(g\circ
f)(D)=g(\Exc(g))\in Y_0$
is a point.
We apply Rem.~\ref{factor} to $g\circ f\colon \w{X}\to Y_0$ and $D$, and
 get a commutative diagram:
$$\xymatrix{{\w{X}}\ar@{-->}[r]^h\ar[d]_{f}&{\widehat{X}}\ar[r]^{k}&
{\w{X}_1}\ar[dl]^{f_1}\\
{Y}\ar[r]^{g}&{Y_0}&}$$
where $h$ is a SQM which factors as a
sequence
of $D$-negative flips,
and  $k$ is an elementary divisorial contraction with exceptional
divisor $\widehat{D}$, the transform of $D$. 

Notice that $f_1$ is an elementary $K$-negative contraction of type
$(4,3)$, and that $\w{X}\smallsetminus
D\cong\widehat{X}\smallsetminus(f_1\circ k)^{-1}(p)$.

\medskip

\noindent \emph{Step 2: when $D_X$ is of type $(3,1)$ or
  $(3,0)^{\pr^3}$, then $\w{X}_1$ is smooth and
$\dim f_1^{-1}(p)=1$, so that $Y_0$ is  smooth at $p$.}

\smallskip

\noindent Suppose that $D_X$ is of type $(3,1)$. Then by Cor.~\ref{monaco}
$\w{X}_1$   is smooth
 and $k$ is the blow-up of a
smooth curve $C\subset \w{X}_1$. Moreover $C$ cannot intersect
irreducible
curves
of anticanonical degree $2$, and can intersect only finitely many
irreducible
curves of anticanonical degree $1$. Since the image of $\widehat{D}$
in $Y_0$ is a point, $C$ is contained in a fiber of $f_1$.

Thus $f_1\colon \w{X}_1\to Y_0$ is an elementary $K$-negative contraction
of a smooth $4$-fold, of type $(4,3)$. We know that $f_1$ can have
isolated $2$-dimensional fibers, and that $Y_0$ is smooth outside
their images (see Th.~\ref{smallfibers}). 
Moreover the possible $2$-dimensional fibers have been
classified by Kachi \cite{kachi} and Andreatta and Wi{\'s}niewski
\cite[Prop.~4.3.1]{AWaview}. 
It is not difficult to see that
if $C$ were contained in a $2$-dimensional fiber, 
in any case $C$ should intersect curves of
anticanonical degree $2$, or infinitely many curves of anticanonical
degree $1$, which is impossible.

Hence $C$ is contained in a $1$-dimensional fiber of $f_1$,
 $Y_0$ is smooth
in $p=f_1(C)$, and $g$ is just the blow-up of $p$.

\smallskip

Suppose that $D_X$ is of type $(3,0)^{\pr^3}$. Again by
Cor.~\ref{monaco},
$\w{X}_1$ is smooth and
 $k$ is the blow-up of a point
$q\in \w{X}_1$. Moreover $q$ cannot belong to irreducible
curves of anticanonical
degree $1$, and can belong at most to finitely many irreducible
curves of
anticanonical degree $2$. Similarly to the previous case, using
Th.~\ref{components} on isolated $2$-dimensional fibers of $f_1$,
 we see that $q$
belongs to a $1$-dimensional fiber of $f_1$,
so that
$p=f_1(q)$ is a smooth point of $Y_0$ and $g$ is just a blow-up.

\medskip

\noindent \emph{Step 3: the case where $D_X$ is of type $(3,0)^Q$.}

\smallskip

\noindent For the rest of the proof, we assume 
that $D_X$ is of type $(3,0)^Q$. By
Cor.~\ref{monaco} we know that $\widehat{D}$ is isomorphic to an
irreducible quadric, and 
$q:=k(\widehat{D})\in\w{X}_1$
 is an isolated terminal and factorial singularity.
Moreover we have the following properties.
\begin{enumerate}[(P1)]
\item\label{P1} The point
$q$ cannot belong to irreducible curves of anticanonical degree $2$, and
 can belong at
most to finitely many irreducible
curves of anticanonical degree $1$. 
\item\label{P2}  Let
$C\subset\w{X}_1$ 
be an irreducible curve such that $q\in C$ and $-K_{\w{X}_1}\cdot
C=1$. Then the transform $\widehat{C}\subset\widehat{X}$ is an
exceptional line, and $\widehat{D}\cdot \widehat{C}=1$. 
\item\label{P3} Let $C_1,C_2\subset\w{X}_1$ be distinct
  irreducible curves such that $-K_{\w{X}_1}\cdot
C_1=1$ and the transform 
$\widehat{C}_2\subset\widehat{X}$ of $C_2$ is an
exceptional line.
 Then either $C_1\cap C_2=\emptyset$, or
$C_1\cap C_2=\{q\}$.
\end{enumerate}
Indeed (P\ref{P1}) and (P\ref{P2}) follow directly from
Cor.~\ref{monaco}. For (P\ref{P3}), let
$\widehat{C}_1\subset\widehat{X}$ be the transform of $C_1$. If
$q\not\in C_1$, then $-K_{\widehat{X}}\cdot \widehat{C}_1=1$, so that
$\widehat{C}_1\cap\widehat{C}_2=\emptyset$ by Rem.~\ref{SQM} (1), and 
 $C_1\cap C_2=\emptyset$. If $q\in C_1$, then $\widehat{C}_1$ is an
exceptional line by (P\ref{P2}), therefore
$\widehat{C}_1\cap\widehat{C}_2=\emptyset$ again by Rem.~\ref{SQM},
and $C_1\cap C_2=\{q\}$.

\medskip

\noindent \emph{Step 4: let $T$ be an irreducible component of
  $f_1^{-1}(p)_{red}$ containing $q$. If $\dim T=1$, then $-K_{\w{X}_1}\cdot
  T=1$. If $\dim T=2$, then $T\cong\mathbb{F}_r$ for some $r\geq 0$,
  and the fibers of 
  the $\pr^1$-bundle on $T$ have 
anticanonical degree $1$ in $\w{X}_1$.}

\smallskip

\noindent Since $f_1\colon\w{X}_1\to Y_0$ is an elementary contraction
of type $(4,3)$, it has fibers of dimension at most $2$, and can have
at most isolated $2$-dimensional fibers. Moreover by Th.~\ref{smallfibers}
the general fiber of $f_1$ is a smooth rational curve of
anticanonical degree $2$.
 
By degeneration (for instance using the Hilbert scheme), we find a
connected curve $C\subset \w{X}_1$ containing $q$ and numerically
equivalent to a general fiber of $f_1$, so that $C\subseteq
f_1^{-1}(p)$. Let $C_0$ be an irreducible component of $C$ containing
$q$. We have 
 $-K_{\w{X}_1}\cdot C_0\leq-K_{\w{X}_1}\cdot C= 2$,
$-K_{\w{X}_1}\cdot C_0>0$ because $f_1$ is elementary, and
$-K_{\w{X}_1}\cdot C_0\in\Z$  because $\w{X}_1$ is factorial.  
Using (P\ref{P1}) we conclude that
$-K_{\w{X}_1}\cdot C_0=1$. Thus if $\dim T=1$, we have
$T=C_0$ and we are done.

If $\dim T=2$, the possibilities for $(T,(-K_{\w{X}_1})_{|T})$ are
given by Th.~\ref{components} $(i)$, $(ii)$, or $(iii)$. However 
$(i)$ is
excluded by (P\ref{P1}). In case $(ii)$, 
again
by (P\ref{P1}) $q$ cannot be the vertex of the cone, 
and $q$ cannot be another point of the cone by (P\ref{P2}) and (P\ref{P3})
(just take 
 the line through $q$ and another line). Thus we are left with
 $(iii)$, which gives Step~4.

\medskip

\noindent \emph{Step 5: the contraction
$f_1\circ k\colon\widehat{X}\to Y_0$ is not
  $K$-negative. If $l_1,\dotsc,l_s\subset\widehat{X}$ are the
  exceptional lines contracted by $f_1\circ k$, we have 
  $l_1\equiv\cdots\equiv l_s$,  
 $\widehat{D}\cdot l_j=1$, $-K_{\w{X}_1}\cdot k(l_j)=1$, and $[l_j]$
  belongs to an extremal ray $\sigma$ of $\NE(\widehat{X})$ such that
  $\NE(f_1\circ k)=\NE(k)+\sigma$.}

\smallskip

\noindent We know from Step~4 that $f_1^{-1}(p)$ contains an irreducible
curve of anticanonical degree $1$ through $q$. By (P\ref{P3}), this
gives an exceptional line in $\widehat{X}$ contracted by $f_1\circ k$,
so  $f_1\circ k$ is not $K$-negative. Thus $\NE(f_1\circ
k)=\NE(k)+\sigma$, where $\sigma$ is an extremal ray with
$-K_{\widehat{X}}\cdot\sigma\leq 0$, and by Rem.~\ref{SQM} (2)
$\Lo(\sigma)$ is a disjoint union of numerically equivalent
exceptional lines.

Fix $j\in\{1,\dotsc,s\}$. The image
 $k(l_j)\subset\w{X}_1$ is an
irreducible curve contained in a fiber of $f_1$, so that
$-K_{\w{X}_1}\cdot k(l_j)>0$, while $-K_{\widehat{X}}\cdot l_j=-1$. 
Therefore $l_j\cap\widehat{D}\neq\emptyset$
and $q\in k(l_j)$, in particular $k(l_j)\subseteq f_1^{-1}(p)$. 

By Step~4, if $k(l_j)$ is an irreducible component of $f_1^{-1}(p)_{red}$, then
$-K_{\w{X}_1}\cdot 
  k(l_j)=1$. Otherwise, $k(l_j)$ is contained in a 
$2$-dimensional component $T\cong \mathbb{F}_r$ for some $r\geq 0$.
By (P\ref{P3}) $k(l_j)$ can intersect 
the fibers of the $\pr^1$-bundle on $T$ only in the point $q$.
 Therefore $k(l_j)$ is the fiber of the $\pr^1$-bundle
through $q$ and again
$-K_{\w{X}_1}\cdot k(l_j)=1$.
We deduce that  
$\widehat{D}\cdot l_j=1$ by (P\ref{P2}).

Now notice  that
$\ker(f_1\circ k)_*$ is $2$-dimensional and is generated by $[l_1]$
and $[B]$, where $B$ is a line in the quadric
$\widehat{D}$. We have $\widehat{D}\cdot B=-1$,
$-K_{\widehat{X}}\cdot B=2$, and  $-K_{\widehat{X}}\cdot
l_1=-1$. Thus $[\widehat{D}]$ and $[K_{\widehat{X}}]$ give linearly
independent linear functions on  $\ker(f_1\circ k)_*$, and since 
$l_1,\dotsc,l_s$ have the same intersection with both, we get
$l_1\equiv\cdots\equiv l_s$. Moreover $\sigma$ contains the class of
at least one 
exceptional line, therefore $[l_j]\in\sigma$.

\medskip

\noindent \emph{Step 6: we show that $h$ is just one $D$-negative and
  $K$-negative flip.}

\smallskip

\noindent First of all 
notice that $D\subset\w{X}$ cannot be isomorphic to a quadric
(\emph{e.g.}\ because it has a morphism onto $\Exc(g)$), so that 
$h$ is not an
isomorphism. Let's factor $h$ as
$\w{X}\stackrel{h'}{\dasharrow}\w{X}'\stackrel{h''}{\dasharrow}\widehat{X}$,
where $h'$ is a sequence of $D$-negative flips, and $h''$ is just one 
$D'$-negative flip,  $D'\subset\w{X}'$ the
transform of $D$.
We get a commutative diagram:
$$\xymatrix{
{\w{X}} \ar@/^1pc/@{-->}[rr]^{h}
\ar@{-->}[r]_{h'}\ar[d]_f &{\w{X}'}\ar@{-->}[r]_{h''}\ar[d]^{\ph}&{\widehat{X}}
\ar[d]^k\\
Y\ar[r]^g&{Y_0}&{\w{X}_1}\ar[l]_{f_1}}$$
where  $\ph$ is a
contraction.

Notice that $(h'')^{-1}$ is the flip of a small extremal ray in
$\NE(f_1\circ k)$. By Step~5 $\NE(f_1\circ k)=\NE(k)+\sigma$ and $k$
is a divisorial contraction, therefore  $(h'')^{-1}$ is the flip of
$\sigma$. Since $K_{\widehat{X}}\cdot\sigma>0$, we see that $h''$ is
the flip of a $K$-negative small extremal ray
$\sigma'\subset\NE(\ph)$. Thus we are left 
 to show that $h'$ is
an isomorphism.

\medskip

We show that $\ph$ is $K$-negative. If not,
by Rem.~\ref{SQM} (2) there exists an exceptional line $l'\subset\w{X}'$ such
that $\ph(l')=\{pt\}$. Since $h''$ is a
$K$-negative flip, by Th.~\ref{flip} $\w{X}'\smallsetminus\dom(h'')$ is a
union of exceptional planes, and by Rem.~\ref{SQM} (3) we get $l'\subset\dom(h'')$.
 Therefore the image of $l'$ in $\widehat{X}$ is an
exceptional line contracted by $f_1\circ k$, but whose class is not in
$\sigma$, which contradicts Step~5.

Hence  $\ph$ is $K$-negative, and $\NE(\ph)=\sigma'+\tau$ where $\tau$ is
a $K$-negative extremal ray.

\medskip

Suppose by contradiction
that $h'$ is not an isomorphism. Then $\NE(\ph)$ must contain the
$D'$-positive small extremal ray corresponding to the last flip in the
factorization of $h'$. Since  $\NE(\ph)=\sigma'+\tau$ and
$D'\cdot\sigma<0$, we deduce that $\tau$ is small, $D'\cdot\tau>0$, 
 and $\Lo(\tau)\subset \ph^{-1}(p)$.

In particular,
$\Lo(\tau)$ is a union of exceptional planes which intersect
${D}'$ (see Th.~\ref{flip}). Let $L$ be one of these exceptional planes.

Since also $\Lo(\sigma')$ is a union of exceptional planes, and
$\tau\neq\sigma'$, we have
$\dim(L\cap\Lo(\sigma'))\leq 0$, while $\dim(L\cap D')\geq 1$. Thus 
 the transform 
 $\widehat{L}\subset\widehat{X}$ of $L$ intersects $\widehat{D}$, and
 is 
contained in $(f_1\circ k)^{-1}(p)$. 
Moreover we can find curves in $\widehat{L}$ having positive
intersection with $\widehat{D}=\Exc(k)$, thus
$\widehat{L}\not\subset\widehat{D}$ and $\dim k(\widehat{L})=2$.

Therefore $k(\widehat{L})$ is an irreducible
 component of $f_1^{-1}(p)_{red}$ containing $q$, and by Step~4
we have $k(\widehat{L})\cong\mathbb{F}_r$ for some $r\geq 0$.
Let $C_1,C_2\subset k(\widehat{L})$ two fibers of the $\pr^1$-bundle
not containing $q$. Then their transforms
$\widehat{C}_1,\widehat{C}_2\subset \widehat{X}$ are disjoint and have
anticanonical degree $1$, so they do
not intersect $\Lo(\sigma)$ by Rem.~\ref{SQM} (1). This 
yields 
two disjoint curves in $L\cong\pr^2$, and we have
a contradiction.

\medskip

\noindent \emph{Step 7: $f_1^{-1}(p)$ is a one-dimensional reducible
  fiber of $f_1$, and $s=2$.}

\smallskip

\noindent Since $(g\circ f)^{-1}(p)=D$ and $h$ is just one flip, we have
$(f_1\circ k)^{-1}(p)_{red}=\widehat{D}\cup l_1\cup\cdots\cup l_s$ and
$f_1^{-1}(p)_{red}=k(l_1)\cup\cdots\cup k(l_s)$. We know from Step~5 that
 $-K_{\w{X}_1}\cdot k(l_j)=1$ for every $j=1,\dotsc,s$, while 
 $-K_{\w{X}_1}\cdot f_1^{-1}(p)=2$, so that $s\leq 2$.

Consider now the resolution of the flip $h$ (see Th.~\ref{flip}). We
get a commutative diagram:
$$\xymatrix{&Z\ar[dl]_{\ph}\ar[dr]^{\psi}& &\\
{\w{X}}\ar@{-->}[rr]^h\ar[dr]_{f}&&{\widehat{X}}\ar[r]^{k}& {\w{X}_1}
\ar[dl]^{f_1}\\
&{Y}\ar[r]^{g}&{Y_0}&
}$$
where $\ph$ and $\psi$ are the blow-ups of the indeterminacy loci of
$h$ and $h^{-1}$ respectively. We have 
$$(f_1\circ
k\circ\psi)^{-1}(p)=(g\circ f\circ\ph)^{-1}(p)=\ph^{-1}(D),$$
so that $f_1^{-1}(p)$ cannot be everywhere non-reduced and $s=2$.

\medskip

\noindent \emph{Step 8: we show the statement.}

\smallskip

\noindent We have $\Lo(\sigma)=l_1\cup l_2$
 and $f_1^{-1}(p)=k(l_1)\cup k(l_2)$. 
By the explicit description of the flip $h$
(see Th.~\ref{flip}), and since 
 $\widehat{D}\cdot l_j=1$, we know that
 $D$ is the blow-up of the
(possibly singular but irreducible) quadric $\widehat{D}$
in two smooth points. Let
$L_1,L_2\subset D$ be the exceptional planes; notice that $L_1$ and
$L_2$ lie in the smooth locus of $D$ and are Cartier divisors in $D$.
 Moreover we have $L_1\cup
L_2=\Lo(\sigma')$.

Let $C_{L_i}\subset L_i$ be lines; we have $C_{L_1}\equiv C_{L_2}$ and 
$D\cdot
C_{L_1}=-1$ because $\widehat{D}\cdot l_1=1$ 
(see Rem.~\ref{intersection}). Let moreover
${B}\subset\widehat{D}$ be a general line and $B_0\subset D$
its transform; recall that $-K_{\widehat{X}}\cdot B=2$. Finally
let $F_0\subset\w{X}$ be a
general fiber of $f$, so that $k(h(F_0))$ is a general fiber of $f_1$.
We have:
$$
k\left(h(F_0)\right)\equiv 2k(l_1)\,\text{ in }\,\w{X}_1,\quad
h(F_0)\equiv 2l_1+2B\,\text{ in }\,\widehat{X},\quad\text{and}\quad
F_0\equiv 2B_0-2C_{L_1}\,\text{ in }\,\w{X}.
$$

 Consider now $f_{|D}\colon D\to \Exc(g)$. We have $f(L_i)=\Exc(g)$,
and every fiber of $f_{|D}$ has
dimension one (for instance because a $2$-dimensional fiber should intersect
$L_1$ in a curve, which is impossible). Let $F_D\subset D$ be a fiber
of $f_{|D}$; then $F_D\equiv F_0\equiv 2B_0-2C_{L_1}$.

If $i\colon D\hookrightarrow\w{X}$ is the inclusion and
$i_*\colon\N(D)\to\N(\w{X})$ the associated push-forward of
$1$-cycles, we have $\ker i_*=\R([C_{L_1}]-[C_{L_2}])$ (because
$\dim\N(D)=3$ and $\dim\N(D,\w{X})=2$ by  \eqref{venerdi}).
In particular we get:
$$F_D\equiv_D  2B_0-2C_{L_1}+\lambda(C_{L_1}-C_{L_2}),$$
where $\lambda\in\R$ and $\equiv_D$ denotes numerical equivalence in
$D$.  This gives  
$(L_1\cdot F_D)_D=2-\lambda$ and 
$(L_2\cdot F_D)_D=\lambda$ (where $(\ \cdot\ )_D$ denotes
intersection in $D$),
  so that $\lambda=1$ and  $(L_1\cdot F_D)_D=1$. Therefore $f_{|L_1}\colon
  L_1\to\Exc(g)$ is an isomorphism, $\Exc(g)\cong\pr^2$, and
  $f(C_{L_1})$ is a line in $\Exc(g)$. Moreover
$\Exc(g)\cdot f(C_{L_1})=D\cdot C_{L_1}=-1$, hence
 $g$ is the blow-up of a smooth
point in $Y_0$.
\end{proof}
\begin{proof}[Proof of Th.~\ref{dim3}]
By Cor.~\ref{target}, $Y$ has at most isolated canonical and factorial
singularities, and is a Mori dream space.  
If $f$ is regular, then $\rho_X\leq 11$ by
\cite[Cor.~1.2 (iii)]{fanos}.

Suppose that $Y$ has an elementary rational contraction of fiber type
$g\colon
Y\dasharrow Z$. Then $g\circ f\colon X\dasharrow Z$
is a quasi-elementary rational contraction 
(see Rem.~\ref{pippo} and
Rem.~\ref{composition}), and $\rho_X-\rho_Z=2$.
If $\dim Z\leq 1$, then $\rho_Z\leq 1$ and $\rho_X\leq 3$. 
If instead $\dim Z=2$, 
 Prop.~\ref{S} yields $\rho_Z\leq 9$ and
$\rho_X\leq 11$.

\medskip

Therefore we can assume that 
$f$ is not regular and
 $Y$ has no elementary
rational contraction of fiber type; 
let us also assume that
$\rho_X\geq 6$.

Let $h\colon Y\dasharrow\w{Y}$ be a SQM. Then $h\circ f\colon
X\dasharrow\w{Y}$ is an elementary rational contraction (see
Rem.~\ref{targetMDS}), so that again by Cor.~\ref{target}
 $\w{Y}$ has at most isolated canonical and factorial
singularities. 

We notice that $h\circ f$ cannot be regular. Indeed $f$ is not regular
over some exceptional plane $L\subset X$, such that the lines
contained in $L$ have numerical class in some extremal ray $\sigma$ of
$\NE(X)$. If $h\circ f$ were a morphism, it would be an elementary contraction of fiber type. In particular we would have $\NE(h\circ f)\neq\sigma$,
 so $h\circ f$ should be finite on
$L$, and $\dim(h\circ f)(L)=2$. Thus $h^{-1}\colon\w{Y}\dasharrow Y$
 should be regular on
an open subset of $(h\circ f)(L)$, and $f$ should be regular on an
open subset of $L$, a contradiction.

Consider an elementary contraction $g\colon\w{Y}\to Y_0$. By our
assumptions, $g$ must be birational, 
 therefore Lemmas~\ref{small} and
\ref{divisorial} apply. We deduce that 
either $g$ is the
blow-up of a smooth point of $Y_0$, or $\Exc(g)$ a disjoint
union of smooth rational curves, lying in the smooth locus of $\w{Y}$,
with normal bundle $\mathcal{O}_{\pr^1}(-1)^{\oplus 2}$ -- we call such
a curve a \emph{$(-1,-1)$-curve}. 
We also notice that in the case of the blow-up we have
$K_{\w{Y}}=g^*K_{Y_0}+2\Exc(g)$, hence $[K_{\w{Y}}]\not\in g^*(\N(Y_0))$.

Therefore $-K_{\w{Y}}\cdot\NE(g)\geq 0$ for every elementary contraction
$g$ of $\w{Y}$, and we deduce that $-K_{\w{Y}}$ is nef. In particular we
can take $\w{Y}=Y$, so that
 $-K_Y$ is nef. Moreover,
 since $h\colon Y\dasharrow\w{Y}$ factors as a sequence of flips of small
extremal rays as above, it is not difficult to see that
for every
irreducible curve $C\subset Y$ such that $C\cap\dom(h)\neq\emptyset$,
 we have
$-K_Y\cdot C=-K_{\w{Y}}\cdot\w{C}$, where $\w{C}\subset\w{Y}$ is the
 transform of $C$.

\medskip

By our assumptions, 
there exists a non-movable prime divisor $E\subset Y$ (otherwise
$\Mov(Y)=\Eff(Y)$ and  
Cor.~\ref{tobia} would yield 
an elementary rational contraction of fiber type on $Y$).
Applying
Rem.~\ref{nonmovable}, we find  a SQM $h_0\colon Y\dasharrow\w{Y}_0$ such that
the transform $\w{E}\subset\w{Y}_0$ of $E$ is the exceptional divisor of
an elementary divisorial contraction, so that
$\w{E}\cong\pr^2$ and 
$\mathcal{N}_{\w{E}/\w{Y}_0}\cong\mathcal{O}_{\pr^2}(-1)$. 

\medskip

Consider now  the contraction $\ph\colon Y\to
T$ defined by $\NE(Y)\cap K_Y^{\perp}$.
We show that $\ph$ is birational, \emph{i.e.} that $-K_Y$ is big.  Since $h_0$
factors as a sequence of $K$-trivial flips, the map $\w{\ph}:=\ph\circ
h_0^{-1}\colon \w{Y}_0\to T$ is regular, and $-K_{\w{Y}_0}$ is the
pull-back of some ample Cartier divisor on $T$. 
In particular
$\w{\ph}$ is finite on $\w{E}$, so that
$\dim\w{\ph}(\w{E})=2$. This also shows that $\ph$ is generically
finite on $E$.

By
contradiction, if $\ph$ is of fiber type, then $T=\w{\ph}(\w{E})$ and $\rho_T=1$.
In particular, $\R_{\geq 0}[-K_Y]=\ph^*(\Nef(T))$ is
a one-dimensional cone in $\mathcal{M}_Y$.
On the other hand, since $-K_Y$ is not
big, this cone 
must lie on the boundary of $\Eff(Y)$, and hence on the
boundary of $\Mov(Y)$. Therefore we can choose a cone 
$\tau\in\mathcal{M}_Y$ of dimension $\rho_Y-1$,
containing $\R_{\geq 0}[-K_Y]$, and lying on the boundary of
$\Mov(Y)$. The corresponding rational contraction $g_1\colon
Y\dasharrow 
Y_1$   is elementary, and cannot be small (see Ex.~\ref{example}) nor of
fiber type (by our assumptions), therefore it is divisorial. 
On the other hand if $H\subset \N(Y)$ is the linear span of $\tau$, we have
$[K_Y]\in 
H=g_1^*(\Nu(Y_1))$, and this contradicts our previous description of
elementary divisorial rational contractions of $Y$.

Therefore $-K_Y$ is nef and big, namely $Y$ is an \emph{almost Fano
variety}, and $\ph$ is birational. Moreover $\dim\Exc(\ph)\leq 1$,
because we have already shown that $\ph$ is generically finite on
every non-movable prime divisor. 

\medskip

We are going to proceed similarly to the proof
of \cite[Prop.~2.8]{priska}.
Let $\sigma_1,\dotsc,\sigma_r$ 
be the divisorial extremal rays of $\NE(Y)$, 
and set $E_i:=\Lo(\sigma_i)$.
 Then $E_1,\dotsc,E_r$ are pairwise disjoint, so that
 $E_i\cdot\sigma_j=0$ if $i\neq j$. It is then easy to
 see\footnote{See \emph{e.g.}\ \cite[Rem.~4.6]{31} for a similar statement.}
 that
$\sigma_1+\cdots+\sigma_r$ is an $r$-dimensional face of $\NE(Y)$,
 whose 
contraction $k\colon Y\to Y_r$ is just the blow-up of $r$
distinct smooth points of $Y_r$. 

Notice that $Y_r$ has isolated canonical
and factorial singularities, and is a Mori dream space by
Rem.~\ref{targetMDS}. Since $k^*(-K_{Y_r})=-K_Y+2(E_1+\cdots+E_r)$, we
see that $-K_{Y_r}$ is nef, and that if $C\subset Y_r$ is an
irreducible curve containing some
 point blown-up by $k$, then $-K_{Y_r}\cdot C\geq 2$.
Moreover we have:
$$
\rho_X=\rho_{Y_r}+r+1\quad\text{and}\quad (-K_Y)^3=(-K_{Y_r})^3-8r,
$$
in particular $(-K_{Y_r})^3\geq (-K_Y)^3>0$, so that
$-K_{Y_r}$ is big, and $Y_r$ is again almost Fano.
It is shown in \cite{prok} that
$(-K_{Y_r})^3\leq 72$, which yields
 $r\leq 8$ and $\rho_X\leq\rho_{Y_r}+9$.

\medskip

There exists some extremal ray $\tau$ of $\NE(Y_r)$ with
$-K_{Y_r}\cdot\tau>0$; let  $\pi\colon Y_r\to Z$ be
the corresponding contraction.
We show that $\dim Z\leq 1$, excluding by contradiction all the other
cases. This gives $\rho_{Y_r}\leq 2$ and $\rho_X\leq 11$, 
and concludes the proof.

Suppose first that $\pi$ is birational. If 
$\Exc(\pi)\cap k(\Exc(k))=\emptyset$, we  get a $K$-negative, birational
extremal ray $\sigma'$ of $\NE(Y)$ different from
$\sigma_1,\dotsc,\sigma_r$, a contradiction. Therefore 
 $\Exc(\pi)$ must contain some of the points blown-up by
$k$. 

If $\pi$ is not of type $(2,0)$, then
every non-trivial fiber $F$ of $\pi$ has dimension $1$, and by
\cite[Cor.~1.15]{AWaview} we have $F\cong\pr^1$ and $-K_{Y_r}\cdot
F=1$. In particular, $F$ cannot contain any point blown-up by $k$, so
that $\Exc(\pi)\cap k(\Exc(k))=\emptyset$, a contradiction.

If $\pi$ is of type $(2,0)$, 
the possibilities for $\Exc(\pi)$ and $(-K_{Y_r})_{|\Exc(\pi)}$ are given
by Th.~\ref{components}. We see that the only case where $\Exc(\pi)$ is
not covered by curves of anticanonical degree $1$ is when
$\Exc(\pi)\cong\pr^2$ and $(-K_{Y_r})_{|\Exc(\pi)}=\mathcal{O}_{\pr^2}(2)$. 
On the other hand, in this case the transform of $\Exc(\pi)$ in $Y$
would be covered by curves of anticanonical degree zero, which
contradicts the fact that $\Exc(\ph)$ contains no divisors.

\medskip

Finally, suppose that 
$\dim Z=2$. By Th.~\ref{smallfibers}, the general fiber of $\pi$ is a
smooth rational curve of anticanonical degree $2$, therefore
$-K_{Y_r}\cdot F=2$ for every fiber $F$ of $\pi$.

 For every
$i=1,\dotsc,r$ let $F_i$ be the fiber of $\pi$ through the point
 $k(E_i)$. Since $k(E_i)$ cannot be contained in curves of
 anticanonical degree one, $F_i$ must be an integral fiber;
let $C_i\subset Y$ 
be its transform. The formula $k^*(-K_{Y_r})=-K_Y+2(E_1+\cdots+E_r)$ gives:
$$-K_{Y}\cdot C_i=0, \quad E_i\cdot
C_i=1,\quad\text{and}\quad E_i\cdot C_j=0\ \text{ if }\ i\neq j;$$ in particular
  $[C_1],\dotsc,[C_r]$ are
 linearly independent in $\N(Y)$.

Consider now the contraction $\pi\circ k\colon Y\to Z$, and
the face $\eta:=\NE(\pi\circ k)\cap
K_Y^{\perp}$ of $\NE(Y)$. The unique
irreducible curves of anticanonical degree zero contracted by
$\pi\circ k$ are $C_1,\dotsc,C_r$, therefore 
$\eta=\R_{\geq 0}[C_1]+\cdots+\R_{\geq 0}[C_r]$ is an
$r$-dimensional face of $\NE(Y)$. This implies that each $\R_{\geq 0}[C_i]$ is
an extremal ray of $\NE(Y)$, and $C_i$ is a $(-1,-1)$-curve.

We claim that there exists a SQM $Y\dasharrow\widehat{Y}$ whose
indeterminacy locus is exactly $C_1\cup\cdots\cup C_r$; this can be
constructed inductively as follows. 

Take a nef divisor  $H$ in $Y$
such that $\NE(Y)\cap H^{\perp}=\eta$, 
 consider the flip
$Y\dasharrow Y_1$ of $\R_{\geq 0}[C_1]$,  
 and let
$C_1'\subset Y_1$ be the new $(-1,-1)$-curve. Then
 $[C_1'],[C_2],\dotsc,[C_r]$\footnote{We still denote by $C_i$ the
   transform of $C_i$, for $i=2,\dotsc,r$.} 
are linearly independent in $\N(Y_1)$, and
$H$ yields a nef
 divisor $H_1$ on $Y_1$ such that $\NE(Y_1)\cap H_1^{\perp}=
\R_{\geq 0}[C'_1]
+\R_{\geq 0}[C_2]+\cdots+\R_{\geq 0}[C_r]$. Hence for $i=2,\dotsc,r$ each
$\R_{\geq 0}[C_i]$ stays a small extremal ray in $Y_1$. Now we can
flip $\R_{\geq 0}[C_2]$, and proceed in the same way.

In the end we get a commutative diagram:
$$\xymatrix{Y\ar@{-->}[r]\ar[d]_k & {\widehat{Y}}\ar[d]^{\widehat{k}} \\
{Y_r}\ar[r]^{\pi}& Z}$$
where $\widehat{k}\colon \widehat{Y}\to Z$ is a contraction.

The transform $\widehat{E}_i\subset\widehat{Y}$ of $E_i$
is isomorphic to $\mathbb{F}_1$, and contains a $(-1,-1)$-curve
$\widehat{C}_i$ as the $(-1)$-curve. If $G_i\subset \widehat{E}_i$ is a fiber of
the $\pr^1$-bundle, and $G_0\subset \widehat{Y}$ a general fiber of
$\widehat{k}$, 
it is not difficult to see that $G_0\equiv G_i$, so that
$$\NE(\widehat{k})=\R_{\geq 0}[G_0]+\R_{\geq 0}[\widehat{C}_1]+\cdots+\R_{\geq
  0}[\widehat{C}_r].$$
Since $\dim\NE(\widehat{k})=r+1$, this implies that $\R_{\geq 0}[G_0]$ is an
extremal ray of $\NE(\widehat{Y})$, whose contraction is of fiber type.
Thus $Y$ has an elementary rational contraction of fiber type, which
contradicts our assumptions, and this concludes the proof.
\end{proof}
\begin{proof}[Proof of Th.~\ref{main}]
The statement follows from \cite{fanos} when $X$ has a regular
elementary contraction of fiber type (see the Introduction). The
general statement follows from
Cor.~\ref{pluto}, Prop.~\ref{S}, and Th.~\ref{dim3}. 
\end{proof}
\section{Fano $4$-folds with $c_X=1$ or $c_X=2$}\label{ultima}
In this section we show the following results, which imply Th.~\ref{terzo}.
\begin{proposition}\label{due}
Let $X$ be a Fano $4$-fold with $\rho_X\geq 6$ and $c_X=2$.
Then one of the following holds:
\begin{enumerate}[$(i)$]
\item $\rho_X\leq 12$, and there is a diagram
$$X\longrightarrow X_1\stackrel{h}{\dasharrow}\w{X}_1\longrightarrow Y$$
where $X_1$ is a Fano $4$-fold, $h$ is a SQM,
$\w{X}_1\to Y$ is an elementary contraction and a 
conic bundle, and $X\to X_1$ is the
blow-up of a smooth irreducible surface contained in $\dom(h)$;
\item there exists a Fano $4$-fold $Y$ and $X\to Y$ a blow-up of
  two disjoint smooth irreducible surfaces.
\end{enumerate}
\end{proposition}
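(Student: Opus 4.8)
The plan is to reduce the problem, as in the proofs of Th.~\ref{dim3} and Prop.~\ref{S}, to producing a genuine divisorial contraction of type $(3,2)^{sm}$ on $X$ and then analysing its target. Since $c_X=2$, I fix a prime divisor $D\subset X$ with $\dim\N(D,X)=\rho_X-2$, and recall that every prime divisor $D'\subset X$ satisfies $\dim\N(D',X)\geq\rho_X-2\geq 4$. I would first note the easy reductions: the existence of a small elementary contraction is compatible with $c_X=2$ by Cor.~\ref{eco}, while if $X$ admits an elementary rational contraction of fiber type then $\rho_X\leq 11$ by Th.~\ref{main}; since even in the latter situation I still have to exhibit the blow-up/conic-bundle diagram of $(i)$, the fiber-type case is treated together with the main construction rather than disposed of separately.

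The central step is to show that $X$ carries a genuine elementary divisorial contraction $\pi_1\colon X\to X_1$ of type $(3,2)^{sm}$, that is, the blow-up of a smooth irreducible surface $S_1$, with $X_1$ Fano. When $D$ is non-movable I apply Th.~\ref{effective}. The types $(3,1)$, $(3,0)^{\pr^3}$ and $(3,0)^Q$ have $\dim\N(\w D,\w X)\leq 2$, so by Cor.~\ref{dimension} and Rem.~\ref{divisors} they can reach $\dim\N(D,X)=\rho_X-2$ only through at least $\rho_X-4$ dimension-dropping $D$-negative flips; I expect to eliminate these (or else to read off from them directly the conic-bundle structure of $(i)$) by combining $c_X=2$ with the description of elementary divisorial rational contractions in Cor.~\ref{monaco}. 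This leaves $D$ of type $(3,2)$, in which case $X=\w X$ and $\pi_1=f$ is a genuine $(3,2)$ contraction. To upgrade it to $(3,2)^{sm}$ I would exclude $2$-dimensional fibers: by Th.~\ref{components} such a fiber would contain a surface $T$ (a $\pr^2$, a cone, or an $\mathbb{F}_r$), whose presence collapses $\N(D,X)$ enough to force $c_X\geq 3$, contrary to the hypothesis $c_X=2$ (cf.\ Th.~\ref{cod}). Then Th.~\ref{smallfibers} identifies $\pi_1$ as the blow-up of a smooth surface $S_1$, and one checks $X_1$ is Fano exactly as in Cor.~\ref{monaco}. The case where the optimal $D$ is movable I would handle by running a Mori program for $D$, which either reproduces the same blow-up on a SQM or yields a quasi-elementary fiber-type rational contraction bounding $\rho_X$.

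With $\pi_1\colon X\to X_1$ in hand and $\rho_{X_1}=\rho_X-1\geq 5$, the plan is then to split according to the birational geometry of $X_1$. If $X_1$ admits an elementary rational contraction of fiber type onto a $3$-fold, I would bring it to the form of a conic bundle $\w X_1\to Y$ (a SQM followed by an elementary fiber-type contraction with one-dimensional general fibre, via Rem.~\ref{ratsing} and Th.~\ref{smallfibers}); this is case $(i)$, and Th.~\ref{dim3} applied to $X_1$ gives $\rho_{X_1}\leq 11$, hence $\rho_X\leq 12$. Otherwise I would produce a second divisorial contraction of $X_1$ of type $(3,2)^{sm}$, repeating the analysis of the previous paragraph for a suitable non-movable divisor of $X_1$, and show that its centre is disjoint from $S_1$: as in the proof of Th.~\ref{dim3}, two distinct divisorial extremal rays $\sigma_1,\sigma_2$ satisfy $E_i\cdot\sigma_j=0$ for $i\neq j$, so the two exceptional divisors are disjoint and $\pi_1$ composed with the second blow-down realises $X$ as the blow-up of a Fano $4$-fold $Y$ along two disjoint smooth surfaces, which is case $(ii)$.

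I expect the main obstacle to be the step producing $\pi_1$: ruling out the flipping types $(3,1)$ and $(3,0)$ and the existence of $2$-dimensional fibres requires a careful dimension count for $\N(D,X)$ under flips and divisorial contractions, controlled simultaneously by $c_X=2$ and by the fibre classification of Th.~\ref{components}. The second delicate point is the dichotomy on $X_1$: deciding precisely when a conic-bundle rational contraction exists, guaranteeing that $X_1$ and $Y$ remain Fano, and establishing the disjointness of the two centres in case $(ii)$. These two points, rather than the final bound $\rho_X\leq 12$ (which is a formal consequence of Th.~\ref{dim3}), should carry essentially all of the difficulty.
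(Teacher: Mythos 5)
Your proposal takes a different route from the paper, but its central step --- producing the regular blow-up $\pi_1\colon X\to X_1$ of a smooth surface with $X_1$ Fano --- has gaps that I do not see how to close. First, you cannot ``eliminate'' the types $(3,1)$, $(3,0)^{\pr^3}$, $(3,0)^Q$ when applying Th.~\ref{effective} to your chosen $D$: these are genuine outcomes of that theorem, perfectly compatible with $c_X=2$ (the inequality \eqref{oggi} only forces many flips, it excludes nothing), and when they occur the divisorial contraction exists only on a SQM of $X$, so neither conclusion $(i)$ nor $(ii)$ --- both of which need a \emph{regular} $(3,2)^{sm}$ contraction of $X$ itself --- can be read off. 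Second, your argument for excluding $2$-dimensional fibers of a $(3,2)$ contraction is unfounded: such a fiber $T$ is a surface, not a divisor, so its presence puts no constraint on $\dim\N(D',X)$ for any prime divisor $D'$ and cannot contradict $c_X=2$; Th.~\ref{components} classifies these fibers but does not forbid them. Third, even granting type $(3,2)^{sm}$, neither Th.~\ref{effective}$(i)$ nor Cor.~\ref{monaco}$(i)$ asserts that the target of a $(3,2)$ contraction is Fano (they do so only in the other cases); Fano-ness of $X_1$ requires knowing that the contracted ray is the \emph{unique} $E_1$-negative extremal ray, which you never establish. Finally, the case where $D$ is movable is waved away.

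What is missing is precisely the engine of the paper's proof: the structure results \cite[Prop.~2.5 and Lemma~2.8]{codim}, applied to a divisor with $\codim\N(D,X)=2$. The first produces one or two \emph{disjoint} prime divisors $E_i$ which are smooth $\pr^1$-bundles with $E_i\cdot F_i=-1$, $D\cdot F_i>0$ and $[F_i]\not\in\N(D,X)$; Rem.~\ref{allafine} then shows $\R_{\geq 0}[F_i]$ is the unique $E_i$-negative extremal ray (giving nefness of $-K_X+E_1(+E_2)$, hence a Fano target), while $D\cdot F_i>0$ together with $[F_i]\not\in\N(D,X)$ forces all fibers to be one-dimensional, giving type $(3,2)^{sm}$. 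With two divisors this is exactly case $(ii)$; with one divisor, \cite[Lemma~2.8]{codim} supplies a Mori program for $-D$ starting with $X\to X_1$ and ending with a $(4,3)$ contraction, together with the crucial fact that the blown-up surface $S$ lies in the locus where $X_1\dasharrow X_k$ is an isomorphism --- this is what yields the condition $S\subset\dom(h)$ in case $(i)$, a requirement of the statement that your proposal never addresses. When that program is not a sequence of flips, the paper needs a further delicate argument (Cor.~\ref{monaco}, Rem.~\ref{SQM}, a degeneration of fibers) to extract a second $(3,2)^{sm}$ divisor disjoint from $E_1$; your appeal to ``$E_i\cdot\sigma_j=0$'' from the proof of Th.~\ref{dim3} is not available, since that disjointness is proved there for point blow-ups of a $3$-fold via Lemma~\ref{divisorial}, and exceptional divisors of distinct $(3,2)$ rays on a Fano $4$-fold need not be disjoint.
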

\begin{proposition}\label{uno}
Let $X$ be a Fano $4$-fold with $\rho_X\geq 6$ and $c_X=1$. Then one
of the following holds: 
\begin{enumerate}[$(i)$]
\item $\rho_X\leq 11$ and $X$ has a SQM $\w{X}$ with an elementary
  contraction of fiber type $\w{X}\to Y$ which is a conic bundle; 
\item 
$X$ is obtained by blowing-up a Fano $4$-fold $Y$ in a smooth
irreducible  surface.
\end{enumerate}
\end{proposition}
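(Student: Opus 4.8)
The plan is to reproduce, for $c_X=1$, the strategy used in Prop.~\ref{S} and Th.~\ref{dim3}: to read off the structure of $X$ from its fiber-type rational contractions and from its non-movable prime divisors, via Th.~\ref{main} and Th.~\ref{effective}. The hypothesis $c_X=1$ enters only through the fact that every prime divisor $D\subset X$ satisfies $\dim\N(D,X)\ge\rho_X-1$. The fundamental dichotomy I would use is whether or not $X$ admits a rational contraction of fiber type; these two alternatives should produce cases $(i)$ and $(ii)$ respectively.

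First suppose $X$ has a rational contraction of fiber type, and choose an elementary one $f\colon X\dasharrow Y$ (arising from a one-dimensional face of $\NM(X)$). Then $\rho_X\le 11$ by Th.~\ref{main}, which is the bound of $(i)$. To exhibit the conic bundle I would reduce to $\dim Y=3$ — when $\dim Y\le 2$ one composes with a fiber-type contraction of $Y$, or of a general fiber, to descend to a $3$-dimensional target, as in Rem.~\ref{composition} — and then factor $f$ as $X\dasharrow\w{X}\stackrel{\w{f}}{\to}Y$ with $\w{X}$ a smooth SQM and $\w{f}$ a $K$-negative elementary contraction of type $(4,3)$, following Rem.~\ref{ratsing}. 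By Th.~\ref{smallfibers} it then suffices to exclude isolated $2$-dimensional fibers of $\w{f}$. Here $c_X=1$ is used together with Rem.~\ref{SQM}$(1)$: since $f$ is not regular, $\w{X}$ contains an exceptional line, which cannot meet any curve of anticanonical degree $1$; combining this with the classification of $2$-dimensional fibers in Th.~\ref{components} (the general fiber being a smooth rational curve of anticanonical degree $2$) rules out such a fiber. The result is a genuine conic bundle $\w{X}\to Y$, giving case $(i)$.

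Now suppose $X$ has no rational contraction of fiber type. Then the boundaries of $\Mov(X)$ and $\Eff(X)$ meet only at the origin, so $\Mov(X)\subsetneq\Eff(X)$ and by Rem.~\ref{nonmovable} there is a non-movable prime divisor $D\subset X$. Applying Th.~\ref{effective}, I obtain $X\dasharrow\w{X}\stackrel{f}{\to}Y$ with $\w{D}=\Exc(f)$ of one of the four listed types. The crucial step is to exclude the types $(3,1)$, $(3,0)^{\pr^3}$ and $(3,0)^Q$. In each of these $Y$ is Fano (in the last case up to an isolated factorial terminal point) with $\rho_Y=\rho_X-1\ge 5$, while by \eqref{oggi} the SQM $X\dasharrow\w{X}$ factors through at least $\rho_X-3$ flips, forcing $D$ to contain at least $\rho_X-3$ exceptional planes. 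I would derive a contradiction either by pulling a fiber-type rational contraction back from $Y$ to $X$ through $\w{f}$ and the SQM (contradicting the standing assumption, using Rem.~\ref{composition} and Rem.~\ref{targetMDS}), or by showing that so many exceptional planes inside a single prime divisor are incompatible with $c_X=1$; this exclusion is the step I expect to be the main obstacle. Granting it, $D$ must be of type $(3,2)$ with $X=\w{X}$, so that $f\colon X\to Y$ is a \emph{regular} elementary divisorial contraction of type $(3,2)$ whose exceptional divisor $D$ contains no exceptional plane.

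It remains to identify $f$ with the blow-up of a smooth surface in a Fano $4$-fold. Since $c_X=1$ gives $\dim\N(D,X)\ge\rho_X-1$, hence $\dim\N(f(D),Y)\ge\rho_Y-1$, and since $D$ is covered by curves of anticanonical degree $1$ and contains no exceptional plane, I would rule out $2$-dimensional fibers of $f$ by inspecting the list of Th.~\ref{components}, so that $f$ is of type $(3,2)^{sm}$ and, by Th.~\ref{smallfibers}, the blow-up of a smooth surface $S\subset Y$ with $Y$ smooth. The remaining and most delicate point is that $Y$ is Fano: I would check $-K_Y\cdot C>0$ on every extremal ray of $\NE(Y)$ by transporting curves to $X$ via $-K_X=f^*(-K_Y)-D$ and using that the absence of a fiber-type rational contraction on $X$ forces $-K_Y$ to remain positive on every elementary contraction of $Y$ — exactly the nef-and-big bookkeeping for $-K_Y$ performed in the proof of Th.~\ref{dim3}, here upgraded to ampleness. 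This yields case $(ii)$ and completes the argument.
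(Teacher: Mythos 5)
Your proposal diverges from the paper's proof in a way that leaves real gaps, and the divergence is precisely that you never use the one external tool the paper's argument rests on. The paper proves Prop.~\ref{uno} ``very similarly'' to Prop.~\ref{due}: one fixes a prime divisor $D$ with $\codim\N(D,X)=1$ and applies \cite[Prop.~2.5 and Lemma~2.8]{codim} to it. That result gives a dichotomy adapted to $D$: either it produces a prime divisor $E_1$ which is a smooth $\pr^1$-bundle with fiber $F_1$ satisfying $E_1\cdot F_1=-1$, $D\cdot F_1>0$ and $[F_1]\notin\N(D,X)$, or it produces a Mori program for $-D$ consisting of flips only, ending with an elementary contraction of fiber type which is \emph{finite on the transform of $D$}. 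In the first case, Rem.~\ref{allafine} together with the transversality to $D$ shows that $\R_{\geq 0}[F_1]$ is an extremal ray of type $(3,2)^{sm}$ and is the \emph{unique} $E_1$-negative ray, so $-K_X+E_1$ is nef with $(-K_X+E_1)^{\perp}\cap\NE(X)$ equal to that ray, and the blow-down of $E_1$ is a smooth blow-up of a surface in a Fano $4$-fold: case $(ii)$. In the second case the program is a SQM $X\dasharrow X_k$ followed by an elementary contraction $X_k\to Y$; finiteness on $D_k$ (a divisor positive on the contracted ray) forces $\dim Y=3$ and excludes $2$-dimensional fibers, so $X_k\to Y$ is a conic bundle, and Th.~\ref{dim3} gives $\rho_X\leq 11$: case $(i)$. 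You use $c_X=1$ only through $\dim\N(D,X)\geq\rho_X-1$, which is not enough to reconstruct either half of this dichotomy.

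Concretely, neither of your branches closes. In your first branch: the existence of an \emph{elementary} rational contraction of fiber type does not follow from the existence of a fiber-type one (the Lemma on one-dimensional faces of $\NM(X)$ yields a Mori program whose final elementary contraction lives on a variety that is a SQM of $X$ only if no divisorial contraction occurred, which is exactly what is at issue), and without elementarity Th.~\ref{main} only gives $\rho_X\leq 17$; your ``reduction to $\dim Y=3$'' is backwards, since composing with contractions of $Y$ lowers the target dimension; and your exclusion of $2$-dimensional fibers both assumes $f$ is not regular and, even then, only rules out fibers that actually meet an exceptional line -- in the paper the conic-bundle property comes from finiteness on $D_k$, not from Rem.~\ref{SQM}~(1). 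In your second branch you yourself flag the decisive step -- excluding types $(3,1)$, $(3,0)^{\pr^3}$, $(3,0)^Q$ for an \emph{arbitrary} non-movable $D$ -- as ``the main obstacle,'' and neither of your suggestions fills it: $Y$ is subject to the same standing hypothesis, so there is no fiber-type rational contraction of $Y$ to pull back, and many exceptional planes inside $D$ are perfectly compatible with $c_X=1$, since that hypothesis makes $\N(D,X)$ large, which is exactly what \eqref{oggi} already exploits. Finally, even granting type $(3,2)$, you have no mechanism to upgrade it to $(3,2)^{sm}$ (nothing in $c_X=1$ or Th.~\ref{components} visibly forbids an isolated $2$-dimensional fiber of the $(3,2)$ contraction), and the ampleness of $-K_Y$ should come from Rem.~\ref{allafine} applied to the $\pr^1$-bundle $\Exc(f)$ once $(3,2)^{sm}$ is known, not from an ``upgrade'' of the nef-and-big bookkeeping of Th.~\ref{dim3}, which is a statement about $3$-fold targets and proves nefness, not ampleness. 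The paper avoids all of these issues at once because \cite[Prop.~2.5]{codim} hands it a divisor that is already a $\pr^1$-bundle transverse to $D$.
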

For the proofs of Prop.~\ref{due} and \ref{uno}, we need the following property.
\begin{remark}\label{allafine}
 Let $X$ be a Fano $4$-fold with $c_X\leq 2$ and $\rho_X\geq 6$. 
Let $E\subset X$ be a prime divisor which is a smooth $\pr^1$-bundle, with fiber $F\subset E$, such that $E\cdot F=-1$.
Then $\R_{\geq 0}[F]$ is an extremal ray of type $(3,2)$,
and it is 
the unique $E$-negative extremal ray of $\NE(X)$.
\end{remark}
\begin{proof}
Let $\sigma_1,\dotsc,\sigma_h$ be the $E$-negative extremal rays of $\NE(X)$
 (notice that $h\geq 1$, because $E$ is not nef). Fix
$i\in\{1,\dotsc,h\}$. We have $\Lo(\sigma_i)\subseteq E$.

If $\sigma_i$ is of type $(3,0)$ or $(3,1)$, then $\dim\N(E,X)\leq 2$, a contradiction because $c_X\leq 2$ and $\rho_X\geq 6$.
If $\sigma_i$ is small, then $\Lo(\sigma_i)$ is a union of exceptional planes (by \cite{kawsmall}), which  must intersect every fiber of the $\pr^1$-bundle structure on $E$.  This yields $\dim\N(E,X)=2$, 
again a contradiction.

Therefore $\sigma_i$ is of type $(3,2)$,
$E=\Lo(\sigma_i)$, and $(-K_X+E)\cdot \sigma_i=0$.
This shows that $-K_X+E$ is nef, and 
$\tau:=\sigma_1+\cdots+\sigma_h=(-K_X+E)^{\perp}\cap\NE(X)$ is a face containing 
$[F]$. 

If $\dim \tau>1$, any $2$-dimensional face of
$\tau$ yields a contraction of $X$ onto $Z$ with $\rho_X-\rho_Z=2$, 
sending $E$ to a point or to a
curve.  This implies that $\dim\N(E,X)\leq 3$, again a contradiction. Thus $h=1$ and $\sigma_1=\R_{\geq 0}[F]$. 
\end{proof}
\begin{proof}[Proof of Prop.~\ref{due}]
Let $D\subset X$ be a prime divisor with $\codim\N(D,X)=2$;
we apply \cite[Prop.~2.5]{codim} to $D$.

Suppose first that we
get two disjoint prime divisors $E_1,E_2$ which are smooth $\pr^1$-bundles, with
fibers $F_i\subset E_i$, such that $E_i\cdot F_i=-1$, $D\cdot F_i>0$, and $[F_i]\not\in\N(D,X)$, for $i=1,2$
(that is, $s=2$ in \cite[Prop.~2.5]{codim}).

Fix $i\in\{1,2\}$.
By Rem.~\ref{allafine}, $\R_{\geq 0}[F_i]$ is an extremal ray of type $(3,2)$, and it is the unique $E_i$-negative extremal ray of $\NE(X)$. 
If $F_0$ is a fiber of the associated contraction, then $F_0\cap D\neq \emptyset$ (for $D\cdot F_i>0$), and $\dim F_0\cap D=0$ (for $[F_i]\not\in\N(D,X)$). Therefore $\dim F_0=1$, and the ray  $\R_{\geq 0}[F_i]$ is of type $(3,2)^{sm}$.

This also shows that $-K_X+E_1+E_2$ is nef, and
$(-K_X+E_1+E_2)^{\perp}\cap\NE(X)=\R_{\geq 0}[F_1]+\R_{\geq 0}[F_2]$ is a face of $\NE(X)$. The associated contraction
  $\ph\colon X\to Y$ 
is the smooth blow-up of two disjoint
 irreducible surfaces.
 Moreover $Y$ is Fano, because
$\ph^*(-K_{Y})=-K_X+E_1+E_2$, therefore we have $(ii)$.

\medskip

Suppose now that \cite[Prop.~2.5]{codim} applied to $D$
gives just one prime divisor $E_1$. 
As in the previous case, we see that $E_1$ is the exceptional divisor
of the blow-up $f_0\colon X\to X_1$ of a Fano $4$-fold $X_1$ along a smooth
surface $S=f_0(E_1)$. Moreover we
 are in the situation of
 \cite[Lemma~2.8]{codim}, and we have a sequence:
$$X=X_0\stackrel{f_0}{\longrightarrow}X_1
\stackrel{f_{1}}{\dasharrow}X_2
\dasharrow\cdots\dasharrow
X_{k-1}\stackrel{f_{k-1}}{\dasharrow}X_k\stackrel{f_k}{\longrightarrow}
Y$$
which is a Mori program for $-D$, where $f_k$
 is an elementary contraction of type $(4,3)$, finite on $D_k\subset X_k$.
Finally
$S\subset X_1$ is contained in the open subset where
the birational map $X_1\dasharrow X_k$ is an isomorphism.

If $f_1,\dotsc,f_{k-1}$ are all
flips, then $X_1\dasharrow X_k$ is a SQM, and we get $\rho_{X_1}\leq 11$ by
Th.~\ref{dim3}. Hence $\rho_X\leq 12$ and we have $(i)$.

\medskip
 
Suppose now that $f_1,\dotsc,f_{k-1}$ are not all flips.
Since the map $X_1\dasharrow X_k$ is an isomorphism on $S$, we can
replace the sequence above by:
$$X=X_0\stackrel{g_0}{\dasharrow}X'_1
\stackrel{g_{1}}{\dasharrow}X'_2
\dasharrow\cdots\dasharrow
X'_{k-1}\stackrel{g_{k-1}}{\longrightarrow}X_k\stackrel{f_k}{\longrightarrow}
Y,$$
where $g_{k-1}\colon X'_{k-1}\to X_k$ is the blow-up of the image
of $S$, and $g_0,\dotsc,g_{k-2}$ are not all flips. Notice that the
birational map $X\dasharrow X'_{k-1}$ gives an isomorphism between
$E_1$ and $\Exc(g_{k-1})$.

Let 
 $i\in\{0,\dotsc,k-2\}$ be the first index such that $g_{i}$
is a divisorial contraction.  
We have:
$$X\stackrel{\ph}{\dasharrow} X'_i\stackrel{g_i}{\longrightarrow} 
X'_{i+1}{\dasharrow}  X_k
\stackrel{f_k}{\longrightarrow}Y,$$
where $\ph$ is a SQM.
Since $\rho_{X}\geq 6$, Cor.~\ref{monaco} applies to $g_i$.

Let $E_2\subset X$ be the transform of
$\Exc(g_{i})$, and  $p\in E_2$ a point which does not
belong to any exceptional plane. Notice that $E_1\cap E_2=\emptyset$.

Proceeding as in \cite[proof of Lemma~2.8]{codim}, we construct a curve $C\subset X$
 with the following properties:
\begin{enumerate}[$(1)$]
\item $p\in C$ and $C$ is 
numerically equivalent to a general fiber $C_0$ of the map
$X\dasharrow Y$, so that $-K_X\cdot C=2$ and $E_2\cdot C=0$;
\item $C=C'\cup\w{F}$, where $\w{F}$ is the transform of
 an integral fiber $F\subset X_k$ of $f_k$, $E_2\cdot
\w{F}>0$, and $\w{F}\not\subset E_2$.
\end{enumerate}

Let $\w{F}_i\subset X'_{i}$ and $\w{F}_{i+1}\subset X'_{i+1}$ be the
transforms of $F$. 
 We have $-K_{X'_{i}}\cdot\w{F}_{i}
\leq -K_{X'_{i+1}}\cdot\w{F}_{i+1}
\leq-K_{X_k}\cdot
F=2$ by \cite[Lemma~3.8]{31}, while $-K_{X}\cdot\w{F}=1$, therefore by
Lemma~\ref{SQM} (1) we have two possibilities: 
\begin{enumerate}[$(a)$]
\item  $-K_{X'_{i}}\cdot\w{F}_{i}=2$ and $\w{F}$ intersects a unique
  exceptional plane
$L\subseteq X\smallsetminus\dom(\ph)$;
\item 
$-K_{X'_{i}}\cdot\w{F}_{i}=1$ and $\ph$ is an isomorphism on
$\w{F}$.
\end{enumerate}

We assume first that we are in case $(a)$, and show that this gives a
contradiction. 
Since  $-K_{X'_i}\cdot \w{F}_i=2=-K_{X_k}\cdot
F$, by \cite[Lemma~3.8]{31} the birational map
$X'_{i}\dasharrow X_k$ is an isomorphism on $\w{F}_{i}$; recall that
the image of  $\w{F}_{i}$ in $X_k$ is an integral fiber $F$ of $f_k$. Thus
$\w{F}_i\cap\Exc(g_i)=\emptyset$
  and
$\w{F}_i$ is a proper fiber of the map $X'_i\dasharrow Y$.

On the other hand
 $\w{F}\cap E_2\neq\emptyset$ by (2), therefore $\w{F}$ intersects
$E_2$ along the indeterminacy locus of $\ph$, and we get
$\w{F}\cap E_2\subset L$.

In $X_i'$ we have $\ph(C_0)\equiv \w{F}_i$ (recall that $C_0$ is a
general fiber of $X\dasharrow 
Y$), hence $C_0\equiv \w{F}+C_L$, where 
$C_L\subset L$ is a line. But we also have
$C_0\equiv C=\w{F}+C'$,
so that $C'\equiv C_L$. 

This implies that $C'$ is contained in an exceptional plane too. 
Indeed by taking a
general very ample divisor $H\subset X'_i$, its transform $\w{H}\subset
X$ is a movable divisor whose base locus is $X\smallsetminus\dom(\ph)$, 
and $\w{H}\cdot C'=\w{H}\cdot
 C_L<0$. 

On the other hand we have $p\in C\cap E_2=C'\cup (\w{F}\cap E_2)$, so
$p$ must belong to some exceptional plane, which contradicts our
choice of $p$.

\medskip

Hence we are in case $(b)$. Using (2) we see that 
$\w{F}_{i}\cdot \Exc(g_{i})=\w{F}\cdot E_2>0$, and $\w{F}_{i}$ is not
contained in $\Exc(g_{i})$. Therefore:
$$\w{F}_{i+1}\cap g_i(\Exc(g_i))\neq\emptyset,\quad \w{F}_{i+1}\not\subseteq
g_i(\Exc(g_i)),\quad\text{and}\quad
-K_{X'_{i+1}}\cdot\w{F}_{i+1}\leq 2.$$ 
Then Cor.~\ref{monaco} yields  
that $g_i$ must be of type $(3,2)$, $\ph$
gives an isomorphism between $E_2$ and $\Exc(g_i)$, and $E_2$ does not
contain any exceptional plane.

This implies that
 $-K_{X'_{i+1}}\cdot\w{F}_{i+1}=2=-K_{X_k}\cdot
F$, and again by \cite[Lemma~3.8]{31} the birational map
$X'_{i+1}\dasharrow X_k$ is an isomorphism between $\w{F}_{i+1}$ and
$F\subset X_k$, so that
$\w{F}_{i+1}$ is a fiber of the map $X'_{i+1}\dasharrow Y$.

\medskip

Since $E_2$ does not contain exceptional planes, the choice of $p\in
E_2$ was arbitrary. We
deduce that
$g_i(\Exc(g_i))$ is contained in the
open subset where the map $X'_{i+1}\dasharrow Y$ is regular and proper. 

Finally $g_i$ cannot have fibers of dimension $2$, otherwise the
rational map $X'_i\dasharrow Y$ over an open subset yields a $K$-negative
local contraction of a smooth variety having a $2$-dimensional fiber with a
one-dimensional component, which is impossible, see \cite[Lemma~2.12]{AWaview}.

Therefore $g_i$ is of type $(3,2)^{sm}$, and $E_2$ is a smooth
$\pr^1$-bundle with fiber $F_2\subset E_2$ such that $E_2\cdot F_2=-1$
and $E_1\cap E_2=\emptyset$.
Now proceeding as in the first part of the proof
we show that we are in $(ii)$.
 \end{proof}
The proof of
Prop.~\ref{uno} is very similar to
that of Prop.~\ref{due}. 

\footnotesize
\addcontentsline{toc}{section}{References}
\providecommand{\bysame}{\leavevmode\hbox to3em{\hrulefill}\thinspace}
\providecommand{\MR}{\relax\ifhmode\unskip\space\fi MR }
\providecommand{\MRhref}[2]{%
  \href{http://www.ams.org/mathscinet-getitem?mr=#1}{#2}
}
\providecommand{\href}[2]{#2}

\bigskip

\noindent C.\ Casagrande\\
Dipartimento di Matematica, Universit\`a di Pavia \\
via Ferrata, 1 \\
 27100 Pavia - Italy 

\smallskip

\noindent\emph{Current address:}
\\ Dipartimento di Matematica, Universit\`a di Torino \\
via Carlo Alberto, 10 \\
 10123 Torino - Italy \\
cinzia.casagrande@unito.it

\begin{thebibliography}{BCHM10}

\bibitem[ADHL10]{coxbook}
I.~Arzhantsev, U.~Derenthal, J.~Hausen, and A.~Laface, \emph{Cox rings},
  preprint arxiv:1003.4229v2, 2010.

\bibitem[Ara10]{carolina}
C.~Araujo, \emph{The cone of pseudo-effective divisors of log varieties after
  {B}atyrev}, Math.\ Z.\ \textbf{264} (2010), 179--193.

\bibitem[AW97]{AWaview}
M.~Andreatta and J.~A. Wi{\'s}niewski, \emph{A view on contractions of higher
  dimensional varieties}, Algebraic Geometry - Santa Cruz 1995,
  Proc.~Symp.~Pure Math., vol.~62, 1997, pp.~153--183.

\bibitem[Bar10]{sammy}
S.~Barkowski, \emph{The cone of moving curves of a smooth {F}ano three- or
  fourfold}, Manuscripta Math.\ \textbf{111} (2010), 305--322.

\bibitem[Bat99]{bat2}
V.~V. Batyrev, \emph{On the classification of toric {F}ano 4-folds}, J.\ Math.\
  Sci.\ (New York) \textbf{94} (1999), 1021--1050.

\bibitem[BCHM10]{BCHM}
C.~Birkar, P.~Cascini, C.~D. Hacon, and
  J.~M{\parbox[b][\Mheight][t]{\cwidth}{c}}Kernan, \emph{Existence of minimal
  models for varieties of log general type}, J.\ Amer.\ Math.\ Soc.\
  \textbf{23} (2010), 405--468.

\bibitem[BDPP04]{BDPP}
S.~Boucksom, J.-P. Demailly, M.~Paun, and T.~Peternell, \emph{The
  pseudo-effective cone of a compact {K}{\"a}hler manifold and varieties of
  negative {K}odaira dimension}, preprint arxiv:math.AG/0405285, 2004.

\bibitem[Bel86]{beltrametti86}
M.~C. Beltrametti, \emph{Contractions of non numerically effective extremal
  rays in dimension $4$}, Proceedings of the Conference in Algebraic Geometry
  (Berlin, 1985), Teubner-Texte Math., vol.~92, 1986, pp.~24--37.

\bibitem[Bel87]{beltra}
\bysame, \emph{On d-folds whose canonical bundle is not numerically effective,
  according to {M}ori and {K}awamata}, Ann.\ Mat.\ Pura Appl.\ (4) \textbf{147}
  (1987), 151--172.

\bibitem[Cas08]{fanos}
C.~Casagrande, \emph{Quasi-elementary contractions of {F}ano manifolds},
  Compos.\ Math.\ \textbf{144} (2008), 1429--1460.

\bibitem[Cas09]{31}
\bysame, \emph{On {F}ano manifolds with a birational contraction sending a
  divisor to a curve}, Michigan Math.\ J.\ \textbf{58} (2009), 783--805.

\bibitem[Cas11]{codim}
\bysame, \emph{On the {P}icard number of divisors in {F}ano manifolds},
  preprint arxiv:0905.3239v4, 2011, to appear in
  Ann.~Sci.~{\'E}c.~Norm.~Sup{\'e}r.

\bibitem[CJR08]{priska}
C.~Casagrande, P.~Jahnke, and I.~Radloff, \emph{On the {P}icard number of
  almost {F}ano threefolds with pseudo-index {$>1$}}, Internat.\ J.\ Math.\
  \textbf{19} (2008), 173--191.

\bibitem[Con02]{conrads}
H.~Conrads, \emph{Weighted projective spaces and reflexive simplices},
  Manuscripta Math.\ \textbf{107} (2002), 215--227.

\bibitem[Fuj86]{Fujita86}
T.~Fujita, \emph{Projective varieties of {$\Delta$}-genus one}, Algebraic and
  Topological Theories (Kinosaki, 1984), 1986, pp.~149--175.

\bibitem[Fuj90]{fuji2}
\bysame, \emph{On singular {D}el {P}ezzo varieties}, Algebraic Geometry
  (L'Aquila, 1988), Lecture Notes in Math., vol. 1507, Springer-Verlag, 1990,
  pp.~117--128.

\bibitem[HK00]{hukeel}
Y.~Hu and S.~Keel, \emph{Mori dream spaces and {GIT}}, Michigan Math.\ J.\
  \textbf{48} (2000), 331--348.

\bibitem[IP99]{fanoEMS}
V.~A. Iskovskikh and Yu.~G. Prokhorov, \emph{Algebraic geometry {V} - {F}ano
  varieties}, Encyclopaedia Math.\ Sci.\, vol.~47, Springer-Verlag, 1999.

\bibitem[Kac97]{kachi}
Y.~Kachi, \emph{Extremal contractions from 4-dimensional manifolds to 3-folds},
  Ann.\ Scuola Norm.\ Sup.\ Pisa Cl.\ Sci.\ (4) \textbf{24} (1997), 63--131.

\bibitem[Kaw89]{kawsmall}
Y.~Kawamata, \emph{Small contractions of four dimensional algebraic manifolds},
  Math.\ Ann.\ \textbf{284} (1989), 595--600.

\bibitem[KM98]{kollarmori}
J.~Koll{\'a}r and S.~Mori, \emph{Birational geometry of algebraic varieties},
  Cambridge Tracts in Mathematics, vol. 134, Cambridge University Press, 1998.

\bibitem[KMM87]{KMM}
Y.~Kawamata, K.~Matsuda, and K.~Matsuki, \emph{Introduction to the minimal
  model problem}, Algebraic Geometry, Sendai, 1985, Adv.\ Stud.\ Pure Math.\,
  vol.~10, 1987, pp.~283--360.

\bibitem[Kol86]{kollarhigher}
J.~Koll{\'a}r, \emph{Higher direct images of dualizing sheaves {I}}, Ann.\ of
  Math.\ \textbf{123} (1986), 11--42.

\bibitem[Mel99]{mella}
M.~Mella, \emph{On del {P}ezzo fibrations}, Ann.\ Scuola Norm.\ Sup.\ Pisa Cl.\
  Sci.\ (4) \textbf{28} (1999), 615--639.

\bibitem[Nam97]{namikawa}
Y.~Namikawa, \emph{Smoothing {F}ano 3-folds}, J.\ Algebraic Geom.\ \textbf{6}
  (1997), 307--324.

\bibitem[Pro05]{prok}
Yu.~G. Prokhorov, \emph{On the degree of {F}ano threefolds with canonical
  {G}orenstein singularities}, Mat.\ Sb.\ \textbf{196} (2005), 81--122,
  Russian. English translation: Sb.\ Math.\ {\bf 196} (2005), 77--114.

\bibitem[Tak99]{takagi}
H.~Takagi, \emph{Classification of extremal contractions from smooth fourfolds
  of (3,1)-type}, Proc.\ Amer.\ Math.\ Soc.\ \textbf{127} (1999), 315--321.

\end{thebibliography}
\end{document}